\documentclass[11pt, a4paper, reqno]{amsart}

\usepackage[colorinlistoftodos]{todonotes}

\usepackage[utf8]{inputenc}
\usepackage{lmodern}
\usepackage{amssymb,amsfonts,amsthm,amsmath,bbm,mathrsfs,aliascnt,mathtools,dsfont}
\usepackage[english]{babel}
\usepackage[colorlinks]{hyperref}

\usepackage{ulem}
\usepackage{nicefrac}

\usepackage{tikz} 
\usetikzlibrary{arrows.meta,decorations.markings}

\usepackage{tcolorbox} 

\usepackage{subcaption}

\usepackage{graphicx} %
\usepackage{sidecap, caption}
\usepackage{wrapfig}

\usepackage{multicol}
\usepackage{lipsum}
\usepackage{array}

\usepackage{enumitem}  

\newcommand{\W}{\mathbf{W}}
\newcommand{\D}{\mathcal{D}}

\newcommand{\conf}{\text{Conf}}
\newcommand{\pr}{\text{pr}}
\newcommand{\ball}{\text{B}}
\newcommand{\supp}{\text{supp}}

\renewcommand{\P}{\mathbb{P}}
\newcommand{\e}{\mathbf{e}}

\newcommand{\off}{\text{off}}
\newcommand{\on}{\text{on}}

\newcommand{\kk}{\mathbf{k}}

\newcommand{\id}{o}

\DeclareMathOperator{\len}{\mathrm{length}}
\theoremstyle{plain}

\newtheorem{thm}{Theorem}[section]
\newtheorem{cor}[thm]{Corollary}
\newtheorem{lemma}[thm]{Lemma}
\newtheorem{prop}[thm]{Proposition}
\newtheorem{rem}[thm]{Remark}

\newtheorem{remarks}[thm]{Remarks}

\theoremstyle{definition}
\newtheorem{definition}[thm]{Definition}

\newtheorem{ex}[thm]{Example}

\newcommand{\N}{\mathbb{N}}
\newcommand{\E}{\mathbb{E}}
\newcommand{\calI}{\mathcal{I}}

\theoremstyle{remark}
\newtheorem{remark}[thm]{Remark}

\begin{document}

\title[Cone-Additive Functions for Random Walks on Free Products]{Cone-Additive Functions for Random Walks on Free Products of Graphs}

\author[L. Gilch]{Lorenz A. Gilch}
\address{Lorenz A. Gilch,
University of Passau,
Faculty of Computer Science and Mathematics,
94030 Passau,
Germany
}
\email{Lorenz.Gilch@uni-passau.de}

\author[H. Oppelmayer]{Hanna Oppelmayer} %\orcid{0000-0002-1998-5407} 
\address{Hanna Oppelmayer,
Universität Innsbruck,
	Department of Mathematics.
	6020 Innsbruck, Austria.
}
\email{hanna.oppelmayer@uibk.ac.at}

\thanks{
This research was
funded in whole or in part by the \textbf{Austrian Science Fund (FWF)
10.55776/ESP4189024}.
}

%\date\today

\begin{abstract}  
We define cone-additive functions for random walks on free products of countable graphs. These functions satisfy a limit theorem under mild assumptions. In fact, cone-additivity is present in several well-studied notions, like entropy, asymptotic range and drift. Cone-additivity can be seen as a separation property by space -- a quite different perspective than the well-studied concept of sub-additivity in the context of free products of groups, which is a separation by time. In our inhomogeneous setting of free products of graphs, this separation by space allows us to deduce new limit theorems for travelling salesman problems (that is, distance functions of lamplighter random walks on free products), for weight functions on edges and the range of the $r$-th visit.
\end{abstract}

\maketitle

\tableofcontents

\section{Introduction}
\label{sec:introduction}

Let $V_1$ and $V_2$ be finite or countable, disjoint sets with $|V_i|\ge 2$, 
$i\in\{1,2\}$, and distinguished vertices $o_i\in V_i$. 
Their \textit{free product} $V = V_1 * V_2$ consists of all finite words
\[
x_1 x_2 \dots x_n
\]
formed from the alphabet 
$(V_1\setminus\{o_1\}) \cup (V_2\setminus\{o_2\})$, 
under the constraint that two consecutive letters $x_j,x_{j+1}$ never belong to the same factor 
$V_i\setminus\{o_i\}$. The empty word is denoted by $o$.
Furthermore, let $P_1$ and $P_2$ be transition matrices on $V_1$ and $V_2$, respectively. 
We consider a time-homogeneous, transient Markov chain $(X_n)_{n\in\mathbb N_0}$ on $V$ 
starting at $X_0=o$, whose transition matrix is obtained as a convex combination 
of suitably lifted versions of $P_1$ and $P_2$ (see Section~\ref{subsec:random-walks} 
for details). 
\par
For better visualisation, we may associate directed graphs 
$\mathcal X_1$, $\mathcal X_2$, and $\mathcal X$ with vertex sets 
$V_1$, $V_2$, and $V$, respectively, where there is an oriented edge from a vertex $x$ to a vertex $y$ 
if and only if the corresponding single-step transition probability is positive. On the graph $\mathcal{X}$ we define, for $x\in V$, the cone $C(x)$ rooted at $x$ in a natural way, which consists of all words/vertices in $V$ having prefix $x$. As we will see, the random walk converges to some infinite word, where a nested sequence of cones will be finally entered without further exits.
\par
In this article, we extract a property that many well-studied concepts, such as \textit{entropy}, \textit{drift}, \textit{range} and \textit{weight functions on edges}, share. 
Let $\Pi$ denote the set of finite paths in $\mathcal{X}$. Furthermore, let $f:\Pi\to\mathbb{R}$ be a function which is adapted to the cone structure, that is, if the random path $[X_0,\ldots,X_n]$ until time $n\in\N$ enters finally the (random) cones $C(y_1),\ldots,C(y_k)$, where $C(y_1)\supset C(y_2)\supset\ldots \supset C(y_k)$ and $y_j$ has word length $j$, then  $f$ can be evaluated piecewise along this nested sequence of cones which are finally entered. That is,
$$
f\bigl([X_0,\ldots,X_n]\bigr)= \sum_{j=1}^{k} f(\pi_j) + f(\pi^\ast_{n}) \quad \textrm{ almost surely,}
$$ 
where $\pi_1,\ldots,\pi_k,\pi_n^\ast$ are certain random path pieces, defined in Section~\ref{subsec: paths}, such that the vertices of $\pi_j$ are elements of $C(y_{j-1})\setminus \bigl( C(y_j)\setminus \{y_j\}\bigr)$.
Moreover, we assume that the value $f(\pi_j)$, $j\in\N$, remains unchanged when we cancel the common prefix $y_{j-1}$ out of each vertex in $\pi_j$ leading to a ``shifted'' version $y_{j-1}^{-1}\pi$ of $\pi$, that is,
$$
f(\pi_j)=f\bigl(y_{j-1}^{-1}\pi\bigr).
$$
We call this class of functions \textit{cone-additive}; see Definition \ref{def: cone add}.

The focus of this article lies on the study of the asymptotic behaviour of 
\[
\frac1n f\bigl([X_0,  \dots, X_n]\bigr)\quad \textrm{ as $n\to\infty$.}
\] 
Our main result is that cone-additivity gives rise to a limit theorem under a mild assumption.
We state here a simplified version using the growth. The general result is Theorem~\ref{thm:limit-theorem}.
\begin{thm}\label{thm: intro limit-theorem} 
Let $f:\Pi \to \mathbb{R}$ be a cone-additive function such that $|f(\pi)|$ grows at most polynomially in the length of $\pi\in \Pi$. Furthermore, assume that the power series $G(o,o|z)=\sum_{n\geq 0}\P[X_n=o]\cdot z^n$ has radius of convergence strictly bigger than $1$.
Then there exists some constant $\mathfrak{c}\in\mathbb{R}$ such that
$$
\lim_{n\to\infty} \frac1n f\bigl([X_0,\ldots,X_n]\bigr) = %\frac{\mathbb{E}_\varrho\bigl[f(\pi_1)]}{\mathbb{E}[\e_2-\e_1]}=\ell\cdot  %\mathbb{E}_\varrho\bigl[f(\pi_1)] =: 
\mathfrak{c}\quad \textrm{almost surely}.
$$    
Moreover, if $f$ is strictly positive  then $\mathfrak{c}>0$. %, see Corollary \ref{lem:c-not-zero}.
\end{thm}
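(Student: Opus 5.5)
We write the random path as a concatenation of pieces separated by exit times. Then a renewal/ergodic argument gives the limit of $\frac1n\sum_j f(\pi_j)$, and the leftover piece $f(\pi_n^\ast)$ is shown to be negligible.

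\textbf{Exit times and the i.i.d. structure.} For $k\in\N$ let $\e_k$ be the first time after which $X_n$ never leaves the cone $C(y_k)$; here $y_k$ is the length-$k$ prefix of the limiting infinite word $X_\infty$. Transience and the tree-like structure of the free product give $\e_k<\infty$ almost surely and $\e_k\to\infty$. The piece $\pi_k$ is the path segment between $\e_{k-1}$ and $\e_k$. The key structural fact is that the shifted pieces $y_{k-1}^{-1}\pi_k$ form a Markov chain driven by a finite "type" variable: the type of the last letter of $y_{k-1}$, i.e.\ $\tau_k\in\{1,2\}$. Given the type, the shifted piece is independent of the past. I would verify this through the strong Markov property at the exit times: after $\e_{k-1}$ the walk never leaves $C(y_{k-1})$, and the law of the walk conditioned on that event depends only on $\tau_{k-1}$. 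The set of types is finite and the type chain is irreducible, so the pairs $(\tau_k,\,y_{k-1}^{-1}\pi_k)$ form a positive recurrent Markov chain with stationary law $\varrho$. By shift invariance, $f(\pi_k)=f(y_{k-1}^{-1}\pi_k)$ is then a function of this chain.

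\textbf{Moments and the ergodic theorem.} The assumption $R(G)>1$ on $G(o,o|z)$ implies exponential tails for the increments $\e_k-\e_{k-1}$. This is standard in the free product setting: the generating function of the increments is a rational expression in Green functions, and these converge beyond $z=1$. Since $|f(\pi)|\le C(1+\len(\pi))^d$, we get $\E_\varrho|f(\pi_1)|<\infty$ and $\E[\e_2-\e_1]<\infty$. The ergodic theorem for the Markov chain then gives
\[
\frac1k\sum_{j=1}^k f(\pi_j)\to \E_\varrho[f(\pi_1)]
\quad\text{and}\quad
\frac{\e_k}{k}\to \E_\varrho[\e_2-\e_1]
\]
almost surely. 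For $n$ given, define $k(n)=\max\{k:\e_k\le n\}$; then $k(n)/n\to 1/\E_\varrho[\e_2-\e_1]=:\ell$ almost surely. The candidate limit is $\mathfrak c=\ell\,\E_\varrho[f(\pi_1)]$.

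\textbf{The remainder (main obstacle).} We must show $f(\pi_n^\ast)/n\to0$, and we must control the mismatch between the pieces actually completed by time $n$ and the index $k(n)$. The difficulty is that $\e_k$ is not a stopping time: it depends on the future. The representation $f([X_0,\ldots,X_n])=\sum_{j\le k}f(\pi_j)+f(\pi_n^\ast)$ involves cones "finally entered", which are determined only from the whole trajectory. My plan is to bound $|f(\pi_n^\ast)|\le C(1+\len(\pi_n^\ast))^d$ and to note that $\len(\pi_n^\ast)\le \e_{k(n)+1}-\e_{k(n)}+(\text{maximal backtracking})$. By the exponential tails and Borel–Cantelli, $\max_{j\le n}(\e_{j+1}-\e_j)=O(\log n)$ almost surely. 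The polynomial growth then gives $|f(\pi_n^\ast)|=O((\log n)^d)=o(n)$; this is exactly where the growth hypothesis is used. Extra care is needed for excursions at time $n$ that lie beyond the finally entered cones. These should also have length $O(\log n)$ almost surely, via exponential tails of excursion lengths, which again follow from $R(G)>1$.

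\textbf{Positivity.} If $f>0$, then $f(\pi_1)>0$ almost surely. Hence $\E_\varrho[f(\pi_1)]>0$, and since $\ell>0$ we get $\mathfrak c>0$.
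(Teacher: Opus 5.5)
There is a genuine gap, and it comes from misidentifying the pieces. In the paper, $\pi_k$ is \emph{not} the time segment $[X_{\e_{k-1}},\ldots,X_{\e_k}]$. It is the restriction of the whole trajectory $[X_0,\ldots,X_{\e_k}]$ to the sphere $\mathcal{S}_k$, so it also contains the excursions into $\mathcal{S}_k$ made \emph{before} time $\e_{k-1}$, while the walk was still entering and leaving $C(X_{\e_{k-1}})$. In the paper's example $\e_1=9$ and $\e_2=10$, yet $\pi_2=[a,ab,a,ab]$. Cone-additivity refers to these sphere pieces; this is what makes, for instance, the range cone-additive. The range is not additive over your time segments, so as written you prove the limit theorem for a different class of functions. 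For the actual pieces, your key claim fails: $X_{\e_{k-1}}^{-1}\pi_k$ is not independent of the past given the type $\tau_k$. The piece $\pi_k$ has a nontrivial pre-$\e_{k-1}$ part only if the walk visited $X_{\e_{k-1}}$ before time $\e_{k-1}$, i.e.\ only if $X_{\e_{k-1}}$ occurs more than once in $\pi_{k-1}$. The size of that part then depends on how often and how deep those early excursions went. Only quantities determined by the walk after $\e_{k-1}$, such as $(\W_k,\e_k-\e_{k-1})$, have the type-driven structure you describe. Even for those, the strong Markov property is unavailable because $\e_k$ is not a stopping time. One instead decomposes over the events $\{X_{n-1}\notin C(x),\,X_n=x\}$, multiplied by the probability $1-\xi_i$ of never leaving the cone, as the paper does.

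The missing idea is to enlarge the state so that it carries this history. The paper uses $\psi_k=X_{\e_{k-1}}^{-1}\bigl([X_0,\ldots,X_{\e_k}]|_{C(X_{\e_{k-1}})}\bigr)$, so that $X_{\e_{k-1}}^{-1}\pi_k=\psi_k|_{\neg C(\W_k)}$. It shows that $\bigl((\W_k,\psi_k)\bigr)_{k\in\N}$ is a homogeneous Markov chain whose transitions depend on $\psi_k$ itself, through its part inside $C(\W_k)$, which is inherited by $\psi_{k+1}$. This chain lives on a countably infinite state space, so positive recurrence no longer follows from finiteness of a type set. The paper derives it from three ingredients: the uniform bound $\sup_k\E\bigl[\mathcal{R}^{\len(\psi_k)}\bigr]<\infty$, which gives tightness of $\len(\psi_k)$; a minorization showing that from every state with $\len(\psi)\le M+1$ the chain reaches $(g,[o,g])$ in $M$ steps with probability at least $\epsilon$; and Feller's criterion. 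Integrability of $f(\psi_1|_{\neg C(\W_1)})$ under the stationary law $\varrho$ then needs a separate truncation argument, since $\varrho$ is not the law of any $\psi_k$. The same issue affects your remainder: $\pi_n^\ast$ also contains early excursions into $C(X_{\e_{\kk(n)}})$, so the correct control is $\len(\pi_n^\ast)\le\len(\psi_{\kk(n)+1})$ rather than an increment of $\e$. Once you have the uniform exponential moment of $\len(\psi_k)$, your Borel--Cantelli argument is a valid alternative to the paper's ergodic bound $R^{\len(\psi_k)}/k\to0$: it gives $\len(\psi_k)=O(\log k)$ almost surely, hence $|f(\pi_n^\ast)|=O((\log n)^d)$.
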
 
In Corollary~\ref{cor:c-formula} we provide an explicit description of the constant $\mathfrak{c}$.
Furthermore, we will demonstrate the power of our main result in many interesting examples, which form completely new results by themselves; see Section \ref{sec:applications}. This includes a generalised version of the asymptotic range for random walks on free products where we count the number of edges or vertices which are visited by the random walk exactly $r\in\N$ times; see Theorems \ref{thm: range} and \ref{thm:r-range}.
 Moreover, we will show that the rate of escape w.r.t. a distance function arising from weights/distances on its edges, exists; see Theorems \ref{thm: weights} and \ref{thm:weight-distance}. Furthermore, we study lamplighter random walks and show the existence of the rate of escape; see Theorem \ref{thm:LL}. 
\par
Random walks on free products have been investigated extensively in a large variety. We briefly summarise some of the main results. The asymptotic behaviour of return probabilities for random walks on free products has been analysed by several authors, amongst others Gerl and Woess~\cite{GerlWoess}, Woess~\cite{woess:86}, Sawyer~\cite{Sawyer}, Cartwright and Soardi~\cite{CartwrightSoardi}, Lalley~\cite{Lalley1,Lalley2}, and Candellero and Gilch~\cite{CandelleroG}. For free products of finite groups, Mairesse and Math\'eus~\cite{MairesseMatheus} derived explicit formulas for both the drift and the asymptotic entropy. In Gilch \cite{gilch:07,gilch:11}, alternative formulas for the drift as well as for the entropy of random walks on free products of graphs were obtained, and in Gilch \cite{gilch:22,gilch:24} the existence of the asymptotic range and the capacity of the range were proven. Moreover, Shi et al.~\cite{ShiEtAl} studied the spectral radius for random walks on certain classes of free products of graphs. 
\par
The significance of free products is closely related to Stallings' Splitting Theorem (see Stallings~\cite{Stallings}), which states that a finitely generated group $\Gamma$ has more than one (geometric) end if and only if $\Gamma$ admits a non-trivial decomposition as a free product with amalgamation or as an HNN extension over a finite subgroup. Both types of groups are treated in detail, for example, in Lyndon and Schupp~\cite{LyndonSchupp}. 
Recall that a free product is a special case of an amalgamated product over the trivial subgroup.
\par
Whereas most of the aforementioned works focus on random walks on free products of groups, which exhibit a highly homogeneous structure, the present article considers more general free products of graphs, characterised by a substantially less homogeneous structure, which requires additional techniques and ideas since the main tools from the group setting (e.g., applications of Kingman's Subadditive Ergodic Theorem) do not work any more. Thus, the present article extends the group-theoretic framework and leads to new results for non-group-invariant random walks.
\par
Our proofs rely heavily on generating function techniques and 
a detailed analysis of their interaction across the free product structure. These techniques for rewriting probability generating functions on the free product in terms
of functions on the single factors of the free product were introduced independently and
simultaneously by \cite{CartwrightSoardi}, \cite{woess:86}, Voiculescu \cite{voiculescu}, and McLaughlin \cite{McLaughlin}. 
\par 
The outline of the paper is as follows: in Section \ref{sec:free-products} we give an introduction to free products of graphs and equip them with a natural class of random walks. In particular, we state our Main Theorem  \ref{thm:limit-theorem} at the end of this section, which we will prove in  Section \ref{sec:proof-of-LLT}. 
In Section \ref{sec:applications}, we will demonstrate the power of our main theorem in various applications, leading to further completely new results.

\section{Free Products and Random Walks}
\label{sec:free-products}

\subsection{Free Products}\label{subsec:free products}

Suppose we are given finite or countable sets $V_1$ and $V_2$   with $|V_i|\geq 2$ for every $i\in\calI:=\{1,2\}$. W.l.o.g., we assume that $V_1$ and $V_2$ are disjoint; otherwise, we just rename the elements of $V_1$ and $V_2$. Since we are interested in transient random walks only, we exclude the case $|V_1|=|V_2|=2$, which will lead to recurrent random walks in our setting (see at the end of Subsection \ref{subsec:random-walks}). For each $i\in\calI$, we 
choose a distinguished element $o_i$ of $V_i$, which we call  \textit{root} of $V_i$. We set $V_i^\times:=V_i\setminus\{o_i\}$.
\par
Furthermore, suppose we are given a transition matrix $P_i=\bigl(p_i(x,y)\bigr)_{x,y\in V_i}$ on each $V_i$, $i\in\calI$, which gives rise to a (time-)homogeneous random walk on $V_i$.
For  $x, y\in V_i$, the associated $n$-step transition probabilities are denoted by $p_i^{(n)}(x, y)$. As we will see, only those elements  $x\in V_i$ will be of interest for us, which can be reached from $o_i$ with positive probability. Therefore, we may assume w.l.o.g. that, for every $i\in\calI$ and every $x\in V_i$, there exists some $n_x\in\N$ such that $p_i^{(n_x)}(o_i,x)>0$. Moreover, for the sake of simplicity and better readability of our proofs, we assume $p_i(x, x)= 0$ for every $i\in\calI$ and all $x\in V_i$; this assumption can be dropped without any restriction; see \cite[Section 6]{gilch:22}.
\par
For better visualization, we may equip $V_i$ with a graph structure w.r.t. $P_i$: we think of rooted graphs $\mathcal{X}_i$
with vertex sets $V_i$ and roots $o_i$ such that there is an oriented edge from $x\in V_i$ to $y\in V_i$ if and only if $p_i(x, y) > 0$.
\par
%For $i\in\calI$, set $V_i^\times:= V_i\setminus\{o_i\}$ and $V^\times_\ast := V_1^\times \cup V_2^\times$. 
The \textit{free product of $V_1$ and $V_2$} is given by the set
\begin{equation}\label{equ:free-product}
V:=V_1\ast V_2 := \left\lbrace v_1v_2\dots v_n \,\Biggl|\, \begin{array}{c} n\in\N, v_j\in V_1^\times \cup V_2^\times, \\
v_j\in V_k^\times \Rightarrow v_{j+1}\notin V_k^\times
\end{array}\right\rbrace \cup\{o\},
\end{equation}
the set of all finite words over the alphabet $V^\times_\ast:=V_1^\times \cup V_2^\times$ such that no two consecutive letters come from the same $V_i^\times$,
where $o$ describes the empty word. Observe that $V_i^\times\subset V$, and we may consider $o_i$  as the ``empty word'' of $V_i$ and also identify it with $o$. Throughout this article, we will use the representation of elements in $V$ as in (\ref{equ:free-product}).
\par
The \textit{type} $\delta(u)$ of $u=u_1\ldots u_m\in V\setminus\{o\}$ is defined to be $i\in\calI$ if $u_m\in V_i^\times$. %; we set $\delta(o) := 0$.
We have  a natural partial composition law for elements in $V$: if $u=u_1\dots  u_m, v = v_1\dots v_n \in V\setminus \{o\}$ with $\delta(u_m)\neq \delta(v_1)$, then $uv=u_1\ldots u_m v_1\ldots v_n\in V$ stands for the concatenation of the words $u$ and $v$, which is well-defined. Moreover, we set $uo_i := u$ for every $i\in\calI$ and $o_iu := u$ since $o_i$ is interpreted as the empty word in $V_i$ and $V$. %; in particular, $o_i$ is also identified with the empty word $o$. 
Since concatenation of words in $V$ is only partially defined, concatenation is \textit{not} a group operation on $V$; in particular, standard arguments from the group theory setting like Kingman's Subadditive Ergodic Theorem  can \textit{not} be applied directly; other approaches are necessary in order to derive the proposed results.  
\par
The set $V$ can also be equipped with a graph structure $\mathcal{X}$ which is constructed inductively as follows: take copies of $\mathcal{X}_1$ and $\mathcal{X}_2$ and glue them together at their roots $o_1$ and $o_2$ to one single common root, which becomes the empty word $o$; inductively, at each in the previous step newly-added vertex \mbox{$v=v_1\dots v_k\in V\setminus\{o\}$} with $v_k\in V_i$, attach a copy of $\mathcal{X}_j$, $j\in\calI \setminus\{i\}$, where $v$ is identified with $o_j$ from the new copy of $\mathcal{X}_j$, and the vertices $v_{k+1}\in V_j\setminus\{o_j\}$ from the new copy are now identified with $v_1\ldots v_k v_{k+1}$ within the free product. Then $\mathcal{X}$ is the \textit{free product of the graphs $\mathcal{X}_1$ and $\mathcal{X}_2$}; see e.g. \cite[Section 9.C]{woess}.

\begin{ex}\label{example:free-product}
Consider the sets $V_1=\{o_1,a\}$ and $V_2=\{o_2,b,c\}$ equipped with  the following graph structure:
\begin{center}
\begin{tikzpicture}[scale=1]

\coordinate[label=left:$\mathcal{X}_1:$] (f) at (-0.5,0);
\coordinate[label=above:$o_1$] (e) at (0,0);
\coordinate[label=above:$a$] (a) at (2,0);

\coordinate[label=left:$\mathcal{X}_2:$] (g) at (4.5,0);
\coordinate[label=above:$o_2$] (e2) at (5,0);
\coordinate[label=above:$b$] (b) at (7,1);
\coordinate[label=below:$c$] (c) at (7,-1);

\fill[red] (e) circle (2pt);
\fill[red] (a) circle (2pt);
\fill[red] (b) circle (2pt);
\fill[red] (c) circle (2pt);
\fill[red] (e2) circle (2pt);
\draw[{Latex[length=3mm]}-{Latex[length=3mm]},very thick] (e) -- (a);
\draw[-{Latex[length=3mm]},very thick,blue] (e2) -- (b);
\draw[-{Latex[length=3mm]},very thick,purple] (c) -- (e2);
\draw[{Latex[length=3mm]}-{Latex[length=3mm]},very thick,teal] (b) -- (c);

\end{tikzpicture}
\end{center}
The graph $\mathcal{X}$ of the free product $V_1\ast V_2$ has then the following structure:
\begin{center}
\begin{tikzpicture}[scale=1.2]
\coordinate[label=above:$o$] (e) at (0,0);
\coordinate[label=above:$a$] (a) at (2,0);
\coordinate[label=170:$ab$] (ab) at (3,1);
\coordinate[label=10:$ac$] (ab2) at (3,-1);
\coordinate[label=170:$aba$] (aba) at (4,2);
\coordinate[label=left:$abab$] (abab) at (4,3);
\coordinate[label=below:$abac$] (abab2) at (5,2);
\coordinate[label=left:$aca$] (ab2a) at (4,-2);
\coordinate[label=right:$acab$] (ab2ab) at (4,-3);
\coordinate[label=above:$acac$] (ab2ab2) at (5,-2);

\coordinate[label=above:$b$] (b) at (-1,1);
\coordinate[label=below:$c$] (b2) at (-1,-1);

\coordinate[label=below:$ba$] (ba) at (-2,1);
\coordinate[label=below:$bab$] (bab) at (-3,1);
\coordinate[label=right:$bac$] (bab2) at (-2,2);

\coordinate[label=above:$ca$] (b2a) at (-2,-1);
\coordinate[label=above:$cab$] (b2ab) at (-3,-1);
\coordinate[label=right:$cac$] (b2ab2) at (-2,-2);

\draw[{Latex[length=3mm]}-{Latex[length=3mm]},very thick] (e) -- (a);
\draw[-{Latex[length=3mm]},very thick,blue] (e) -- (b);
\draw[{Latex[length=3mm]}-{Latex[length=3mm]},very thick,teal] (b) -- (b2);
\draw[-{Latex[length=3mm]},very thick,purple] (b2) -- (e);
\draw[-{Latex[length=3mm]},very thick,blue] (a) -- (ab);
\draw[{Latex[length=3mm]}-{Latex[length=3mm]},very thick,teal] (ab) -- (ab2);
\draw[-{Latex[length=3mm]},very thick,purple] (ab2) -- (a);
\draw[{Latex[length=3mm]}-{Latex[length=3mm]},very thick] (ab) -- (aba);
\draw[-{Latex[length=3mm]},very thick,blue] (aba) -- (abab);
\draw[{Latex[length=3mm]}-{Latex[length=3mm]},very thick,teal] (abab) -- (abab2);
\draw[-{Latex[length=3mm]},very thick,purple] (abab2) -- (aba);
\draw[{Latex[length=3mm]}-{Latex[length=3mm]},very thick] (abab) -- (4,4);
\draw[{Latex[length=3mm]}-{Latex[length=3mm]},very thick] (abab2) -- (6,2);
\draw[{Latex[length=3mm]}-{Latex[length=3mm]},very thick] (ab2) -- (ab2a);
\draw[-{Latex[length=3mm]},very thick,blue] (ab2a) -- (ab2ab);
\draw[{Latex[length=3mm]}-{Latex[length=3mm]},very thick,teal] (ab2ab) -- (ab2ab2);
\draw[-{Latex[length=3mm]},very thick,purple] (ab2ab2) -- (ab2a);
\draw[{Latex[length=3mm]}-{Latex[length=3mm]},very thick] (ab2ab2) -- (6,-2);
\draw[{Latex[length=3mm]}-{Latex[length=3mm]},very thick] (ab2ab) -- (4,-4);
\draw[{Latex[length=3mm]}-{Latex[length=3mm]},very thick] (b) -- (ba);
\draw[-{Latex[length=3mm]},very thick,blue] (ba) -- (bab);
\draw[{Latex[length=3mm]}-{Latex[length=3mm]},very thick,teal] (bab) -- (bab2);
\draw[-{Latex[length=3mm]},very thick,purple] (bab2) -- (ba);
\draw[{Latex[length=3mm]}-{Latex[length=3mm]},very thick] (bab2) --+ (0,1);
\draw[{Latex[length=3mm]}-{Latex[length=3mm]},very thick] (bab) --+ (-1,0);

\draw[{Latex[length=3mm]}-{Latex[length=3mm]},very thick] (b2) -- (b2a);
\draw[-{Latex[length=3mm]},very thick,blue] (b2a) -- (b2ab);
\draw[{Latex[length=3mm]}-{Latex[length=3mm]},very thick,teal] (b2ab) -- (b2ab2);
\draw[-{Latex[length=3mm]},very thick,purple] (b2ab2) -- (b2a);
\draw[{Latex[length=3mm]}-{Latex[length=3mm]},very thick] (b2ab2) --+ (0,-1);
\draw[{Latex[length=3mm]}-{Latex[length=3mm]},very thick] (b2ab) --+ (-1,0);

\draw[dashed] (3,-4) -- (3,-1) -- (6.3,-1.5);
\node at (4.3,-1.5) {$C(ac)$};

\fill[red] (e) circle (2pt);
\fill[red] (a) circle (2pt);
\fill[red] (b) circle (2pt);
\fill[red] (b2) circle (2pt);
\fill[red] (ab) circle (2pt);
\fill[red] (ab2) circle (2pt);
\fill[red] (aba) circle (2pt);
\fill[red] (abab) circle (2pt);
\fill[red] (abab2) circle (2pt);
\fill[red] (ab2a) circle (2pt);
\fill[red] (ab2ab) circle (2pt);
\fill[red] (ab2ab2) circle (2pt);
\fill[red] (ba) circle (2pt);
\fill[red] (bab) circle (2pt);
\fill[red] (bab2) circle (2pt);
\fill[red] (b2a) circle (2pt);
\fill[red] (b2ab) circle (2pt);
\fill[red] (b2ab2) circle (2pt);

\end{tikzpicture}
\end{center}
\end{ex}

The underlying graph structure of free products $V$ allows us to define paths on $V$.
A \textit{path} of length $n\in\N_0$ in $\mathcal{X}$ (or in $V$)  is  a sequence of vertices $[v_0,v_1,\dots,v_n]$ in $V$ such that there is an oriented edge in $\mathcal{X}$ from $v_{i-1}$ to $v_i$ for every $i\in\{1,\dots,n\}$. We remind that, for each $x\in V$, there is a path from $o$ to $x$ by construction of $\mathcal{X}$ and due to the assumption made at the beginning of this section. We denote by $\Pi$  the set of all \textit{finite paths} in $V$, while $\Pi_\infty$ consists of all \textit{infinite} paths $[v_0,v_1,\ldots]$ with $v_0,v_1,\ldots \in V$ such that there is an oriented edge between $v_i$ and $v_{i+1}$ for all $i\in\N_0$.
The \textit{word length} of a word $u = u_1\dots u_m\in V\setminus\{o\}$  is defined as $\Vert u\Vert:= m$. Additionally, we set $\Vert o\Vert := 0$. We denote by $\len(\pi)$  the path length of  $\pi\in \Pi$.
\par
The tree-like graph structure of free products motivates the following crucial definition: the \textit{cone} rooted at $x\in V\setminus \{o\}$ is given by the set
$$
C(x):=\bigl\lbrace y\in V \,\bigl|\, y \textrm{ has prefix } x\bigr\rbrace.
$$
In other words, $C(x)$ consists of all elements $y\in V$ such that each path from $o$ to $y$ has to pass through $x$; compare with Example \ref{example:free-product}, where the cone  $C(ac)$ contains  all elements $ac, aca, acab, acac,\ldots$ in between the dashed lines. Moreover, we set $C(o):=V$.
\par
Let be $v=v_1\ldots v_m\in V\setminus\{o\}$ and consider a path
$\pi=[u_0,u_1,\ldots,u_n]$ inside $C(v)$, that is, $u_i\in C(v)$ for each $i\in\{0,\ldots,n\}$. Then, for  $w\in V\setminus\{o\}$ with $\delta(w)\neq \delta(v_1)$, we denote by $w\pi$ the path arising from $\pi$ by shifting each element $u_i$ by $w$, that is,
$$
w\pi:=[wu_0,wu_1,\ldots,wu_n].
$$
Vice versa, if we write $u_i=vw_i$ then 
$$
v^{-1}\pi :=[w_0,w_1,\ldots,w_n]
$$
denotes the path  arising from $\pi$ by cancelling the common prefix $v$ in each vertex $u_i$. It is easy to check that both $w\pi$ and $v^{-1}\pi$ are again well-defined paths; alternatively, Corollary \ref{cor:shifted-paths} will give a proof after random walks on $V$ have been introduced. 
\par
We now want to define the restriction of paths onto cones. For this purpose, consider in the following a path $\pi=[u_0,u_1,\ldots,u_m]\in \Pi$ and $v\in V\setminus \{o\}$ such that $\{u_0,\ldots,u_m\}\cap C(v)\neq \emptyset$. Then we define the restriction of $\pi$ onto $C(v)$ by
$$
\pi\bigl|_{C(v)}:=[x_0,x_1,\ldots,x_\kappa],
$$
where
$$
x_0 := u_{i_0} \quad \textrm{ with } i_0:=\inf\bigl\{\ell\in\{0,\ldots,m\} \,\bigl|\, u_\ell \in C(v) \bigr\}
$$
and for $j\geq 1$: 
$$
x_j := u_{i_j} \textrm{ with } i_j:=\inf\bigl\{\ell\in\{1,\ldots,m\} \,\bigl|\, \ell > i_{j-1}, u_\ell \in C(v)\setminus\{x_{\ell-1}\} \bigr\}.
$$
That is, $\pi\bigl|_{C(v)}$ arises from $\pi$ by removing  all subpaths which are located \textit{outside} of $C(v)$, where successive occurrences of $v$ are eliminated. The latter is justified as follows: when $\pi$ exits $C(v)$, the last element of $\pi$ before the exit is $v$ and the first vertex of a possible re-entry is again $v$; this double occurrency of $v$ is eliminated, since the successive/double occurrence of $v$ arises from the re-entry induced by the subpath's edge in the exterior of $C(v)$ leading back to $v$. In particular, if $u_0\notin C(v)$ then we must have $x_0=v$. We illustrate this subpath construction in a picture later.
Finally, we remark that, if $\{u_0,\ldots,u_n\}\cap C(v)=\{v\}$, then $\pi\bigl|_{C(v)}=[v]$.
\par
Vice versa, we define the restriction of $\pi$ to the exterior of $C(v)$ as follows if $\{u_0,\ldots,u_m\}\cap C(v)^c\neq \emptyset$ then
$$
\pi\bigl|_{\neg C(v)}:=[w_0,w_1,\ldots,w_\lambda],
$$
where
$$
w_0 := u_{k_0} \quad \textrm{ with } k_0:=\inf\bigl\{\ell\in\{0,\ldots,m\} \,\bigl|\, u_\ell \notin C(v)\setminus\{v\} \bigr\}
$$
and for $j\geq 1$: 
$$
w_j := u_{k_j} \textrm{ with } k_j:=\inf\left\{\ell\in\{1,\ldots,m\} \,\biggl|\, \begin{array}{c} \ell > k_{j-1}, u_\ell \notin C(v)\setminus\{v\},\\ u_\ell\neq x_{\ell-1}\end{array} \right\}.
$$
In other words, $\pi\bigl|_{\neg C(v)}$ arises from $\pi$ by removing  all subpaths which are located \textit{inside} of $C(v)$, where once again successive occurrences of $v$ are eliminated. This is  justified due to the following:  when $\pi$ enters $C(v)$, the entry is at $v$, and the last element of $\pi$ before a (possible) exit of $C(v)$ is also $v$; this double occurrency of $v$ is eliminated, since the successive/double occurrence of $v$ arises from the return to $v$ induced by the corresponding subpath's edge in the interior of $C(v)$. Observe that, if \mbox{$\{u_0,\ldots,u_n\}\cap C(v)^c=\{v\}$,} then $\pi\bigl|_{\neg C(v)}=[v]$.
\par
We illustrate the subpath constructions of $\pi\bigl|_{C(v)}$ and $\pi\bigl|_{\neg C(v)}$ in the following picture:
\begin{center}
\begin{tikzpicture}[>=Latex, scale=1.2]

% Farben
\definecolor{myred}{RGB}{200,0,0}
\definecolor{mygreen}{RGB}{0,150,0}

% Kegel
\draw[thick,dashed] (0,0) -- (2.5,3);
\draw[thick,dashed] (0,0) -- (2.5,-3);

% -------------------------
% Linke obere rote Kurve
% -------------------------
\draw[myred, thick, ->] 
(-3,0.3) 
.. controls (-2.3,1) and (-0.2,1.2) .. (-0.8,0.8)
.. controls (-1.9,0.5) and (-0.5,0.2) .. (0,0)
node[pos=0., circle, fill=black, inner sep=1.8pt, label=right:$u_i$] {};

% -------------------------
% Linke untere rote Loop
% -------------------------
\draw[myred, thick, ->]
(0,0) 
.. controls (-0.8,-0.4) and (-1.6,-1.2) .. (-1.2,-1.7)
.. controls (-0.6,-2.0) and (-0.1,-1.3) .. (0,0)
node[pos=0, circle, fill=black, inner sep=1.8pt, label=left:$u_k$] {};

% -------------------------
% Grüne obere Loop
% -------------------------
\draw[mygreen, thick, ->]
(0,0) 
.. controls (1.0,0.8) and (2.0,1.1) .. (2.6,0.9)
.. controls (2.8,0.6) and (2.2,0.3) .. (1.3,0.25)
node[pos=0.01, circle, fill=black, inner sep=1.8pt, label=above:$u_j$] {}
.. controls (0.7,0.2) .. (0,0);

% -------------------------
% Untere grüne Kurve
% -------------------------
\draw[mygreen, thick, ->]
(0,0) 
.. controls (1.2,-0.5) and (1.9,-0.9) .. (2.8,-1.2)
node[pos=0.5, circle, fill=black, inner sep=1.8pt, label=below:$u_\ell$] {}
node[pos=1.0, circle, fill=black, inner sep=1.8pt, label=above:$u_n$] {};

% Beschriftungen
\node at (2.1,1.7) {$C(v)$};
\node[mygreen] at (2.5,0.2) {$\pi|_{C(v)}$};
\node[myred] at (-1.5,1.3) {$\pi|_{\neg C(v)}$};
\fill (-3,0.3) circle (2pt);
\node[left] at (-3,0.3) {$o$};

% Punkt v
\fill (0,0) circle (2pt);
\node[above] at (0,0) {$v$};

\end{tikzpicture}
\end{center}
In the picture above, we consider the path 
$$
\pi=[o,\ldots,u_i,\ldots,v,\ldots,u_j,\ldots,v,\ldots,u_k,\ldots,v,\ldots, u_\ell,\ldots,u_n].
$$ 
Its restriction onto $C(v)$ is given by the concatenation of the  green paths
$$
\pi\bigl|_{C(v)}=[v,\ldots,u_j,\ldots,v,\ldots, u_\ell,\ldots,u_n],
$$
while  its restriction onto $C(v)^c$ is given by the concatenation of the red paths
$$
\pi\bigl|_{\neg C(v)}=[o,\ldots,u_i,\ldots,v,\ldots, u_k,\ldots,v].
$$
We have:
\begin{lemma}\label{lem:path-restriction-well-defined}
Let be $\pi=[u_0,\ldots,u_m]\in\Pi$ and $v\in V$. Then:
\begin{enumerate}
\item If $\{u_0,\ldots,u_m\}\cap C(v)\neq \emptyset$ then $\pi\bigl|_{C(v)}$ is a well-defined path.
\item If $\{u_0,\ldots,u_m\}\cap C(v)^c\neq \emptyset$ then $\pi\bigl|_{\neg C(v)}$ is a well-defined path.
\end{enumerate}
\end{lemma}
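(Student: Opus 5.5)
The plan is to check directly that consecutive vertices of $\pi\bigl|_{C(v)}$ (resp.\ $\pi\bigl|_{\neg C(v)}$) are joined by an oriented edge of $\mathcal{X}$. The one structural fact needed is the cut-point property of cones: if $[u_{\ell-1},u_\ell]$ is an edge of $\mathcal{X}$ with $u_{\ell-1}\in C(v)\setminus\{v\}$ and $u_\ell\notin C(v)$ (or the reverse), we get a contradiction. I will prove this from the inductive construction of $\mathcal{X}$. Every edge lies inside a single copy of some $\mathcal{X}_i$ attached at a word $w$, so its endpoints have the form $wa,wb$ with $a,b\in V_i$ (where $wo_i=w$). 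Write $v=v_1\dots v_m$. Suppose $wa$ has prefix $v$ but $wb$ does not. Then $w$ cannot have prefix $v$, so $wa=v$ and $a=v_m\neq o_i$. This forces $wb$ to have the form $v_1\dots v_{m-1}b$, which is outside $C(v)\setminus\{v\}$. The symmetric case is identical. Hence any edge between $C(v)$ and $C(v)^c$ has $v$ as its endpoint in $C(v)$. For $v=o$ the statement is trivial, since $C(o)=V$; so we take $v\neq o$.

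For part (1), take consecutive chosen indices $i_{j-1}<i_j$. There are two cases.
\begin{enumerate}
\item If $i_j=i_{j-1}+1$, then $[x_{j-1},x_j]=[u_{i_j-1},u_{i_j}]$ is an edge of $\pi$. Here I read the condition $u_\ell\neq x_{\ell-1}$ in the definition as $u_\ell\neq x_{j-1}$, i.e.\ only immediate repetitions are removed. This is consistent with the standing assumption $p_i(x,x)=0$, so no actual edge is a loop.
\item If $i_j>i_{j-1}+1$, then some $u_\ell$ with $i_{j-1}<\ell<i_j$ is skipped. By minimality of $i_j$, every skipped vertex either lies outside $C(v)$ or equals $v=x_{j-1}$. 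The walk therefore leaves $C(v)$ from $x_{j-1}$; by the cut-point property the exit happens from $v$, so $x_{j-1}=v$. The walk re-enters $C(v)$ only through $v$. So $u_{i_j-1}$ is either $v$ itself, in which case the step $u_{i_j-1}\to u_{i_j}$ is an edge from $v=x_{j-1}$, or it lies outside $C(v)$, in which case $u_{i_j}\in C(v)$ forces $u_{i_j}=v=x_{j-1}$. The latter contradicts $u_{i_j}\neq x_{j-1}$.
\end{enumerate}
So in every case $[x_{j-1},x_j]$ is an edge. The initial index $i_0$ exists by hypothesis, and if $i_0>0$ then $u_{i_0}$ is an entry point, so $x_0=v$, as the text notes.

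Part (2) is the mirror argument with $C(v)^c\cup\{v\}$ in place of $C(v)$. Skipped vertices lie in $C(v)\setminus\{v\}$ or equal the previous $w_{j-1}$. Every excursion into $C(v)\setminus\{v\}$ starts from $v$ and returns to $v$, again by the cut-point property, so the edge leaving the excursion starts at $v=w_{j-1}$. The main obstacle is making the index bookkeeping airtight at the cone boundary, in particular that the last vertex before an excursion equals the next kept vertex's predecessor. The cut-point lemma is the real content, and I would isolate it as a preliminary claim before carrying out the case analysis above.
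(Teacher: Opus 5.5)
Your proposal is correct and takes essentially the same route as the paper. The paper also uses that $C(v)$ can only be entered or left through $v$, so every removed excursion starts and ends at $v$ and the retained vertices on either side of it are joined by an edge, and it handles part (2) analogously. Your version is more explicit: you prove this cut-point property from the construction of $\mathcal{X}$, use $p_i(x,x)=0$ to rule out immediate repetitions, and read the index $x_{\ell-1}$ in the definition as $x_{j-1}$.
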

\begin{proof}
Let be $\pi=[u_0,\ldots,u_m]\in\Pi$ and $v\in V$ with $\{u_0,\ldots,u_m\}\cap C(v)\neq \emptyset$. Consider $\pi\bigl|_{C(v)}=[x_0,\ldots,x_\kappa]$.
Clearly, we either must have $x_0=u_0\in C(v)$ or the first element of $\pi\bigl|_{C(v)}$ has to be $x_{0}=v$ since the cone $C(v)$ can only be entered through $v$.
Assume now that the path $\pi$ has the form 
$$
[\ldots,u_{s-1},u_s=v,y_{1},\ldots,y_{t-1},y_t=u_{s+t}=v,u_{s+t+1},\ldots]
$$ 
with $y_1,\ldots,y_{t-1}\notin C(v)$. Then, by definition of $\pi\bigl|_{C(v)}$ the subpath $[y_1,\ldots,y_t]$ is removed and we get the ``reduced" path
$$
[\ldots,u_{s-1},u_s=v,u_{s+t+1},\ldots].
$$
If $u_{s+t+1}\in C(v)$ then we still have  that there exists an edge from $v=u_s=u_{t+s}$ to $u_{t+s+1}$; if $u_{s+t+1}\notin C(v)$, then we iterate the subpath removal procedure again. In any case, we have that two consecutive elements of $\pi\bigl|_{C(v)}$ are joined by an edge, since $u_\ell=u_{\ell+1}$ can only happen when $u_\ell=v$, but these double occurrences are removed by the construction of $\pi$'s restriction onto $C(v)$. This shows that $\pi\bigl|_{C(v)}$ is a well-defined path.
\par
Completely analogously, we can show that $\pi\bigl|_{\neg C(v)}$ is also a well-defined path if $\{u_0,\ldots,u_m\}\cap C(v)^c\neq \emptyset$.
\end{proof}

\begin{ex}
Consider Example \ref{example:free-product}. We have $a\in V_1^\times$ and $b\in V_2^\times$. If $\pi=[\id,a,ab,aba,ab,a,\id,b,\id,a]$, then 
$$
\pi\bigl|_{C(a)}=[a,ab,aba,ab,a],\ 
\pi\bigl|_{C(ab)}=[ab,aba,ab], \
\pi\bigl|_{C(b)}=[b],
$$
and
$$
\pi\bigl|_{\neg C(a)}=[o,a,o,b,o,a], \ 
\pi\bigl|_{\neg C(ab)}=[o,a,ab,a,o,b,o,a].
$$
\end{ex}

\subsection{Random Walks on Free Products}
\label{subsec:random-walks}

We now construct in a natural way a random walk on $V$ arising from $P_1$ and $P_2$. For this purpose, we lift the transition matrices $P_1$ and $P_2$ to transition matrices $\bar P_1=\bigl(\bar p_1(x,y)\bigr)_{x,y\in V}$ and $\bar P_2=\bigl(\bar p_2(x,y)\bigr)_{x,y\in V}$ on $V$: if $u\in V$ with $\delta(u)\neq i\in\calI$ and $v,w\in V_i$, then $\bar p_i(uv,vw):=p_i(v,w)$. Otherwise, we set $\bar p_i(x,y):=0$ for $x,y\in V$. Choose now any $\alpha\in (0,1)$ and fix it. We set $\alpha_1:=\alpha$ and $\alpha_2:=1-\alpha$. Then we define a new transition matrix $P$ on $V$  by
$$
P=\alpha \cdot \bar P_1 + (1-\alpha)\cdot \bar P_2=\bigl(p(x,y)\bigr)_{x,y\in V},
$$
which governs a time-homogeneous random walk $(X_n)_{n\in\N_0}$ on $V$ (or on $\mathcal{X}$). We set $X_0:=o$ as the starting point of our random walk. The random walk can be interpreted as follows: if the random walk stands at some vertex  $u=u_1\ldots u_m\in V$ with $\delta(u)=i\in\calI$,  a coin is tossed and afterwards -- in dependence of the outcome of the coin toss --  the random walk either performs one step within the copy of $\mathcal{X}_i$ to which $u$ belongs according to $\bar p_i(u_m,\cdot)$ \textit{or} one step is performed  into the new copy of $\mathcal{X}_j$, $j\in\calI\setminus\{i\}$, attached at $u$ according to $\bar p_j(o_j,\cdot)$. E.g., in the context of Example \ref{example:free-product}, if the random walk stands at $u=aca$, then it moves with probability $\alpha_1\cdot p_1(a,o_1)$ within the copy of $\mathcal{X}_1$ of $aca$ to $ac$ and with probability $\alpha_2\cdot p_2(o_2,b)$ to $acab$ within the next copy of $\mathcal{X}_2$ attached at $aca$. Observe that the graph $\mathcal{X}$ is the transition graph w.r.t. $P$.
\par
By definition, every path $[v_0,\dots,v_n]$ in $\mathcal{X}$ has strictly positive probability $\P\bigl[X_1=v_1,\dots,X_n=v_n|X_0=v_0\bigr]>0$ to be performed when starting at the path's initial vertex $v_0$.
As an abbreviation we write $\P_v[\,\cdot\,]:=\P[\,\cdot\, \mid X_0=v]$ for $v\in V$.
\par
The equation in the following lemma will be crucial in our proofs, which states that probabilities of paths within a cone depend only on their relative location to the cone's root:
\begin{lemma}\label{lem:cone-probs}
Let be $n\in\mathbb{N}$, $v\in V$ and $u_0,u_1,\dots,u_n\in C(v)$. Write $u_i=vu_i'$ for $i\in\{0,1,\dots,n\}$. Then:
$$
\P_{u_0}[X_1=u_1,\dots,X_n=u_n] = \P_{u_0'}[X_1=u'_1,\dots,X_n=u'_n].
$$
\end{lemma}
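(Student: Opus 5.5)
By the Markov property, both sides factor into products of one-step transition probabilities:
$$
\P_{u_0}[X_1=u_1,\dots,X_n=u_n]=\prod_{k=1}^n p(u_{k-1},u_k),\qquad \P_{u_0'}[X_1=u_1',\dots,X_n=u_n']=\prod_{k=1}^n p(u_{k-1}',u_k').
$$
So the plan is to reduce the lemma to the one-step identity
$$
p(vx,vy)=p(x,y)\quad\text{for all }x,y\in V \text{ with } vx,vy\in C(v),
$$
and then multiply over $k$. If $v=o$ there is nothing to prove, so assume $v=v_1\ldots v_m\neq o$ with $\delta(v)=j$. Every $x$ with $vx$ a well-defined word is either $o$ or begins with a letter from $V_{i}^\times$, $i\neq j$.

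For the one-step identity we use $p=\alpha_1\bar p_1+\alpha_2\bar p_2$ and check $\bar p_i(vx,vy)=\bar p_i(x,y)$ for each $i\in\calI$. Recall that $\bar p_i(z,w)>0$ only if $z=uv'$ and $w=uw'$ with $\delta(u)\neq i$, $v',w'\in V_i$, and then $\bar p_i(z,w)=p_i(v',w')$. So the task is to show that such a decomposition for the pair $(x,y)$ corresponds bijectively to one for $(vx,vy)$, via prefixing $u\mapsto vu$, with the same letters $v',w'$.

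We split into cases according to whether $x$ and $y$ are $o$ or not.

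\emph{Case $x,y\neq o$.} The decomposition $x=uv'$, $y=uw'$ gives $vx=(vu)v'$ and $vy=(vu)w'$.
\begin{itemize}
\item If $u\neq o$, then $\delta(vu)=\delta(u)$, so the type condition is preserved.
\item If $u=o$, then $v',w'\in V_i$ are single letters (or $o_i$) forming the first letter of $x$ and $y$, which forces $i\neq j$. Hence $\delta(v)\neq i$ as well.
\end{itemize}

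\emph{Case $x=o$ or $y=o$.} Here $vx=v$ or $vy=v$. A step from $v$ to $vy$ inside $C(v)$ must be a step into the copy of $\mathcal{X}_i$, $i\neq j$, attached at $v$. That is, $u=v$ with $v'=o_i$, corresponding to $u=o$ with $v'=o_i$ on the unshifted side. Symmetrically, a step from $vx$ to $v$ has $u=v$ and $w'=o_i$.

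The only other possible decomposition of $v$ is $u=v_1\ldots v_{m-1}$ with $v'=v_m\in V_j$. This would yield a $w'$-step leaving $C(v)$, which is excluded because $vy\in C(v)$. Conversely, every decomposition on the shifted side arises this way, by stripping the prefix $v$. Hence $\bar p_i$ values agree, and summing with weights $\alpha_i$ gives $p(vx,vy)=p(x,y)$.

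The main obstacle is this bookkeeping at the cone root $v$ itself. There, $v$ sits in two copies of factor graphs: the copy of $\mathcal{X}_j$ containing its last letter, and the attached copy of $\mathcal{X}_i$. One must argue that the $\bar P_j$ contribution from the former never matters, because a step inside $C(v)$ from $v$ (or to $v$) cannot use the edge of the copy of $\mathcal{X}_j$ through $v_m$: that would lead outside $C(v)$, or to $v$ itself, which is excluded by $p_j(x,x)=0$. This is where the convention $o_i\equiv o$ and the definition of $\bar p_i$ must be used carefully. Once this is settled, the product formula finishes the proof, and Corollary~\ref{cor:shifted-paths} about shifted paths follows immediately.
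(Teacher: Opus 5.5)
Your proof is correct and is essentially the standard argument. The paper gives no proof of its own and only refers to \cite[Lemma 3.2]{gilch:22}, which rests on the same two steps you use: factorize the path probability via the Markov property, then check the one-step identity $p(vx,vy)=p(x,y)$ through the decompositions that define $\bar p_i$, with the extra care needed at the cone root $v$. You rightly point out that the standing assumption $p_i(x,x)=0$ is genuinely needed there. For $v=a\in V_1^\times$ one has $p(a,a)-p(o,o)=\alpha_1\bigl(p_1(a,a)-p_1(o_1,o_1)\bigr)$, which need not vanish if loops are allowed.
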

\begin{proof}
See \cite[Lemma 3.2]{gilch:22}.
\end{proof}
An immediate consequence of the last lemma is:
\begin{cor}\label{cor:shifted-paths}
Let be $v=v_1\ldots v_m\in V\setminus\{o\}$ and 
$\pi=[u_0,u_1,\ldots,u_n]\in \Pi$, $n\in\N$, with $u_0,\ldots,u_n\in C(v)$. Furthermore, let be $w\in V\setminus\{o\}$ with $\delta(w)\neq \delta(v_1)$. Then $w\pi$ and $v^{-1}\pi$ are well-defined paths.
\end{cor}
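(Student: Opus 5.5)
The plan is to translate the path property into a statement about positive one-step transition probabilities, and then read the conclusion off Lemma~\ref{lem:cone-probs}. A sequence $[x_0,\ldots,x_n]$ of vertices of $V$ is a path in $\mathcal{X}$ exactly when $p(x_{i-1},x_i)>0$ for all $i$. By the Markov property this is equivalent to $\P_{x_0}[X_1=x_1,\ldots,X_n=x_n]=\prod_{i=1}^n p(x_{i-1},x_i)>0$. So it suffices to show that the relevant path probabilities stay strictly positive after shifting by $w$ and after cancelling the prefix $v$.

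\emph{The path $v^{-1}\pi$.} Since all $u_i\in C(v)$, we can write $u_i=vu_i'$ with $u_i'\in V$, and by definition $v^{-1}\pi=[u_0',\ldots,u_n']$. Because $\pi$ is a path, $\P_{u_0}[X_1=u_1,\ldots,X_n=u_n]>0$. Lemma~\ref{lem:cone-probs} then gives $\P_{u_0'}[X_1=u_1',\ldots,X_n=u_n']>0$, so every consecutive pair $(u_{i-1}',u_i')$ has positive transition probability. Hence $v^{-1}\pi$ is a path.

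\emph{The path $w\pi$.} First I check that each $wu_i$ is a genuine word in $V$. Every $u_i$ has prefix $v$, so its first letter is $v_1$. Since $\delta(w)\neq\delta(v_1)$, the last letter of $w$ and the first letter of $u_i$ lie in different factors, so the concatenation $wu_i$ is a reduced word in $V$. It is automatically in $C(w)$; in fact $wu_i\in C(wv)$, with $wv\in V$ by the same type argument.

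Next I apply Lemma~\ref{lem:cone-probs} with cone root $w$ and the points $wu_i\in C(w)$, whose reduced suffixes are exactly $u_i$. This yields
$$
\P_{wu_0}[X_1=wu_1,\ldots,X_n=wu_n]=\P_{u_0}[X_1=u_1,\ldots,X_n=u_n]>0,
$$
so $w\pi$ is a path.

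\emph{Expected obstacle.} The only delicate point is making sure Lemma~\ref{lem:cone-probs} is applied with a correct decomposition $x=\text{root}\cdot x'$. In particular, if $u_i=v$ then $u_i'=o$, which is an admissible element of $V$, so this case causes no trouble. For the shift, the substantive check is that $wu_i$ is well-defined, and that is exactly where the hypothesis $\delta(w)\neq\delta(v_1)$ enters. Beyond this, the statement is a direct consequence of the lemma.
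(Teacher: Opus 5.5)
Your proof is correct and follows the same route as the paper, which simply invokes Lemma~\ref{lem:cone-probs} together with the observation that adding or removing a common prefix preserves adjacency in $\mathcal{X}$. You also make explicit the two points the paper leaves implicit. First, $wu_i$ is a reduced word because $\delta(w)\neq\delta(v_1)$. Second, the shift $w\pi$ is handled by applying the lemma with cone root $w$ and suffixes $u_i$.
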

\begin{proof}
This follows immediately from Lemma \ref{lem:cone-probs} and from the fact that $w\pi$ and $v^{-1}\pi$ arise from $\pi$ by adding or removing some prefix at each element $u_i$, which preserves the adjacency in $\mathcal{X}$ between consecutive vertices of $\pi$.
\end{proof}

The structure of the free product together with Lemma \ref{lem:cone-probs} yields that, for all $i\in\mathcal{I}$ and all $v\in V\setminus\{o\}$ with $\delta(v)=i$, the probability 
\begin{equation*} %\label{equ:xi}
\xi_i:=\mathbb{P}\bigl[ \exists n\in\mathbb{N}: X_n\notin C(v)\mid X_0=v\bigr]>0
\end{equation*}
does not depend on $v$. In other words, 
$$
1-\xi_i = \mathbb{P}_v\bigl[ \forall t\geq 1: X_t\in C(v)\bigr]
$$ 
is the probability that the random walk starting at any $v\in V\setminus\{o\}$ with $\delta(v)=i$ does not leave the cone $C(v)$ any more.
In \cite[Lemma 2.3]{gilch:07} it has been proven that $\xi_i<1$ for every $i\in\mathcal{I}$.
\par
An important random walk's characteristic is given by the \textit{spectral radius at $o$}, which is defined as
$$
\varrho:= \limsup_{n\to\infty} \P[X_n=o]^{1/n}.
$$
As a \textit{basic main assumption} throughout this paper, we assume that 
$$
\varrho <1.
$$
This is equivalent to the fact that the \textit{Green function} 
$$
G(o,o|z):=\sum_{n\geq 0} p^{(n)}(o,o)\cdot z^n, z\in\mathbb{C},
$$
has radius of convergence $\mathscr{R}>1$.
This assumption implies \textit{transience} of the random walk governed by $P$ and excludes degenerate cases. In particular, this assumption excludes the irreducible, recurrent case $|V_1|=|V_2|=2$, in which $V$ becomes the free product $\bigl(\mathbb{Z}/(2\mathbb{Z})\bigr)\ast \bigl(\mathbb{Z}/(2\mathbb{Z})\bigr)$, where the underlying random walk is group-invariant and recurrent.
 If  $P_1$ or $P_2$ is not irreducible, then it is easy to check that $\varrho <1$. In general, this basic assumption is satisfied in many cases; e.g., if $P_1$ and $P_2$ govern irreducible and reversible random walks on $V_1$ and $V_2$, then $\varrho<1$; see Woess \cite[Theorem 10.3]{woess}. 

\subsection{Convergence of Random Walks}

\label{subsec:convergence-of-RW}

In this subsection, we summarise a few results on how the random walk $(X_n)_{n\in\N_0}$ introduced in the previous subsection converges in some sense to ``infinity''. For this purpose, we denote by $V_\infty$ the set of \textit{infinite} words $v_1v_2v_3\dots$ over the alphabet $V^\times_\ast$ such that no two consecutive letters arise from the same $V_i^\times$. For $u\in V$ and $v\in V_\infty$, we denote by $u\wedge v$ the common prefix of maximal length of $u=u_1\ldots u_m\in V$ and $v=v_1v_2\ldots$, that is,
$$
u \wedge v =u_1\ldots u_k, \textrm{ where }
k=\max\bigl\{l\in \{0,\ldots,m\} \mid u_1\ldots u_l=v_1\ldots v_l\bigr\}.
$$
In \cite[Proposition 2.5]{gilch:07} it has been shown that the random walk $(X_n)_{n\in\N_0}$ converges to some $V_\infty$-valued random variable $X_\infty$ in the sense that the length of the common prefix of $X_n$ and $X_\infty$ diverges to infinity almost surely. 
In other words, we have $\lim_{n\to\infty} \Vert X_n\wedge X_\infty\Vert=\infty$ almost surely. That is, the length of $X_n$ tends almost surely to infinity, and more and more letters at the beginning of $X_n$ finally stabilise.

\subsection{Final Cone Entry Times}\label{subsec: paths}

In this subsection, we introduce random times which are adapted to the limit process from the previous subsection.
Recall that $\Pi$ is the set of all finite paths in $V$. As explained in Subsection \ref{subsec:convergence-of-RW}, we have  $\Vert X_n\Vert\to\infty$ almost surely as $n\to\infty$. By the structure of free products, this implies that more and more letters at the beginning of $X_n$ stabilise, that is, those letters do \textit{not} change any more after some finite time. 
This motivates the definition of the \textit{$k$-th final cone entry time} for $k\in\mathbb{N}$:
\begin{eqnarray*}
\e_k &:=& \inf\Bigl\{ \ell\in\mathbb{N} \,\Bigl|\, \textrm{first $k$ letters of $X_\ell$ have stabilized} \Bigr\}\\
&=&\inf\Bigl\{ \ell\in\mathbb{N} \,\Bigl|\, \Vert X_\ell\Vert=k, \forall i\geq \ell:  X_i \in C(X_\ell) \Bigr\}.
\end{eqnarray*}
In other words, $\e_k$ is the first instant of time at which the random walk enters the cone $C(X_{\e_k})$ and remains afterwards in it for the entire future, that is, $\e_k$ is the first instant of time from which on the first $k$ letters do not change any more. In particular, we have $X_{\e_{k}-1}\notin C(X_{\e_k})$ and $C(X_{\e_k})\subset C(X_{\e_{k-1}})$. Additionally, we set $\e_0:=0$. These final cone entry times played a crucial role in several articles in the past;
compare, e.g.,  with Nagnibeda and Woess \cite{nagnibeda-woess} or \cite{gilch:07,gilch:22}. Since $\Vert X_n\Vert\to\infty$ almost surely, we have $\e_k<\infty$ almost surely for all $k\in\N$. Moreover, we set for $n\in\N$
$$
\mathbf{k}(n):= \max\bigl\{ k\in\N \mid \e_k\leq n\bigr\}.
$$
We introduce further notation associated with final cone entry times.
For $k\in \N$, the \textit{$k$-th sphere} is defined as
$$
\mathcal{S}_k:= \bigl(C(X_{\e_{k-1}}) \setminus C(X_{\e_{k}})\bigr)\cup \{X_{\e_{k}}\}.
$$
Since $X_{\e_{0}}:=\id$ and $C(\id)=V$ we have $\mathcal{S}_{1}= \bigl(V\setminus C(X_{\e_1})\bigr)\cup\{X_{\e_1}\}$.

For $k\in \mathbb{N}$, we define now the restriction of the random walk trajectory $[X_0,X_1,X_2,\ldots,]$  onto $\mathcal{S}_k$ by 
$$
\pi_k := [X_0,\ldots,X_{\e_k}]\bigl|_{\mathcal{S}_k} := \Bigl([X_0,\ldots,X_{\e_k}]\bigl|_{C(X_{\e_{k-1}})}\Bigr)\biggl|_{\neg C(X_{\e_{k}})}.
$$
In particular, we have $\pi_1=  [X_0,\ldots,X_{\e_1}]\bigl|_{\neg C(X_{\e_1})}$.
In other words, we remove from $[X_0,\ldots,X_{\e_k}]$ all vertices which do not belong to $\mathcal{S}_k$, including double successive occurrencies of the exit/entry vertices  of $\mathcal{S}_k$, namely $X_{\e_{k-1}}$ and $X_{\e_{k}}$. By Lemma \ref{lem:path-restriction-well-defined} and almost sure finiteness of $\e_{k}$, $\pi_k = [X_0,\ldots,X_{\e_k}]\bigl|_{\mathcal{S}_k}$ is a well-defined random path for each $k\in\N$ which goes from $X_{\e_{k-1}}$ to $X_{\e_k}$.
\par
Additionally, we set for $n\in\N$:
$$
\pi^\ast_{n}=\bigl[X_0,\ldots,X_n\bigr]\Bigl|_{C(X_{\e_{\kk(n)}})}.
$$

\begin{ex}
Consider Example \ref{example:free-product}, where 
$a\in V_1^\times$ and $b\in V_2^\times$, and the random walk trajectory 
$$
[X_0,X_1,\ldots]=[o,a,ab,aba,ab,a,o,b,o,a,ab,v_0,v_1\ldots]\in \Pi_\infty,
$$
where $v_0,v_1,\ldots \in C(ab)$. Then we have $\e_1=9$, $X_{\e_1}=a$ and $\e_2=10$, $X_{\e_2}=ab$. This leads to 
$$
\pi_1=[o,a,o,b,o,a],\quad  \pi_2=[a,ab,a,ab]
$$
and 
$$
\pi^\ast_{9}=[a,ab,aba,ab,a], \quad \pi^\ast_{10}=[ab,aba,ab].
$$
\end{ex}

\subsection{Cone-Additive Functions}

In the following, we introduce a class of functions which is adapted to the sphere decomposition principle of paths w.r.t. the last cone entry times.

\begin{definition}\label{def: cone add}
Consider a function 
$$
f: \Pi\to\mathbb{R}, \pi \mapsto f(\pi).
$$
Then $f$ is called  \textit{cone-additive} w.r.t. $P$ if the following two properties hold:
%for all finite paths $\pi=[\id=x_0,x_1,\ldots,x_n]\in\Pi$:
\begin{enumerate}[label=(\roman*)]
\item\label{cutoff} 
Additivity property: for all $n\in\N$,
$$
f\bigl([X_0,\ldots,X_n]\bigr)= \sum_{j=1}^{\mathbf{k}(n)} f(\pi_j) + f(\pi^\ast_{n}) \quad \textrm{ almost surely.}
$$
\item\label{shift invariance} Shift invariance: for all $j\in\N$, 
\begin{equation}\label{equ:shift-invariance-pi_k}
f(\pi_j)=f\bigl(X_{\e_{j-1}}^{-1}\pi_j\bigr).
\end{equation}
\end{enumerate}
\end{definition}
The additivity property of a function $f$ ensures that the (random) function value $f\bigl([X_0,\ldots,X_n]\bigr)$ can be evaluated by taking the function values of the subpaths $\pi_k$ located inside $\mathcal{S}_k$ plus some remnants at the  end of the path (that is, $f(\pi^\ast_{n})$). It can be seen as a separation property by space.
Moreover, cone-additivity ensures that the (random) function value $f(\pi_j)$, $j\in \N$,  does only depend on the relative location of $\pi_j$ within the cone $C(X_{\e_{j-1}})$ and the value of $f(\pi_j)$ does not change if we remove the common prefix $X_{\e_{j-1}}$ of the elements of $\pi_j$.
\par
In particular, shift invariance holds if $f(\pi)=f\bigl(v^{-1}\pi\bigr)$ for all $v\in V$ and $\pi=[u_0,\ldots,u_m]\in\Pi$  with $u_0,\ldots,u_m\in C(v)$. However, this may be more restrictive than necessary.

\begin{ex}
Consider the range of a path $\pi=[x_0,x_1,\ldots,x_n]\in \Pi$, $n\in\N$, which is given by
$$
\mathrm{Range}(\pi)=\{x_0,x_1,\ldots,x_n\}.
$$
We define the function $f:\Pi\to \N_0$ by
$$
\Pi \ni \pi'=[u_0,u_1,\ldots,u_m] \mapsto f(\pi'):=\bigl|\{u_0,\ldots,u_m\}\setminus \{u_0\}\bigr|.
$$
Then  $f(\pi_k)$ counts the number of distinct visited vertices of $(X_n)_{n\in\N_0}$ inside the set $\mathcal{S}_k':=\mathcal{S}_k\setminus \{X_{\e_{k-1}}\}$. Observe that $X_{\e_{k-1}}$ is already counted in $f(\pi_{k-1})$ if $k\geq 2$. Hence, $X_0=o$  is not counted in $f(\pi_1),\ldots, f(\pi_{\mathbf{k}(n)}), f(\pi_n^\ast)$ for any $n\in\mathbb{N}$. Since the sets $\bigl(\mathcal{S}_k'\bigr)_{k\in\N}$ are pairwise disjoint, $f$ fulfills the additivity property
$$
f\bigl([X_0,\ldots,X_n]\bigr)=f(\pi_1)+\ldots + f(\pi_{\mathbf{k}(n)}) + f(\pi_n^\ast).
$$
Shift invariance is obvious, that is, $f$
is cone-additive, and we have 
$$
f\bigl([X_0,\ldots,X_n]\bigr)=\bigl|\{X_0,\ldots,X_n\}\bigr|-1=\bigl|\{X_0,\ldots,X_n\}\setminus \{X_0\}\bigr|.
$$
\end{ex}
As we will see in Section \ref{sec:applications}, cone-additivity is satisfied in many further interesting cases.
\par
\begin{rem}
We defined the additivity property of cone-additive functions in terms of  the trajectories of the random walk $(X_n)_{n\in\N_0}$. However, it is also possible to define this property deterministically for finite paths. This would make it necessary to re-define last cone entry times for finite paths and to adapt the definition of the $\pi_k$'s. In this case, the values of  $\pi_k=\pi_k^{(n)}$ would depend on $n$ and could change as the random walk evolves, but these paths $\pi_k^{(n)}$ would finally stabilise after some finite time. In order to avoid superfluities and technical difficulties, we have agreed to the definition of the additivity property as in Definition \ref{def: cone add}.
\end{rem}

Let us now formulate our main result. 
We are interested in the asymptotic behaviour of $\frac1n f\bigl([X_0,\ldots,X_n]\bigr)$ as $n\to\infty$, where $f$ is  a  cone-additive function  on $\Pi$. In the following, denote by $\mathcal{R}>1$ the number from  \mbox{Proposition \ref{prop: E psi-k uniform finite}.}
Then:

\begin{thm} \label{thm:limit-theorem} 
Let $f:\Pi \to \mathbb{R}$ be a cone-additive function. 
Assume that $G(o,o|z)$ has radius of convergence strictly bigger than $1$ and  that there exist a constant $C\in (0,\infty)$ and $R_f \in (0,\mathcal{R})$ 
such that 
\begin{equation}\label{cond: rest bounded}
 \bigl|f\bigl(\pi\bigr)\bigr| \leq C \cdot R_f^{\mathrm{length}(\pi)}\quad \textrm{ for all } \pi\in\Pi.
\end{equation}
Then there exists some constant $\mathfrak{c}\in\mathbb{R}$ such that
$$
\lim_{n\to\infty} \frac1n f\bigl([X_0,\ldots,X_n]\bigr) = 
\mathfrak{c}\quad \textrm{almost surely}.
$$    
Moreover, if $f$ is strictly positive then $\mathfrak{c}>0$. 
\end{thm}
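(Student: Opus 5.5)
We split $f([X_0,\ldots,X_n])$ via the additivity property, Definition~\ref{def: cone add}\ref{cutoff}, into the main sum $\sum_{j=1}^{\kk(n)} f(\pi_j)$ and the remainder $f(\pi^\ast_n)$, and treat each with a renewal-type ergodic argument along the final cone entry times.

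\textbf{Step 1: Markov structure.} The key structural fact is that the increments
$$
\bigl(\delta(X_{\e_{j}}),\ X_{\e_{j-1}}^{-1}\pi_j,\ \e_j-\e_{j-1}\bigr)_{j\geq 2}
$$
form a Markov chain driven by the type $\delta(X_{\e_{j-1}})\in\calI$. Given $X_{\e_{j-1}}=v$ of type $i$, the future after $\e_{j-1}$ is a walk started at $v$ conditioned never to leave $C(v)$. By Lemma~\ref{lem:cone-probs}, its law, shifted by $v^{-1}$, depends only on $i$. The conditioning on never leaving $C(v)$ has the positive probability $1-\xi_i$, and it factorizes over disjoint time pieces, in the spirit of the analogous arguments in \cite{gilch:07,gilch:22}. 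Since the type alternates, the chain on $\calI$ is deterministic and periodic. Taking pairs of consecutive blocks therefore yields i.i.d.\ blocks for $j\geq 2$; $\pi_1$ is an independent initial piece.

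\textbf{Step 2: main sum.} By shift invariance \eqref{equ:shift-invariance-pi_k}, $f(\pi_j)=f(X_{\e_{j-1}}^{-1}\pi_j)$ is a function of the $j$-th block alone. To apply the strong law of large numbers to $\sum_j f(\pi_j)$ and to $\e_k=\sum_j(\e_j-\e_{j-1})$ we need integrability. Here we use Proposition~\ref{prop: E psi-k uniform finite}, which I expect gives uniform exponential moments $\E[\mathcal{R}^{\e_k-\e_{k-1}}]<\infty$. Since $\len(\pi_j)\le \e_j-\e_{j-1}$, condition \eqref{cond: rest bounded} gives
$$
\E|f(\pi_j)|\le C\,\E\bigl[R_f^{\e_j-\e_{j-1}}\bigr]<\infty,
$$
as $R_f<\mathcal{R}$; the case $R_f\le 1$ is trivial. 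The SLLN then gives
$$
\frac1k\sum_{j\le k} f(\pi_j)\to \mu_f, \qquad \frac{\e_k}{k}\to \mu_\e\in[1,\infty).
$$
Since $\e_{\kk(n)}\le n<\e_{\kk(n)+1}$, we get $\kk(n)/n\to 1/\mu_\e$. Consequently
$$
\frac1n\sum_{j\le\kk(n)} f(\pi_j)\to \mathfrak{c}:=\frac{\mu_f}{\mu_\e}.
$$
Here $\mu_f$ is the average of $\E f(\pi_j)$ over the two types, and the contribution of $\pi_1$ vanishes after division by $n$.

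\textbf{Step 3: remainder (main obstacle).} It remains to show $f(\pi^\ast_n)/n\to0$ almost surely. We have $\len(\pi^\ast_n)\le n-\e_{\kk(n)}< \e_{\kk(n)+1}-\e_{\kk(n)}$, so $|f(\pi^\ast_n)|\le C R_f^{\,\e_{\kk(n)+1}-\e_{\kk(n)}}$. Because the increments have exponential moments of order $\mathcal{R}>R_f$, pick $s$ with $R_f^{s}<\mathcal{R}$ (take $s=1$ if that suffices and $R_f>1$). Markov's inequality then gives
$$
\P\bigl[R_f^{\e_{k+1}-\e_k}>\epsilon k\bigr]\le C'(\epsilon k)^{-s'}
$$
for some $s'>1$, by choosing the moment $\E[(R_f^{\e_{k+1}-\e_k})^{s'}]$ finite with $R_f^{s'}<\mathcal{R}$. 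This is possible only if $\log\mathcal{R}/\log R_f>1$, which holds. Borel--Cantelli gives $R_f^{\e_{k+1}-\e_k}/k\to0$ a.s., and since $\kk(n)\le n$, the remainder vanishes. The delicate point is ensuring that the moment exponent exceeds $1$ uniformly in $k$, including the non-identically distributed first block; this is exactly what the uniform bound in Proposition~\ref{prop: E psi-k uniform finite} should provide.

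\textbf{Positivity.} If $f>0$, then $f(\pi_j)>0$ a.s., so $\mu_f>0$ and $\mathfrak{c}=\mu_f/\mu_\e>0$ because $\mu_\e<\infty$.
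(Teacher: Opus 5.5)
There is a genuine gap in Step 1. You treat $\pi_j$ as the piece of the trajectory between $\e_{j-1}$ and $\e_j$. By definition, however, $\pi_j=\bigl([X_0,\ldots,X_{\e_j}]|_{C(X_{\e_{j-1}})}\bigr)|_{\neg C(X_{\e_j})}$ is the restriction of the \emph{whole} trajectory up to time $\e_j$ to the sphere $\mathcal{S}_j$. It therefore contains every earlier sojourn of the walk in $C(X_{\e_{j-1}})$ before the final entry time $\e_{j-1}$. Such sojourns can date back arbitrarily far, since the walk may dive into $C(X_{\e_{j-1}})\subset C(X_{\e_{j-2}})$ even before $\e_{j-2}$.

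The paper's own example shows this. There $\e_1=9$ and $\e_2=10$, yet $\pi_2=[a,ab,a,ab]$ has length $3$. Likewise $\pi^\ast_9=[a,ab,aba,ab,a]$ has length $4$ although $9-\e_{\kk(9)}=0$. So your bounds $\len(\pi_j)\le \e_j-\e_{j-1}$ and $\len(\pi^\ast_n)\le n-\e_{\kk(n)}$ are false.

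More seriously, $X_{\e_{j-1}}^{-1}\pi_j$ is not a function of the post-$\e_{j-1}$ block. Your factorization argument is fine for the increments $X_{\e_{j-1}}^{-1}[X_{\e_{j-1}},\ldots,X_{\e_j}]$. But $f(\pi_j)$ depends on the whole past, not on a finite window of these increments. For example, the visits of $\pi_1$ to $X_{\e_1}$ record the entries into $C(X_{\e_1})$ before $\e_1$, and these limit how many loops at $X_{\e_1}$ appear in $\pi_2$; so $\pi_1$ and $\pi_2$ are dependent in general. Hence $(f(\pi_j))_j$ is not i.i.d., not even in pairs conditionally on the types. The SLLN does not apply, and the limit constant is not ``the average of $\E f(\pi_j)$ over the two types''.

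The missing idea is to carry this memory along in a Markov chain. The paper works with $(\W_k,\psi_k)$, where $\psi_k=X_{\e_{k-1}}^{-1}\bigl([X_0,\ldots,X_{\e_k}]|_{C(X_{\e_{k-1}})}\bigr)$ records all visits to $C(X_{\e_{k-1}})$ up to time $\e_k$, so that $f(\pi_k)=f(\psi_k|_{\neg C(\W_k)})$. One then has to establish the following:
\begin{enumerate}
\item The chain is Markov, with transitions that genuinely depend on $\psi_k$, and it is irreducible (Proposition~\ref{prop: irred MC}).
\item It is positive recurrent, which is the substantial part (Proposition~\ref{prop:positive-recurrence}). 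This uses the uniform bound $\sup_k\E[\mathcal{R}^{\len(\psi_k)}]<\infty$ of Proposition~\ref{prop: E psi-k uniform finite}, proved via $\len(\psi_k)\le \e_k-T_{k-1}$, where $T_{k-1}$ is the \emph{first} visit to $X_{\e_{k-1}}$, not $\e_{k-1}$.
\item The relevant functional is integrable under the invariant law $\varrho$ (Proposition~\ref{prop:E-varrho-finite}).
\end{enumerate}
Only then does the ergodic theorem give the limit $\E_\varrho[f(\psi_1|_{\neg C(\W_1)})]/\E_\sigma[\e_2-\e_1]$.

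Your Step 3 is salvageable. Use the correct bound $\len(\pi^\ast_n)\le\len(\psi_{\kk(n)+1})$ together with the uniform exponential moment of $\len(\psi_k)$. Then your Markov-inequality/Borel--Cantelli argument does yield $f(\pi^\ast_n)/n\to0$, as an alternative to the ergodic-theorem argument of Lemma~\ref{lem: end to 0}.
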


\begin{remarks}
\begin{enumerate}
\item Observe that Condition (\ref{cond: rest bounded}) is satisfied if, e.g., $f(\pi)$ grows at most polynomially in the length of $\pi$, that is, if there exists a polynomial $q(z)$ such that $\bigl|f(\pi)\bigr|\leq q\bigl(\mathrm{length}(\pi)\bigr)$ for all $\pi\in\Pi$. Hence, Theorem \ref{thm:limit-theorem} is a generalised version of Theorem \ref{thm: intro limit-theorem} in the Introduction.
\item We will present an explicit formula for $\mathfrak{c}$ in Corollary \ref{cor:c-formula}. From this formula follows also $\mathfrak{c}>0$ if $f$ is non-negative and if there exists a path $\pi=[o,u_1,\ldots,u_m]\in\Pi$ with $m\in\N$, $u_m\in V_i^\times$ for some $i\in\mathcal{I}$ and  $u_1,\ldots,u_{m-1}\in V \setminus \bigl(V_j^\times \cup C(u_m)\bigr)\cup\{u_m\}$, $j\in\mathcal{I}\setminus\{i\}$, such that $f\bigl([o,u_1,\ldots,u_m] \bigr)>0$.
\end{enumerate}
\end{remarks}

In Section \ref{sec:applications}, we will present several further theorems which follow as applications of Theorem \ref{thm:limit-theorem}. We will present different generalised drift and range theorems (see Theorems \ref{thm: range}, \ref{thm:r-range}, \ref{thm: weights} and \ref{thm:weight-distance}) as well as a local limit theorem w.r.t. the distance of lamplighter random walks on the free product (see Theorem \ref{thm:LL}). These theorems form new results by themselves and demonstrate the power of Theorem \ref{thm:limit-theorem}.

\section{Local Limit Theorem}
\label{sec:proof-of-LLT}

The aim of this section is to prove Theorem \ref{thm:limit-theorem}. For this purpose,
we construct a Markov chain which is adapted to the sequence $(\e_k)_{k\in \N_0}$ of final cone entry times and which captures the behaviour of $(X_n)_{n\in\N_0}$ within the spheres $\mathcal{S}_k$ and allows us to deduce the behaviour of $f(\pi_k)$. From this process, we will finally deduce the proposed Local Limit Theorem \ref{thm:limit-theorem}.

\subsection{Shift of Paths $\pi_k$}

In the following, we construct ``normalised'' versions of the paths $\pi_k$. For this purpose, we have to introduce further notation: if $X_{\e_k}=g_1\ldots g_k\in V$ for $k\in \N$, then we set
$$
\mathbf{W}_k=g_k,
$$
the last letter of $X_{\e_k}$. For $i\in\mathcal{I}$, denote by $V_i^\ast$ the set of all words $v\in V$ starting with a letter in $V_i^\times$ or being the empty word, that is,
$$
V_i^\ast =\bigl\{ g_1\ldots g_m\in V \,\bigl|\, m\in\N, g_1\in V_i^\times \bigr\} \cup \{o\}.
$$
Observe that $V_i^\ast$ can also be regarded as a cone $C_i=C_i(o):=V_i^\times$.  This allows us to define  
$$
\psi_1=[X_0,\ldots,X_{\e_1}]\Bigl|_{C_{\delta(X_{\e_1})}},
$$
which is obtained from $[X_0,\ldots,X_{\e_1}]$ by removing all subpaths which go into $V\setminus V^\ast_{\delta(X_{\e_1})}$ followed by cancellations of successive occurencies of $o$; that is, if $[X_0,\ldots,X_{\e_1}]=[u_0=o,u_1,\ldots,u_\kappa]$, then 
$$
\psi_1 = [o,y_1,\ldots,y_\lambda],
$$
where 
$$
y_0 := u_{i_0}=o \quad \textrm{ with } i_0:=0
$$
and for $j\geq 1$: 
$$
y_j := u_{i_j} \textrm{ with } i_j:=\inf\bigl\{\ell\in\{1,\ldots,\kappa\} \,\bigl|\, \ell > i_{j-1}, u_\ell \in V_{\delta(X_{\e_1})}^\ast\setminus\{y_{\ell-1}\} \bigr\}.
$$
Completely analogously to Lemma \ref{lem:path-restriction-well-defined}, one can show that $\psi_1$ is a well-defined random path. Observe that $\lambda$ is almost surely finite since $\e_1<\infty$ almost surely.
\par
For $k\geq 2$, we define
$$
\psi_k=X_{\e_{k-1}}^{-1} \Bigl([X_{0},\ldots,X_{\e_k}]\Bigl|_{C(X_{\e_{k-1}})}\Bigr).
$$
With this notation, we have the following link for $k\geq 2$:
$$
X_{\e_{k-1}}^{-1}\pi_k = \psi_k\bigl|_{\neg C(\mathbf{W}_k)},
$$
and shift invariance (\ref{equ:shift-invariance-pi_k}) implies
$$
f(\pi_k)=f\bigl(X_{\e_{k-1}}^{-1}\pi_k\bigr)=f\bigl(\psi_k\bigl|_{\neg C(\mathbf{W}_k)}\bigr)=f\bigl(g(\psi_k)\bigr),
$$
where $g(\mathbf{W}_k,\psi_k):=\psi_k\bigl|_{\neg C(\mathbf{W}_k)}$.
In the following, we show that $\psi_k$ is once again a well-defined path.

\begin{lemma} \label{lem:psi_k is a path}
For each $k\in\N$, $\psi_k$ takes values in $\Pi$. In particular, all vertices in $\psi_k$, which are different from $o$ start with a letter in $V_{\delta(X_{\e_k})}^\times$. Moreover, $\psi_k$ starts in $o$ and  ends with the letter $\mathbf{W}_k$.
\end{lemma}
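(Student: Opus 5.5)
The plan is to treat the cases $k\ge 2$ and $k=1$ separately. In each case the statement splits into three parts: (a) $\psi_k$ is a path, (b) the letter-type property holds, (c) the endpoints are as claimed. All three should follow from Lemma~\ref{lem:path-restriction-well-defined}, Corollary~\ref{cor:shifted-paths} and the word structure of $V$.

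\emph{Case $k\ge 2$.} Write $X_{\e_{k-1}}=g_1\ldots g_{k-1}$ and $X_{\e_k}=g_1\ldots g_k$, so $\mathbf{W}_k=g_k$.
\begin{enumerate}
\item Since $X_{\e_k}\in C(X_{\e_{k-1}})$, the path $[X_0,\ldots,X_{\e_k}]$ meets $C(X_{\e_{k-1}})$. By Lemma~\ref{lem:path-restriction-well-defined}(1), the restriction $\rho:=[X_0,\ldots,X_{\e_k}]\bigl|_{C(X_{\e_{k-1}})}$ is a well-defined path, and by construction all its vertices lie in $C(X_{\e_{k-1}})$.
\item Corollary~\ref{cor:shifted-paths}, applied with $v=X_{\e_{k-1}}$, shows that $\psi_k=X_{\e_{k-1}}^{-1}\rho$ is again a path, so $\psi_k\in\Pi$.
\item For the letter-type property, take a vertex $u$ of $\rho$ and write $u=g_1\ldots g_{k-1}w$. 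If $w\neq o$, the first letter of $w$ cannot lie in the same factor as $g_{k-1}$, because consecutive letters come from different factors. Since $\mathcal I=\{1,2\}$, this first letter lies in the other factor. The other factor is exactly the one containing $g_k$, since $g_{k-1}g_k$ is a reduced word. Hence every vertex of $\psi_k$ other than $o$ starts with a letter in $V^\times_{\delta(X_{\e_k})}$.
\item For the starting point: $\|X_{\e_{k-1}}\|=k-1\ge1$, so $X_0=o\notin C(X_{\e_{k-1}})$. The cone can only be entered through its root, so the first vertex of $\rho$ is $X_{\e_{k-1}}$, and therefore $\psi_k$ starts at $o$.
\item For the endpoint: the last vertex of $[X_0,\ldots,X_{\e_k}]$ is $X_{\e_k}\in C(X_{\e_{k-1}})$, so it is retained in $\rho$. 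Once shifted, it becomes $g_k=\mathbf{W}_k$.
\end{enumerate}

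The one subtle point is whether the removal of successive duplicate roots could delete this final vertex. It cannot, because $X_{\e_k}\neq X_{\e_{k-1}}$, and the duplicate elimination only ever removes repeated occurrences of the root $X_{\e_{k-1}}$.

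\emph{Case $k=1$.} Here $\psi_1$ is the restriction of $[X_0,\ldots,X_{\e_1}]$ to $V^\ast_{i}$ with $i=\delta(X_{\e_1})$.
\begin{enumerate}
\item $V^\ast_i$ is a cone-like set rooted at $o$. Its only connection to the complement is through $o$, because every neighbour of a word $g_1\ldots g_m$ with $g_1\in V_i^\times$ either again starts with $g_1$ or, when $m=1$, is a vertex of the root copy of $\mathcal X_i$, i.e.\ $o$ or a letter of $V_i^\times$.
\item Using this, I would repeat the argument of Lemma~\ref{lem:path-restriction-well-defined} with $C(v)$ replaced by $V^\ast_i$ and $v$ replaced by $o$. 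An excursion outside $V^\ast_i$ both starts and ends at $o$, so after deleting it and the duplicated $o$, consecutive vertices remain adjacent. This shows $\psi_1\in\Pi$.
\item The letter-type property holds by definition of $V^\ast_i$.
\item $\psi_1$ starts at $o$ because $X_0=o$.
\item $\psi_1$ ends at $X_{\e_1}=\mathbf{W}_1\in V_i^\times$. This vertex lies in $V^\ast_i$ and differs from $o$, so it is retained.
\end{enumerate}

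The main obstacle I expect is purely bookkeeping. One has to verify carefully that the duplicate-elimination rule in the restriction, whose indices are written somewhat loosely in the definitions, never removes the first or last required vertex. One also has to check that the only gateway between $V^\ast_i$ and its complement is $o$. I would settle both points by a short case check on the adjacency structure of $\mathcal X$, using that edges only join vertices within a single copy of $\mathcal X_1$ or $\mathcal X_2$.
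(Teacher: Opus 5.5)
Your proof is correct and follows the paper's route for $k\ge 2$. You use Lemma~\ref{lem:path-restriction-well-defined} for the restriction to $C(X_{\e_{k-1}})$, prefix cancellation via Lemma~\ref{lem:cone-probs} (Corollary~\ref{cor:shifted-paths}), and the type argument $\delta(X_{\e_{k-1}})\neq\delta(\mathbf{W}_k)$; the start follows from entry through the cone's root and the end from $X_{\e_{k-1}}^{-1}X_{\e_k}=\mathbf{W}_k$. Your separate treatment of $k=1$, using that $o$ is the only gateway between $V_i^\ast$ and its complement, spells out what the paper only asserts as ``completely analogous'' when defining $\psi_1$.
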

\begin{proof}
Let $k$ be in $\N$ given. Since $\e_k<\infty$ almost surely, Lemma \ref{lem:path-restriction-well-defined} together with Lemma \ref{lem:cone-probs} ensure that $\psi_k$ is indeed a well-defined path.
\par
Since $\delta(X_{\e_{k-1}})\neq \delta(\mathbf{W}_k)$, all vertices in $\psi$ have to start with a letter in $V_{\delta(\mathbf{W}_k)}^\times=V_{\delta(X_{\e_k})}^\times$ or are equal to the empty word $o$. The first vertex of $\psi_k$ has to be $o$ since the first letter in $C(X_{\e_{k-1}})$ of $[X_0,\ldots,X_{\e_k}]$ is $X_{\e_{k-1}}$.
Finally,
by definition of $\psi_k$, the last letter of $\psi_k$ is given by
$$
X_{\e_{k-1}}^{-1} X_{\e_k}=\mathbf{W}_k.
$$
\end{proof}

\subsection{The Process $\bigl((\mathbf{W}_k,\psi_k)\bigr)_{k\in\N}$}
\label{subsec:process-Wk-psik}

The main goal in the following is to show that $\bigl((\mathbf{W}_k,\psi_k)\bigr)_{k\in\N}$ forms a homogeneous, irreducible, positive-recurrent Markov chain on the state space
$$
\mathcal{D}:=\mathrm{supp}\bigl((\mathbf{W}_1,\psi_1)\bigr).
$$
The first lemma shows that all random pairs $(\mathbf{W}_k,\psi_k)$, $k\in\N$, have indeed the same support.
\begin{lemma}\label{lem:support-D}
For all $k\in\N$,
$$
\mathrm{supp}\bigl((\mathbf{W}_k,\psi_k)\bigr)=\mathcal{D}.
$$
\end{lemma}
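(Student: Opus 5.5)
We will prove the two inclusions $\mathrm{supp}\bigl((\mathbf{W}_k,\psi_k)\bigr)\subseteq\mathcal{D}$ and $\mathcal{D}\subseteq\mathrm{supp}\bigl((\mathbf{W}_k,\psi_k)\bigr)$ by writing the relevant events as explicit path events and comparing their probabilities with Lemma \ref{lem:cone-probs}. For $k=1$ there is nothing to prove, so fix $k\ge 2$.

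The first step is to characterise $\mathcal{D}$. A pair $(w,\psi)$ with $w\in V_i^\times$ and $\psi=[o,y_1,\ldots,y_\lambda]$ a path in $V_i^\ast$ ending at $w$ lies in $\mathcal{D}$ if and only if $\P[\mathbf{W}_1=w,\psi_1=\psi]>0$. This event holds exactly when the following happens:
\begin{itemize}
\item the walk traces $\psi$ inside $V_i^\ast$;
\item at each visit to $o$ it may insert finitely many excursions into $V\setminus V_i^\ast$ that return to $o$;
\item after its final arrival at $w$ it never leaves $C(w)$.
\end{itemize}
By the Markov property, this probability factorises as
\[
\P_o\bigl[\text{follow }\psi\text{ with admissible excursions at }o\bigr]\cdot(1-\xi_i).
\]
Here the first factor is a sum of positive path weights. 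The first factor, and hence the pair's membership in $\mathcal{D}$, depends only on the following data:
\begin{itemize}
\item which excursions are allowed. At $o$ the excursions go into $V_j^\ast$ with $j\neq i$, and these are always possible, or need not happen at all.
\item whether $\psi$ itself is realisable as a walk path, which it always is since it is a path.
\end{itemize}
The main point is therefore that $\psi$ is a genuine path from $o$ to $w$ in $V_i^\ast$ whose last visit to $C(w)$ starting from $w$ never leaves $C(w)$, i.e.\ $\psi$ enters $C(w)$ for the last time at its final vertex.

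The second step does the same computation for general $k$. We condition on $X_{\e_{k-1}}=v$ with $\delta(v)=j\neq i$, and decompose the event $\{\mathbf{W}_k=w,\psi_k=\psi\}$ by the strong Markov property at the time the walk first reaches $v$ along a valid prefix. The event requires:
\begin{itemize}
\item the walk, restricted to $C(v)$, traces $v\psi$;
\item its excursions outside $C(v)$, taken from $v$, return to $v$;
\item after reaching $vw$ it stays in $C(vw)$ forever.
\end{itemize}
By Lemma \ref{lem:cone-probs}, the paths inside $C(v)$ have the same probabilities as the shifted paths from $o$. The excursions from $v$ out of $C(v)$ back to $v$ play exactly the role that the excursions from $o$ into $V_j^\ast$ play for $k=1$: both sets of excursions are nonempty-possible and optional.

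Moreover, the event that the walk reaches some $v$ of type $j\neq i$ with $X_{\e_{k-1}}=v$ and ends up in this configuration has positive probability. To see this, take a positive-probability realisation of $\e_1,\ldots,\e_{k-1}$ with $\delta(X_{\e_{k-1}})=j$. Such a realisation exists because letters alternate and every $V_j^\times$ is reachable. We then splice in $v\psi$ followed by staying in $C(vw)$. The splicing is legitimate because staying in $C(vw)$ forever implies staying in $C(v)$, so the earlier final cone entries are not destroyed. This gives $\mathcal{D}\subseteq\mathrm{supp}$. Conversely, any realisation of $(\mathbf{W}_k,\psi_k)=(w,\psi)$ yields, after shifting by $v^{-1}$ and removing the outside excursions, a positive-probability realisation of $(\mathbf{W}_1,\psi_1)=(w,\psi)$, using $1-\xi_i>0$. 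This gives the reverse inclusion.

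The main obstacle is the type constraint: $\mathcal{D}$ contains pairs with $w$ in both $V_1^\times$ and $V_2^\times$, while $\mathbf{W}_k$ must have type different from $\mathbf{W}_{k-1}$. We must therefore check that for each $i$ there is a positive-probability history with $\delta(X_{\e_{k-1}})\neq i$. For this we build a concrete history letter by letter, choosing alternating types from $o$ via reachable letters and using $\xi<1$ to end each final cone entry. Care is also needed to verify that the removal of double occurrences of $v$ in the restriction commutes correctly with the shift.
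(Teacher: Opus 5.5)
Your inclusion $\mathrm{supp}((\mathbf{W}_k,\psi_k))\subseteq\mathcal{D}$ is correct and is essentially the paper's argument. The path $\psi$ lies in $V^\ast_{\delta(w)}$, runs from $o$ to $w$, and its last step enters $C(w)$. So following $\psi$ from $o$ and then staying in $C(w)$, which has probability $1-\xi_{\delta(w)}>0$, realises $(\mathbf{W}_1,\psi_1)=(w,\psi)$. The type constraint you single out is handled exactly as in the paper, by choosing $v$ with $\Vert v\Vert=k-1$ and $\delta(v)\neq\delta(w)$.

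The gap is in the other inclusion, $\mathcal{D}\subseteq\mathrm{supp}((\mathbf{W}_k,\psi_k))$. You splice $v\psi$ after an \emph{arbitrary} positive-probability history realising $\e_1,\ldots,\e_{k-1}$ with $X_{\e_{k-1}}=v$. Your only justification is that $\e_1,\ldots,\e_{k-1}$ are not destroyed, and that does not give $\psi_k=\psi$. By definition, $\psi_k=v^{-1}([X_0,\ldots,X_{\e_k}]|_{C(v)})$ is built from the \emph{whole} trajectory. Hence every piece of the history lying in $C(v)\setminus\{v\}$ before time $\e_{k-1}$ is kept and prepended to $\psi$. Such pieces are compatible with $X_{\e_{k-1}}=v$: the walk may enter $C(v)$, move inside, return to $v$, leave $C(v)$, and re-enter at time $\e_{k-1}$. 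Nothing in your choice of history excludes this. In that case $\psi_k$ is a loop $[o,\ldots,o]$ followed by $\psi$, not $\psi$. You note yourself that the natural decomposition point is the \emph{first} visit to $v$, but the splicing step never uses this.

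What is missing is the requirement that the history meets $C(v)$ only at its final vertex. The paper gets this by taking a \emph{shortest} path $[o,x_1,\ldots,x_m=v]$. Such a path visits $v$ only at its end, and $C(v)$ can only be entered through $v$, so $\{o,x_1,\ldots,x_m\}\cap C(v)=\{v\}$. With this choice:
\begin{itemize}
\item the restriction to $C(v)$ of $[o,x_1,\ldots,x_m,vy_1,\ldots,vy_n]$ is exactly $v\psi$;
\item $\e_{k-1}=m$ and $\e_k=m+n$, the latter because the second-to-last vertex of $\psi$ lies outside $C(w)$;
\item the event has probability at least the path probability times $1-\xi_{\delta(w)}>0$.
\end{itemize}
Your ``letter-by-letter'' history could be arranged to have this property, but you would have to state it and use it. Once that is fixed, your preliminary characterisation of $\mathcal{D}$ via admissible excursions, and the matching of excursions at $o$ with excursions at $v$, are unnecessary: the direct splice already suffices.
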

\begin{proof}
Let be $k\in\N$ with $k\geq 2$.
First, take any $(g,\varphi)\in \mathcal{D}$, where we assume w.l.o.g. that $g\in V_1^\times$ and $\varphi=[o,y_1,\ldots,y_n=g]\in\Pi$. Now choose any $g_0\in V$ with $\Vert g_0\Vert=k-1$ and $\delta(g_0)=2$. Furthermore, choose a shortest path from $o$ to $g_0$, say $[o,x_1,\ldots,x_m=g_0]$. This choice yields
$$
\{o,x_1,\ldots,x_m\}\cap C(g_0)=\{g_0\}.
$$
Now observe that $g_0y_i$, $i\in\{1,\ldots,n\}$, is well-defined since $\delta(g_0)=2$ and each $y_i$ is either the empty word or starts with a letter in $V_1$ by definition of $\psi_1$.
Then the concatenated path
$$
[o,x_1,\ldots,x_m=g_0,g_0y_1,\ldots,g_0y_n=g_0g]
$$
is a path from $o$ to $g_0g$ which allows to generate  \mbox{$(\mathbf{W}_k,\psi_k)=(g,\varphi)$} with positive probability: indeed,
\begin{eqnarray*}
&& \P\bigl[\mathbf{W}_k=g,\psi_k=\varphi\bigr] \\
&\geq & \P\left[\begin{array}{c}
X_1=x_1,\ldots, X_m=g_0,X_{m+1}=g_0y_1,\ldots,X_{m+n}=g_0g,\\
\forall t>m+n: X_t\in C(g_0g)
\end{array}\right]\\
&=& \P\left[ \begin{array}{c}
X_1=x_1,\ldots,X_m=g_0,\\
X_{m+1}=g_0y_1,\ldots X_{m+n}=g_0g \end{array}\right] \cdot
\P_{g_0g}\bigl[\forall t\geq 1: X_t\in C(g_0g)\bigr] \\
&=& \P\left[ \begin{array}{c}
X_1=x_1,\ldots,X_m=g_0,\\
X_{m+1}=g_0y_1,\ldots X_{m+n}=g_0g \end{array}\right] \cdot (1-\xi_1) >0.
\end{eqnarray*}
That is, we have shown that
$(g,\varphi)\in \mathrm{supp}\bigl((\mathbf{W}_k,\psi_k)\bigr)$. The case $g\in V_2^\times$ works completely analogously.
\par
Now let be $(g,\varphi)\in \mathrm{supp}\bigl((\mathbf{W}_k,\psi_k)\bigr)$ for some $k\geq 2$. By definition of $\psi_k$ and Lemma \ref{lem:psi_k is a path}, $\varphi=[o,y_1,\ldots,y_n=g]$ is a path from $o$ to $\mathbf{W}_k=g$. This yields:
\begin{eqnarray*}
&& \P\bigl[\mathbf{W}_1=g,\psi_1=\varphi\bigr] \\
&\geq & \P\bigl[
X_1=y_1,\ldots, X_n=y_n=g,
\forall t>n: X_t\in C(g)\bigr]\\
&\geq & \P\bigl[
X_1=y_1,\ldots, X_n=y_n=g\bigr] \cdot \P_g\bigl[
\forall t\geq 1: X_t\in C(g)\bigr]\\
&\geq & \P\bigl[
X_1=y_1,\ldots, X_n=y_n=g\bigr] \cdot (1-\xi_{\delta(g)})>0.
\end{eqnarray*}
That is, we have shown that $(g,\varphi)\in\mathcal{D}$. This finishes the proof.
\end{proof}

The next proposition will be a crucial element for the proof of our limit theorem.

\begin{prop}\label{prop: irred MC}
$\bigl((\mathbf{W}_k,\psi_k)\bigr)_{k\in\N}$ is a homogeneous, irreducible Markov chain on $\mathcal{D}$.
\end{prop}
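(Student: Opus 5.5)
The plan is to compute the finite-dimensional distributions of $\bigl((\mathbf{W}_k,\psi_k)\bigr)_{k\in\N}$ explicitly and show that they factor as a product of one-step transition probabilities that do not depend on $k$.

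Fix $n\in\N$ and states $(g_1,\varphi_1),\ldots,(g_n,\varphi_n)\in\mathcal{D}$. If the types of consecutive letters $g_j$ alternate, we can form the words $x_j:=g_1\cdots g_j$. The event $\{(\mathbf{W}_j,\psi_j)=(g_j,\varphi_j),\ j\le n\}$ then means the following:
\begin{itemize}
\item the trajectory up to $\e_n$, decomposed along the nested cones $C(x_1)\supset\cdots\supset C(x_n)$, has restriction $x_{j-1}\varphi_j$ onto $C(x_{j-1})$ up to time $\e_j$;
\item after $\e_n$ the walk never leaves $C(x_n)$.
\end{itemize}
I would show that this event has probability
$$
\P\bigl[\psi_1\text{-type path }\varphi_1\text{ from }o\text{ to }g_1\bigr]\cdot\prod_{j=2}^{n} q(\varphi_{j}\mid g_{j-1})\cdot (1-\xi_{\delta(g_n)}).
$$
Here $\P[\ldots]$ stands for the appropriate path weight of $\varphi_1$, including excursions into $V\setminus V^\ast_{\delta(g_1)}$. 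The factor $q(\varphi_j\mid g_{j-1})$ is the weight of the path $\varphi_j$, shifted by any word of type $\delta(g_{j-1})$. It is built from step probabilities of $\varphi_j$ and from the generating weight of excursions from $o$ into $V\setminus V^\ast_{\delta(\varphi_j)}$ that return to $o$; these excursions are the parts removed by the restriction.

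To make this precise, I would decompose every trajectory realising the event by its successive visits to $x_{j-1}$, noting that $C(x_{j-1})$ is entered and left only through $x_{j-1}$. The probability then factorises by the strong Markov property. By Lemma~\ref{lem:cone-probs}, each factor depends only on the relative path $\varphi_j$ and on the type $\delta(g_{j-1})$, not on the prefix $x_{j-1}$.

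One point needs care. The excursions from $x_{j-1}$ leaving $C(x_{j-1})$ return to $x_{j-1}$. They do not contribute to $\psi_j$, and they must be counted in the factor for level $j-1$ (as loops of $\psi_{j-1}$ ending at $g_{j-1}$) rather than at level $j$. I would set this bookkeeping up consistently, for instance by assigning all returns to $x_{j-1}$ from outside to $\pi_{j-1}$. The constraint "never leave $C(x_n)$ after $\e_n$" contributes the factor $1-\xi_{\delta(g_n)}$. Ratios then give
$$
\P\bigl[(\mathbf{W}_{n+1},\psi_{n+1})=(g,\varphi)\mid (\mathbf{W}_j,\psi_j)_{j\le n}\bigr]=\frac{q(\varphi\mid g_n)\,(1-\xi_{\delta(g)})}{1-\xi_{\delta(g_n)}}\cdot\mathbf 1_{\delta(g)\neq\delta(g_n)}.
$$
This depends only on $(g_n,\varphi_n)$, and in fact only on $\delta(g_n)$, and not on $n$. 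That gives the Markov property and homogeneity. Summing over all $(g,\varphi)$ yields $1$, because it equals the probability that a walk started at a type-$\delta(g_n)$ vertex that stays in its cone has a first final cone entry.

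Irreducibility then follows directly. From any state $(g',\varphi')$, the transition probability to $(g,\varphi)$ is positive whenever $\delta(g)\neq\delta(g')$, by the positivity argument of Lemma~\ref{lem:support-D}. So any state of type $1$ reaches any state of type $2$ in one step, and reaches any state of type $1$ in two steps via an intermediate type-$2$ state.

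The main obstacle is the bookkeeping in the factorisation. I have to check that removing excursions via the restriction maps $\,\cdot|_{C(\cdot)}$ and $\,\cdot|_{\neg C(\cdot)}$ gives a bijective decomposition of trajectories. I also have to check that the "last entry" condition (no return out of $C(x_j)$ after $\e_j$) is captured exactly by the terminal factor $1-\xi$, together with the requirement that within level $j$ the restricted path leaves $C(x_j)$ before its final entry.
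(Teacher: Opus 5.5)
There is a genuine gap. Your factorisation treats $\varphi_1,\ldots,\varphi_n$ as records of disjoint pieces of the trajectory, but consecutive $\psi_k$ overlap.

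By definition, $\psi_k$ is the shifted restriction onto $C(X_{\e_{k-1}})$ of the \emph{whole} path $[X_0,\ldots,X_{\e_k}]$. So it records every visit to $C(X_{\e_k})\subset C(X_{\e_{k-1}})$ made \emph{before} the final entry time $\e_k$. These same visits form the initial segment of $\psi_{k+1}$. Indeed, $\psi_{k+1}$ is $\W_k^{-1}(\psi_k|_{C(\W_k)})$ followed by the shifted path from time $\e_k$ to $\e_{k+1}$, and only the latter part carries new randomness. The bookkeeping issue you flag concerns excursions \emph{leaving} $C(x_{j-1})$, which is harmless. The problematic excursions are those \emph{into} $C(x_j)$ before $\e_j$, which belong to both $\psi_j$ and $\psi_{j+1}$.

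Consequently, your product formula counts the step probabilities of these shared segments twice. It also has no compatibility constraint between $\varphi_n$ and $\varphi_{n+1}$, so the claimed kernel depending only on $\delta(g_n)$ is false. Concretely, take the graph of Example~\ref{example:free-product} and the trajectory $o,a,ab,ac,a,o,a,ab$, followed by staying in $C(ab)$. This gives $\e_1=6$, $\e_2=7$, $(\W_1,\psi_1)=(a,[o,a,ab,ac,a,o,a])$ and $\psi_2=[o,b,c,o,b]$. Given this value of $(\W_1,\psi_1)$, every realisation has $\psi_2$ beginning with $[o,b,c,o]$. Hence the transition to $(b,[o,b])\in\mathcal{D}$ has probability $0$, even though $\delta(b)\neq\delta(a)$ and your formula assigns it a positive value.

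For the same reason, your irreducibility argument fails: states of the opposite type are \emph{not} all reachable in one step. The correct kernel must depend on $(g_k,\varphi_k)$ through a compatibility condition. The continuation after $\e_k$, prefixed by $g_k^{-1}(\varphi_k|_{C(g_k)})$, must reproduce $\varphi_{k+1}$. This is exactly what the paper's sets $\hat\Pi_m(\varphi_k,\varphi_{k+1},g_{k+1})$ encode.

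The Markov property still holds, because $\psi_k$ contains the entire past inside $C(X_{\e_{k-1}})\supset C(X_{\e_k})$, so nothing earlier can influence $\psi_{k+1}$. The future after $\e_k$ is then handled as you intended, via Lemma~\ref{lem:cone-probs} and the factor $1-\xi_{\delta(g_{k+1})}$.

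Irreducibility needs a multi-step route through a state without memory:
\begin{enumerate}
\item Start from $(g_0,\varphi_0)$ and let the walk step straight outward after each final entry. The inherited piece then shrinks.
\item After finitely many steps this reaches a state $(g,[o,g])$ with $\delta(g)\neq\delta(g_1)$.
\item From such a state the next $\psi$ consists of new randomness only. So $(g_1,\varphi_1)$ is reached in one step by letting the walk trace $X_{\e_k}\varphi_1$ and then stay in the last cone.
\end{enumerate}
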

\begin{proof}
First, we show the Markov property. For this purpose, let be $k\in\N_0$ and $(g_1,\varphi_1),\ldots,(g_{k+1},\varphi_{k+1})\in\mathcal{D}$ with 
$$
\P\Bigl[(\mathbf{W}_1,\psi_1)=(g_1,\varphi_1),\ldots, (\mathbf{W}_k,\psi_k)=(g_k,\varphi_k)\Bigr]>0.
$$
For $n\in\N$, we denote by $\Pi_n$ the set of all paths $[o,x_1,\ldots,x_n]\in\Pi$ such that
$$
\left[ \begin{array}{c}
X_1=x_1,\ldots,X_n=x_n,\\
\forall t>n: X_t\in C(x_n)
\end{array}
\right] \cap [\e_k=n] \cap\bigcap_{j=1}^k \bigl[\mathbf{W}_j=g_j,\psi_j=\varphi_j \bigr]\neq\emptyset,
$$
that is, the set of all paths which allow to generate $X_{\e_k}=g_1\ldots g_k$ at time $n$ such that $\mathbf{W}_j=g_j$ and $\psi_j=\varphi_j$ for all $j\in\{1,\ldots,k\}$. Moreover, for $m\in\N$, we write $\hat\Pi_m(\varphi_k,\varphi_{k+1},g_{k+1})$ for the set of all paths $[o,y_1,\ldots,y_m=g_{k+1}]\in \Pi$ such that
$$
\left[ \begin{array}{c}
\exists n\in\N \, \exists g_0\in V: \Vert g_0\Vert=k, X_{n-1}\notin C(g_0), \\
X_n=g_0,
X_{n+1}=g_0y_1,\ldots,X_{n+m}=g_0y_m,\\
\forall t>n+m: X_t\in C(g_0y_m)
\end{array} 
\right]\cap \left[\begin{array}{c}
 \e_{k+1}-\e_k=m,\\
\psi_k=\varphi_k,\\
\psi_{k+1}=\varphi_{k+1}\end{array}\right]
\neq \emptyset.
$$
With this notation, we obtain:
\begin{eqnarray*}
    && \P\Bigl[(\mathbf{W}_1,\psi_1)=(g_1,\varphi_1),\ldots, (\mathbf{W}_{k+1},\psi_{k+1})=(g_{k+1},\varphi_{k+1})\Bigr]\\
&=& \sum_{\substack{n\in\N,\\ [o,x_1,\ldots,x_n]\in \Pi_n}} \P\bigl[X_1=x_1,\ldots,X_n=x_n\bigr] \\
&&\quad \cdot \sum_{\substack{m\in\N,\\ [o,y_1,\ldots,y_m]\in \hat\Pi_m(\varphi_k,\varphi_{k+1},g_{k+1})}} \P\left[ \begin{array}{c} X_{n+1}=x_ny_1,\\
\vdots \\
X_{n+m}=x_ny_m \end{array} \middle|\, X_n=x_n\right]\\
&&\quad \cdot \P\Bigl[\forall t\geq 1: X_t\in C(x_ny_m)\,\Bigl|\, X_{n+m}=x_ny_m\Bigr] \\[1ex]
&\stackrel{\textrm{Lemma \ref{lem:cone-probs}}}{=}& \sum_{\substack{n\in\N,\\ [o,x_1,\ldots,x_n]\in \Pi_n}} \P\bigl[X_1=x_1,\ldots,X_n=x_n\bigr] \\
&&\quad \cdot \sum_{\substack{m\in\N,\\ [o,y_1,\ldots,y_m]\in \hat\Pi_m(\varphi_k,\varphi_{k+1},g_{k+1})}} \P\left[\begin{array}{c} X_{1}=y_1,\\ \vdots\\ X_{m}=y_m\end{array}\right] \cdot (1-\xi_{\delta(g_{k+1})}).
\end{eqnarray*}
Completely analogously, we obtain:
\begin{eqnarray*}
   && \P\Bigl[(\mathbf{W}_1,\psi_1)=(g_1,\varphi_1),\ldots, (\mathbf{W}_{k},\psi_{k})=(g_{k},\varphi_{k})\Bigr]\\
&=& \sum_{\substack{n\in\N,\\ [o,x_1,\ldots,x_n]\in \Pi_n}} \P\bigl[X_1=x_1,\ldots,X_n=x_n\bigr]\cdot (1-\xi_{\delta(g_{k})}).
\end{eqnarray*}
Therefore,
\begin{eqnarray}\label{eq: cond prob expressed with Xi}
   && \P\left[(\mathbf{W}_{k+1},\psi_{k+1})=(g_{k+1},\varphi_{k+1}) \,\middle| \, \begin{array}{c} (\mathbf{W}_1,\psi_1)=(g_1,\varphi_1),\\ \vdots \\ (\mathbf{W}_{k},\psi_{k})=(g_{k},\varphi_{k})\end{array}\right]
   \nonumber
   \\
&=& \frac{1-\xi_{\delta(g_{k+1})}}{1-\xi_{\delta(g_{k})}} \cdot 
    \sum_{\substack{m\in\N,\\ [o,y_1,\ldots,y_m]\in \hat\Pi_m(\varphi_k,\varphi_{k+1},g_{k+1})}} \P\left[ \begin{array}{c}
    X_{1}=y_1,\\
    \vdots \\
    X_{m}=y_m\end{array}\right].
    \label{equ:transition-probabilities-Wk-psik}
\end{eqnarray}
Since the right hand side depends only on $g_k,g_{k+1},\varphi_k$ and $\varphi_{k+1}$, we have shown that $\bigl((\mathbf{W}_k,\psi_k)\bigr)_{k\in\N}$ forms a homogeneous Markov chain.
\par
It remains to prove irreducibility of the Markov chain $\bigl((\mathbf{W}_k,\psi_k)\bigr)_{k\in\N}$. For this purpose, let be $(g_0,\varphi_0),(g_1,\varphi_1)\in \mathcal{D}$, and denote by $n$ the length of $\varphi_0$. Then the Markov chain $\bigl((\mathbf{W}_k,\psi_k)\bigr)_{k\in\N}$ starting at $(g_0,\varphi_0)$ can reach some state $(g,\varphi)\in\mathcal{D}$, where
$\delta(g)\neq \delta(g_1)$ and $\varphi=[o,g]$, 
with positive probability in at most $n+1$ steps; this can be achieved if $(X_n)_{n\in\N_0}$ starting at some vertex ending with letter $g_0$ moves away from $o$ by increasing the word length in each step. After having reached $(g,\varphi)$ the Markov chain $\bigl((\mathbf{W}_k,\psi_k)\bigr)_{k\in\N}$ can reach $(g_1,\varphi_1)$ in the next step if $(X_n)_{n\in\N_0}$ follows now the path $g\varphi_1$, which has positive probability to be performed. This shows that $\bigl((\mathbf{W}_k,\psi_k)\bigr)_{k\in\N}$ is irreducible.
\end{proof}

Recall that $\len(\pi)$ denotes the path length of  $\pi\in \Pi$. 
The following proposition will be an important key ingredient later concerning some required integrability property of the involved cone-additive functions.

\begin{prop}\label{prop: E psi-k uniform finite}
    There exists a real number $\mathcal{R}>1$ such that
    $$
    \sup_{k\in\N} \mathbb{E}\bigl[\mathcal{R}^{\mathrm{length}(\psi_k)}\bigr]<\infty.
    $$
\end{prop}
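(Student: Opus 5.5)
The plan is to reduce the statement to exponential moments of the time increments $\e_k-\e_{k-1}$. By construction, $\psi_k$ is obtained from $[X_{\e_{k-1}},\ldots,X_{\e_k}]$ by deleting excursions and shifting. For $k\geq 2$ we also have to account for the time before $\e_{k-1}$ that the walk spends inside $C(X_{\e_{k-1}})$ and then returns from. Since $\psi_k$ is $[X_0,\ldots,X_{\e_k}]|_{C(X_{\e_{k-1}})}$ shifted, its length is at most $\e_k-\tau_{k-1}$. Here $\tau_{k-1}$ denotes the first visit to $X_{\e_{k-1}}$, i.e.\ the first entry into the cone $C(X_{\e_{k-1}})$. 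It therefore suffices to find $\mathcal{R}>1$ with
$$
\sup_{k}\mathbb{E}\bigl[\mathcal{R}^{\e_k-\tau_{k-1}}\bigr]<\infty ,
$$
and this bound should follow from the first-passage generating functions having radius of convergence larger than $1$.

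To bound this quantity, I would decompose according to the vertex $v=X_{\e_{k-1}}$ with $\Vert v\Vert=k-1$, the first hitting time of $v$, and the subsequent path up to $\e_k$. By the Markov property at the first visit to $v$ and Lemma~\ref{lem:cone-probs}, the path from $v$ onward, restricted to $C(v)$ and shifted by $v^{-1}$, has the law of a walk started at $o$ that is conditioned (through the weight $1-\xi$) to eventually stay in one factor cone $V_i^\ast$. Excursions outside $C(v)$ are deleted and do not contribute to the length. The law of this piece is therefore essentially that of $\psi_1$ for the walk restricted to a one-sided cone, up to the normalising factor $(1-\xi_{\delta(v)})^{-1}$. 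Summing over $v$ collapses the total probability to $1$, because the events for different $v$ are disjoint. This yields a bound uniform in $k$:
$$
\mathbb{E}\bigl[\mathcal{R}^{\mathrm{length}(\psi_k)}\bigr]\le \frac{1}{\min_i(1-\xi_i)}\,\sum_{m}\sum_{\varphi} \P_o\bigl[\text{walk follows }\varphi\text{ inside } V_i^\ast\bigr]\mathcal{R}^{m}.
$$
Here the inner sum runs over paths $\varphi$ in $V_i^\ast$ from $o$ of length $m$, weighted by the final behaviour. I expect that $\min_i(1-\xi_i)>0$ holds by the cited result $\xi_i<1$. The case $k=1$ is handled directly, since $\mathrm{length}(\psi_1)\le \e_1$.

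The remaining task is the generating-function bound: show that $\sum_{n}\P[\e_1=n]\,z^n$ (and its cone-restricted analogue) has radius of convergence strictly greater than $1$. To do this, I would write $[\e_1=n]$ as a sum over paths from $o$ to a first-level vertex $g$ at time $n$, followed by the event of never leaving $C(g)$. The generating function of such paths is bounded by $\sum_{g}G(o,g|z)$ restricted to paths avoiding a return to $g$ after leaving $C(g)$. Via the standard free-product decompositions, this is expressed through $G(o,o|z)$ and the first-passage functions $F(g,o|z)$, $F_i(\cdot,\cdot|z)$. These are all analytic for $|z|<\mathscr{R}$, where $\mathscr{R}>1$ is the radius of $G(o,o|z)$ from the basic assumption $\varrho<1$.

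I expect the main obstacle to be exactly this step: controlling the sum over infinitely many first-level vertices $g\in V_1^\times\cup V_2^\times$, which may be countable, with their return excursions at some $z>1$. I would try the estimate $\sum_{n}\P[\e_1=n]z^n\le G(o,o|z)\cdot\sum_i\sum_{g\in V_i^\times}\alpha_i\,(\text{first-entry generating function to }g\text{ without returning to }o)$. The last factor is bounded by a function analytic beyond $1$, since summing over $g$ just gives the generating function of the time spent in one factor before returning, and that is dominated by $G(o,o|z)$ again. Any $\mathcal{R}\in(1,\mathscr{R})$ small enough then works.
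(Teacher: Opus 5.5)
Your skeleton is the paper's: bound $\mathrm{length}(\psi_k)$ by $\e_k-\tau_{k-1}$, decompose over $v=X_{\e_{k-1}}$ and its first hitting time, and collapse the sum over $v$ with the weight $1-\xi_{\delta(v)}$. However, both estimates that carry the proof are either missing or wrongly argued.

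The first gap is the stretch from $\tau_{k-1}$ to $\e_{k-1}$. After reducing to $\e_k-\tau_{k-1}$, you say that excursions outside $C(v)$ ``are deleted and do not contribute''. These two statements are inconsistent.

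If you keep the reduction, these returning excursions count. You then need a bound, uniform in $v$, on the generating function of paths from $v$ back to $v$ at some $z>1$. The paper splits $\e_k-\tau_{k-1}=(\e_{k-1}-\tau_{k-1})+(\e_k-\e_{k-1})$ and controls the first part by $\sup_{x\in V}G(x,x|z)<\infty$, imported from Lemma~3.6 of \cite{gilch:22}. This is not automatic, because the outside excursions depend on the last letter of $v$, which may range over an infinite set.

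If instead you work with the restricted path itself, its law is not that of the walk started at $o$. Each departure from $v$ into $C(v)\setminus\{v\}$ may be preceded by any number of collapsed excursions out of $C(v)$ that return to $v$. This multiplies its probability by $(1-U_{\mathrm{out}}(v))^{-1}$, where $U_{\mathrm{out}}(v)$ is the probability of stepping out of $C(v)$ and coming back. So your displayed inequality with $\P_o[\cdot]$ does not hold as stated. It can be repaired using $U_{\mathrm{out}}(v)\le\alpha_{\delta(v)}$, but that needs its own argument.

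The second gap, which is the real content, is the estimate in your last paragraph. The time $\e_1$ (and likewise $\e_k$) is a \emph{last} entry into the final cone, not a first passage to $g$. The walk may visit $g$, leave $C(g)$, wander through other first-level cones and come back, so first-entry generating functions do not bound it. Nor is the sum over the possibly infinitely many first-level vertices ``dominated by $G(o,o|z)$ again''. The paths in question do not return to $o$, and with positive probability the walk never does, so no return-time generating function controls them.

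What is needed is that $\gamma_{i,j}(z)$ has radius of convergence $>1$. This is the generating function of the times at which the walk, never visiting $V_i^\times$, enters some $y\in V_j^\times$ from outside $C(y)$, summed over all such $y$. The paper takes this from \cite{gilch:07}, and after shifting by $X_{\e_{k-1}}^{-1}$ it controls $\e_k-\e_{k-1}$ for every $k$.

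If you want a direct proof, the missing idea is a geometric-trials bound:
\begin{enumerate}
\item Split such a path into moves inside the first-level copy of $\mathcal{X}_j$ and excursions from $y$ into $C(y)\setminus\{y\}$ that return to $y$.
\item By Lemma~\ref{lem:cone-probs}, the excursions have a generating function $U_i(z)$ independent of $y$, with $U_i(1)\le\alpha_i\xi_i$ and radius at least $\mathscr{R}$.
\item Hence each first-level move carries weight at most $\alpha_j z/(1-U_i(z))$.
\item Since $\xi_i<1$ gives $\alpha_j<1-\alpha_i\xi_i$, this weight is $<1$ for $z$ slightly above $1$, and a geometric series bounds $\gamma_{i,j}(z)$.
\end{enumerate}
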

\begin{proof}
Define  for $i\in\mathcal{I}$, $y\in V_j$ with $j\in\mathcal{I}\setminus\{i\}$ and $n\in\N$
$$
k_i^{(n)}(o,y):=\P\Bigl[\forall \ell\in\{1,\ldots,n\}: X_\ell \notin V_i^\times, X_{n-1}\notin C(y), X_n=y\Bigr].
$$
and set
$$
\gamma_{i,j}(z):=\sum_{n\in\N, y\in V_j^\times} k_i^{(n)}(o,y)\cdot z^n, \ z\in\mathbb{C}.
$$
From \cite[Proof of Proposition 3.2]{gilch:07} follows that $\gamma_{i,j}(z)$ has radius of convergence strictly bigger than $1$.
\par
For better readability, we write, for $x\in V\setminus\{o\}$ with  $\delta(x)=i\in\mathcal{I}$,  $\neg\delta(x):=j$, where $j\in\mathcal{I}, j\neq i$.
\par
Now let be $k\in\N$ and denote by $T_{k-1}$ the random time of the first visit of $X_{\e_{k-1}}$. Then
$$
\mathrm{length}(\psi_k) \leq \e_k-T_{k-1} = (\e_k-\e_{k-1})+(\e_{k-1}-T_{k-1}).
$$
We now make a decomposition of all possible paths w.r.t. the values of $X_{\e_{k-1}}$, $X_{\e_{k}}$, $T_{k-1}$, $\e_{k-1}$ and $\e_k$. For this purpose, denote  by 
$$
D_{j}=\bigl\{x\in V\,\bigl|\, \Vert x\Vert =j\bigr\}
$$ 
the support of $X_{\e_{j}}$ for $j\in\N$. Then, for real $z>0$,
\begin{eqnarray*}
&& \mathbb{E}\bigl[z^{\mathrm{length}(\psi_k)}\bigr] \leq \mathbb{E}\bigl[z^{(\e_k-\e_{k-1})+(\e_{k-1}-T_{k-1})}\bigr] \\
&=& \sum_{\substack{m,n_1,n_2\in\N_0,\\ x\in D_{k-1},\\ y\in D_k\cap C(x),\\ x_1,\ldots, x_{n_1}\in V:\\ x_{n_1-1}\notin C(x),x_{n_1}=x,\\
y_1,\ldots,y_{n_2}\in C(x):\\ y_{n_2-1}\notin C(y), y_{n_2=y}}} 
\P\left[ \begin{array}{c} 
\forall j<m: x_j\notin C(x),\\
X_m=x, X_{m+1}=x_1,\\
\vdots \\ 
X_{m+n_1}=x_{n_1},\\
X_{m+n_1+1}=y_1,\\
\vdots \\ X_{m+n_1+n_2}=y_{n_2},\\
\forall k>m+n_1+n_2: X_k\in C(y)
\end{array}\right]\cdot z^{n_1+n_2}.
\end{eqnarray*}
Using the Markov property, we can decompose the above probabilities as follows:
\begin{eqnarray*}
&& \mathbb{E}\bigl[z^{\mathrm{length}(\psi_k)}\bigr]\\
&\leq & \sum_{\substack{m\in \N,\\ x\in D_{k-1}}} \P\Bigl[ X_m=x,\forall j<m: x_j\notin C(x)\Bigr]\\
&&\quad \cdot \sum_{\substack{n_1\in\N_0,\\ x_1,\ldots, x_{n_1}\in V:\\ x_{n_1-1}\notin C(x),x_{n_1}=x}} \P_x\Bigl[X_{m+1}=x_1,\ldots, X_{m+n_1}=x_{n_1}\Bigr]\cdot z^{n_1}\\
&&\quad \cdot \sum_{\substack{n_2\in\N,\\ y\in D_k\cap C(x),\\ y_1,\ldots,y_{n_2}\in C(x):\\ y_{n_2-1}\notin C(y), y_{n_2=y}}} \P_x\left[\begin{array}{c} X_{m+n_1+1}=y_1,\\ \vdots \\ X_{m+n_1+n_2}=y_{n_2}\end{array}\right]\cdot z^{n_2}\\
&&\quad \cdot \underbrace{\P_y\bigl[\forall t\geq 1: X_t\in C(y)\bigr]}_{=1-\xi_{\neg \delta(x)}}\\
&\stackrel{\textrm{Lemma \ref{lem:cone-probs}}}{\leq} & \underbrace{\sum_{\substack{m\in \N,\\ x\in D_{k-1}}} \P\Bigl[ X_m=x,\forall j<m: x_j\notin C(x)\Bigr]\cdot (1-\xi_{\delta(x)})}_{\leq \sum_{x\in D_{k-1}} \P[X_{\e_{k-1}}=x]=\P[\e_{k-1}<\infty]=1}\cdot \frac{1-\xi_{\neg\delta(x)}}{1-\xi_{\delta(x)}}\\
&&\quad \cdot \sum_{\substack{n_1\in\N_0,\\ x_1\ldots,x_{n-1}\in V}} \P_x\bigl[X_1=x_1,\ldots,X_{n-1}=x_{n-1},X_n=x]\cdot z^{n_1}\\
&&\quad \cdot \gamma_{\delta(x),\neg \delta(x)}(z) \\[1ex]
&\leq & \max\biggl\{\frac{1-\xi_1}{1-\xi_2},\frac{1-\xi_2}{1-\xi_1} \biggr\} \cdot \sup_{x\in V} G(x,x|z) \cdot \max\bigl\{ \gamma_{1,2}(z),\gamma_{2,1}(z)\bigr\}.
\end{eqnarray*}
Recall that $\xi_1,\xi_2\in (0,1)$.
By \cite[Lemma 3.6]{gilch:22}, there exists $R_0\in(1,\infty)$ such that 
$$
\sup_{x\in V} G(x,x|R_0)<\infty.
$$
By the remarks at the beginning of the proof, both $\gamma_{1,2}(z)$ and $\gamma_{2,1}(z)$ have also radii of convergence bigger than $1$. Therefore, there exists $\mathcal{R}>1$ independent of $k$ such that
$$
\mathbb{E}\bigl[\mathcal{R}^{\mathrm{length}(\psi_k)}\bigr] \leq \max_{i,j\in\mathcal{I}}\frac{1-\xi_i}{1-\xi_j} \cdot \sup_{x\in V} G(x,x|\mathcal{R}) \cdot \max\bigl\{ \gamma_{1,2}(\mathcal{R}),\gamma_{2,1}(\mathcal{R})\bigr\} <\infty,
$$
which proves the claim.
\end{proof}

An immediate consequence of the last proposition is that
$$
\sup_{k\in\N} \mathbb{E}\bigl[\mathrm{length}(\psi_k)\bigr]<\infty.
     $$
This statement also holds for conditional expectations:
\begin{cor}\label{cor: cond exp unif finite}
     For  every $(g,\varphi)\in \D$, 
     $$ 
     \sup_{k\in \N}\E\Bigl[\len(\psi_k) \,\Bigl|\,  (\W_1, \psi_1)=(g,\varphi)\Bigr] <\infty.
     $$ 
\end{cor}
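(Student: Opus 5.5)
The plan is to pass from the unconditional bound of Proposition~\ref{prop: E psi-k uniform finite} to the conditional one. The conditioning event has positive probability, so dividing by it is the natural move. Fix $(g,\varphi)\in\D$. By definition of $\D=\supp\bigl((\W_1,\psi_1)\bigr)$ we have $p:=\P\bigl[(\W_1,\psi_1)=(g,\varphi)\bigr]>0$. Since $\len(\psi_k)\geq 0$, for every $k\in\N$ we get
\begin{equation*}
\E\Bigl[\len(\psi_k)\,\Bigl|\,(\W_1,\psi_1)=(g,\varphi)\Bigr]
=\frac{\E\bigl[\len(\psi_k)\,\mathds{1}_{[(\W_1,\psi_1)=(g,\varphi)]}\bigr]}{p}
\leq \frac{\E\bigl[\len(\psi_k)\bigr]}{p}.
\end{equation*}

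Next I bound the unconditional expectation uniformly in $k$. Let $\mathcal{R}>1$ be as in Proposition~\ref{prop: E psi-k uniform finite}. The function $t\mapsto \mathcal{R}^t/t$ on $(0,\infty)$ attains a positive minimum, so there is a constant $c_{\mathcal{R}}\in(0,\infty)$ with
\begin{equation*}
t\leq c_{\mathcal{R}}\,\mathcal{R}^t \quad\textrm{for all } t\geq 0.
\end{equation*}
A concrete choice is $c_{\mathcal{R}}=1/(e\log\mathcal{R})$. Applying this with $t=\len(\psi_k)$ gives
\begin{equation*}
\sup_{k\in\N}\E\bigl[\len(\psi_k)\bigr]\leq c_{\mathcal{R}}\,\sup_{k\in\N}\E\bigl[\mathcal{R}^{\len(\psi_k)}\bigr]<\infty.
\end{equation*}

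Combining the two displays, the supremum over $k$ of the conditional expectation is at most $c_{\mathcal{R}}\,p^{-1}\sup_k\E\bigl[\mathcal{R}^{\len(\psi_k)}\bigr]$. This is finite, and the bound depends on $(g,\varphi)$ only through $p$.

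There is no real obstacle in this argument. The one point to check is that $p>0$, which holds by the very definition of $\D$ as the support of $(\W_1,\psi_1)$. For $k=1$ the statement is trivial, since $\len(\psi_1)=\len(\varphi)$ on the conditioning event.
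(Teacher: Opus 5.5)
Your argument is correct and is essentially the paper's proof. Both bound $\P[(\W_1,\psi_1)=(g,\varphi)]\cdot\E[\len(\psi_k)\mid(\W_1,\psi_1)=(g,\varphi)]$ by $\E[\len(\psi_k)]$ and then use the uniform bound from Proposition~\ref{prop: E psi-k uniform finite}; you write it as a direct estimate instead of a contradiction, and you make explicit the step $\sup_k\E[\len(\psi_k)]<\infty$ via $t\le \mathcal{R}^t/(e\log\mathcal{R})$, which the paper only calls an immediate consequence.
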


\begin{proof}
Let be $(g,\varphi)\in \D=\mathrm{supp}\bigl((\W_1,\psi_1)\bigr)$.
Then:
\begin{eqnarray*}
&& \E\bigl[\len(\psi_k)\bigr] \\
&\geq & \P\bigl[(\W_1,\psi_1)=(g,\varphi)\bigr]\cdot 
\E\Bigl[\len(\psi_k) \,\Bigl|\,  (\W_1, \psi_1)=(g,\varphi)\Bigr]>0.
\end{eqnarray*}
Therefore, $\E\bigl[\len(\psi_k) \,\bigl|\,  (\W_1, \psi_1)=(g,\varphi)\bigr]$ must be bounded in $k$, because otherwise $\E\bigl[\len(\psi_k)\bigr]$ would be unbounded in $k$, a contradiction to Proposition \ref{prop: E psi-k uniform finite}. 
This proves the claim.  
\end{proof}
Now we can  prove:
\begin{prop} \label{prop:positive-recurrence}
The Markov chain
$\left((\mathbf{W}_k,\psi_k)\right)_{k\in\N}$ is positive-recurrent
\end{prop}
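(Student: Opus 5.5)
We prove positive recurrence of $\bigl((\mathbf{W}_k,\psi_k)\bigr)_{k\in\N}$ by showing that the chain does not escape to infinity: its one-dimensional marginals form a tight family. By Proposition \ref{prop: irred MC} the chain is homogeneous and irreducible on the countable state space $\D$. For such a chain, null recurrence or transience forces $\P\bigl[(\W_k,\psi_k)=(g,\varphi)\mid (\W_1,\psi_1)=(g_0,\varphi_0)\bigr]\to 0$ as $k\to\infty$ for every pair of states. So it suffices to exclude this decay.

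\textbf{Step 1: tightness from a uniform length bound.} Fix a starting state $(g_0,\varphi_0)\in\D$. By Corollary \ref{cor: cond exp unif finite} there is $M<\infty$ with
$$\E\bigl[\len(\psi_k)\mid (\W_1,\psi_1)=(g_0,\varphi_0)\bigr]\le M \quad\text{for all } k.$$
Markov's inequality then gives, for every $L\in\N$ and every $k$,
$$\P\bigl[\len(\psi_k)>L\mid (\W_1,\psi_1)=(g_0,\varphi_0)\bigr]\le M/L.$$

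\textbf{Step 2: reduce to a finite set of states.} Let $\D_L:=\{(g,\varphi)\in\D \mid \len(\varphi)\le L\}$. This set is in general \emph{not} finite, since $V_1,V_2$ may be countable and a single step can lead to infinitely many vertices. We handle this with a second truncation. The marginal law of $\psi_k$ given the start is dominated by a fixed summable object, obtained by repeating the decomposition in the proof of Proposition \ref{prop: E psi-k uniform finite}. Each term there factorises into $(1-\xi)$-ratios, a sum of path probabilities of shifted excursions starting at $o$, and an exit-path term. All of these are path probabilities starting from $o$ and do not depend on $k$.

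Concretely, we want to show: for every $\varepsilon>0$ there is a finite $F\subset\D$ such that, for all $k$,
$$\P\bigl[(\W_k,\psi_k)\notin F\mid (\W_1,\psi_1)=(g_0,\varphi_0)\bigr]<\varepsilon.$$
The route is to bound $\P[\psi_k=\varphi]\le c\cdot \P[\text{walk from } o \text{ follows } \varphi]$, with $c$ independent of $k$. After taking $L$ large, we choose $F$ among paths of length $\le L$ carrying all but $\varepsilon$ of this finite total mass, namely $\sum_{\len(\varphi)\le L}\P_o[\text{follow }\varphi]<\infty$. The bound without conditioning transfers to the conditional one by dividing by $\P[(\W_1,\psi_1)=(g_0,\varphi_0)]>0$, exactly as in Corollary \ref{cor: cond exp unif finite}.

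\textbf{Step 3: conclude.} With $\varepsilon=1/2$, for every $k$ we get
$$\sum_{(g,\varphi)\in F}\P\bigl[(\W_k,\psi_k)=(g,\varphi)\mid (\W_1,\psi_1)=(g_0,\varphi_0)\bigr]\ge 1/2.$$
Since $F$ is finite, some state in $F$ has $k$-step probabilities not tending to zero. This rules out transience and null recurrence, so the chain is positive recurrent.

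The main obstacle is Step 2: passing from uniformly bounded lengths to tightness on a possibly infinite state space. The key is a $k$-uniform domination of the law of $\psi_k$ by a summable path measure, which the proof of Proposition \ref{prop: E psi-k uniform finite} supplies via Lemma \ref{lem:cone-probs}. If both $V_i$ are finite, Step 2 is trivial because $\D_L$ is finite.
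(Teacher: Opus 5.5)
Your argument is correct, but it follows a different route from the paper's. The paper never confines the chain to a finite set. From Corollary~\ref{cor: cond exp unif finite} it extracts only $\P[\len(\psi_{k})\le M+1\mid(\W_1,\psi_1)=(g,\varphi)]\ge\delta$ for all $k$. It then proves a uniform minorisation: from every state $(\tilde g,\tilde\varphi)$ with $\len(\tilde\varphi)\le M+1$, which is in general an infinite set, the fixed state $(g,[o,g])$ is reached in exactly $M$ steps with probability bounded below independently of the starting state. The route is built by letting the walk step straight outward with alternating letters $g'',g'$, which flushes out the excursions remembered in $\tilde\varphi$. This gives $\liminf_n q^{(n)}\bigl((g,\varphi),(g,\varphi)\bigr)>0$ directly. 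You instead prove tightness of the marginals and invoke the general fact that irreducible transient or null-recurrent chains have vanishing $k$-step probabilities. That avoids constructing explicit transitions, but it needs a pointwise estimate on the law of $\psi_k$, which the paper never uses.

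That estimate is where your write-up is too thin, and it is not contained in the proof of Proposition~\ref{prop: E psi-k uniform finite}. There, the whole time between the first visit to $X_{\e_{k-1}}$ and $\e_{k-1}$ is absorbed into $\sup_x G(x,x|z)$, which controls only lengths, not individual paths. You need a finer decomposition:
\begin{itemize}
\item Fix $X_{\e_{k-1}}=x$ and split the trajectory up to $\e_k$ into three parts: the first passage from $o$ to $x$, the steps inside $C(x)$, and excursions outside $C(x)$ from $x$ back to $x$.
\item Summed over $x\in D_{k-1}$, the first passages have total weight at most $1/\min_i(1-\xi_i)$, by the same trick as in that proposition.
\item By Lemma~\ref{lem:cone-probs}, the inside steps contribute exactly $\P[X_1=v_1,\ldots,X_m=v_m]$, where $\varphi=[o,v_1,\ldots,v_m]$.
\item At each of the at most $\len(\varphi)+1$ visits of $\varphi$ to $o$, any number of outside excursions may be inserted. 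Each excursion has weight at most $\P_x[X_1\notin C(x)]=\alpha_{\delta(x)}$, so each visit contributes a factor at most $1/\min\{\alpha,1-\alpha\}$.
\item There is a final factor $1-\xi_{\delta(g)}$.
\end{itemize}
For $k=1$ the same works with $x=o$ and $C(x)$ replaced by $V^\ast_{\delta(g)}$.

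Together this gives $\P[\psi_k=\varphi]\le c_L\,\P[X_1=v_1,\ldots,X_m=v_m]$ on paths of length at most $L$, with $c_L=\max_{i,j}\frac{1-\xi_i}{1-\xi_j}\bigl(\min\{\alpha,1-\alpha\}\bigr)^{-(L+1)}$. This constant is not uniform in $\varphi$, but that is harmless because you fix $L$ before choosing $F$. You then need $F$ to capture all but $\varepsilon/c_L$ of the mass $\sum_{\len(\varphi)\le L}\P[\text{the walk from $o$ follows }\varphi]\le L+1$, rather than all but $\varepsilon$. With this filled in, Steps~1--3 go through. The conditioning on $(\W_1,\psi_1)$ is not even needed, since tightness of the unconditional marginals already contradicts $\P[(\W_k,\psi_k)=y]\to0$.
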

\begin{proof} 
By Proposition~\ref{prop: irred MC}, we know that $\left((\mathbf{W}_k,\psi_k)\right)_{k\in\N}$ is irreducible. Hence, it is sufficient to show positive recurrence of a single state in the support $\mathcal{D}$. For $(g,\varphi),(\widetilde{g}, \widetilde{\varphi})\in\mathcal{D}$ and $n\in\N$, let us write 
$$
q^{(n)}\bigl((g,\varphi), (\widetilde{g}, \widetilde{\varphi})\bigr):=\P\Bigl[(\W_{n+1},\psi_{n+1})= (\widetilde{g}, \widetilde{\varphi})\,\Big|\, (\W_1,\psi_1)=(g,\varphi)\Bigr] 
$$
for the $n$-step transition probabilities of $\bigl((\W_k,\psi_k)\bigr)_{k\in\N}$.
According to Feller \cite[Theorem p. 389]{feller} it suffices to  show that 
$$
\liminf\limits_{n\to\infty} q^{(n)}\bigl((g,\varphi), (g,\varphi)\bigr) >0
$$
for some $(g,\varphi)\in \mathcal{D}$.
Fix now any $g\in V_1^\times$ such that $p(\id, g)>0$, and  set $\varphi:=[\id, g] $. Note that $(g,\varphi)\in\mathcal{D} $. By Corollary~\ref{cor: cond exp unif finite}, we may choose $M\in 2\mathbb{N}$ such that 
$$ 
\forall k\in\mathbb{N}:\ \E\Bigl[\len(\psi_k) \,\Bigl|\,  (\W_1, \psi_1)=(g,\varphi)\Bigr] \leq M.
$$
This implies that there exists $\delta>0$ such that 
$$
%p_k:=
\mathbb{P}\Bigl[\len(\psi_k) \leq M +1 \,\Bigl|\,  (\W_1, \psi_1)=(g,\varphi)\Bigr] \geq \delta,
$$ for every $k\in\mathbb{N}$.
Indeed, if the above inequality would not hold, 
then for every $m\in\N$ there would exist  $k_m\in\N$ such that 
$$
\mathbb{P}\Bigl[\len(\psi_{k_m}) \leq M +1 \,\Bigl|\,  (\W_1, \psi_1)=(g,\varphi)\Bigr] \leq \frac{1}{m}.
$$
Thus, 
$$
\P\Bigl[\len(\psi_{k_m}) > M +1 \,\Bigl|\,  (\W_1, \psi_1)=(g,\varphi)\Bigr] \geq 1- \frac{1}{m}.
$$
But then
\begin{eqnarray*}
  M  &\geq &  \E \Bigl[\len(\psi_{k_m}) \,\Bigl|\,  (\W_1, \psi_1)=(g,\varphi)\Bigr]
  \\
  &=& \sum_{L>M+1} L \cdot\P \Bigl[\len(\psi_{k_m}) =L  \,\Bigl|\,  (\W_1, \psi_1)=(g,\varphi)\Bigr]
  \\
  &&+
  \sum_{L\leq M+1} L \cdot\P \Bigl[\len(\psi_{k_m}) =L  \,\Bigl|\,  (\W_1, \psi_1)=(g,\varphi)\Bigr]
  \\
  &\geq  & \sum_{L>M+1} L\cdot \P \Bigl[\len(\psi_{k_m}) =L  \,\Bigl|\,  (\W_1, \psi_1)=(g,\varphi)\Bigr]
  \\
   &\geq & (M+1)\cdot \sum_{L>M+1} \P \Bigl[\len(\psi_{k_m}) =L  \,\Bigl|\,  (\W_1, \psi_1)=(g,\varphi)\Bigr]
  \\ 
  &\geq & (M+1) \cdot \P \Bigl[\len(\psi_{k_m}) >M +1 \,\Bigl|\,  (\W_1, \psi_1)=(g,\varphi)\Bigr]
  \\ 
  &\geq & \left(M+1\right) \cdot \left(1-\frac{1}{m} \right),
\end{eqnarray*}
a contradiction for $m$ large enough.
Thus, we have
\begin{eqnarray}
    \delta &\leq &
\mathbb{P}\Bigl[\len(\psi_{k+1}) \leq M +1 \,\Bigl|\,  (\W_1, \psi_1)=(g,\varphi)\Bigr]
\nonumber
\\ 
&=& 
\sum_{\substack{(\widetilde{g}, \widetilde{\varphi})\in\mathcal{D}:
\\ \len(\widetilde{\varphi})\leq M+1}}
q^{(k)}\bigl((g,\varphi), (\widetilde{g}, \widetilde{\varphi})\bigr)
\label{equ:q-sum-bound}
\end{eqnarray} 
for all $k\in\mathbb{N}$.

The next step is to  show that there exists $\epsilon>0$ such that 
\begin{equation}\label{eq: qM bounded from below}
     q^{(M)}\bigl( (\widetilde{g}, \widetilde{\varphi}), (g,\varphi)\bigr) \geq \epsilon
\end{equation}
for all $(\widetilde{g}, \widetilde{\varphi})\in\mathcal{D}$ with $\len(\widetilde{\varphi})\leq M+1$.
Once we have established Inequality (\ref{eq: qM bounded from below}), the proof concludes as follows for $n>M$:
\begin{eqnarray*}
&& q^{(n)}\bigl((g,\varphi), (g,\varphi)\bigr)\\
 &=&
 \sum_{ (\widetilde{g}, \widetilde{\varphi})\in\mathcal{D}}
 q^{(n-M)}\bigl((g,\varphi), (\widetilde{g}, \widetilde{\varphi})\bigr) \cdot
 q^{(M)}\bigl( (\widetilde{g}, \widetilde{\varphi}), (g,\varphi)\bigr)
    \\
    &\geq &
    \sum_{\substack{(\widetilde{g}, \widetilde{\varphi})\in\mathcal{D}:
\\ \len(\widetilde{\varphi})\leq M+1}}
 q^{(n-M)}\bigl((g,\varphi), (\widetilde{g}, \widetilde{\varphi})\bigr) \cdot
 q^{(M)}\bigl( (\widetilde{g}, \widetilde{\varphi}), (g,\varphi)\bigr)
 \\
 &\geq &  \sum_{\substack{(\widetilde{g}, \widetilde{\varphi})\in\mathcal{D}:
\\ \len(\widetilde{\varphi})\leq M+1}}
 q^{(n-M)}\bigl((g,\varphi), (\widetilde{g}, \widetilde{\varphi})\bigr) \cdot
\epsilon%^M
\\ 
&\stackrel{(\ref{equ:q-sum-bound})}{\geq} & \delta \cdot \epsilon%^M
>0,
\end{eqnarray*}
which implies for $n>M$ that  $q_{(g,\varphi), (g,\varphi)}^{(n)}$ is bounded away from zero, yielding positive recurrence of $(g,\varphi)$.
\\
Hence, it is left to verify Inequality~(\ref{eq: qM bounded from below}). Now let $(\widetilde{g}, \widetilde{\varphi})\in\mathcal{D}$ be with $\len(\widetilde{\varphi})\leq M+1 $. W.l.o.g. we assume that $\widetilde{g}\in V_1^{\times} $; the  case $\widetilde{g}\in V_2^{\times} $ follows analogously by  exchanging the roles of $V_1$ and $V_2$. 
We will construct a sequence of states which leads from $(\widetilde{g}, \widetilde{\varphi}) $ to $ (g,\varphi)$ in $M$ steps with probability of at least $\epsilon.$
Fix now some arbitrary $g'\in V_1^{\times}$ and $ g''\in V_2^{\times}$ with $p(\id, g')>0$ and $p(\id, g'')>0$. Set 
$$
\epsilon:=\sqrt[M]{\min\bigl\{p(\id, g')\, , \,  p(\id,g'')\bigr\} }>0.
$$ 
Furthermore, we set $g_0:=\widetilde{g}$ and then alternating $$
g_1:= g'', \ g_2:=g', \ g_3:=g'', \ g_4:=g', \ldots, g_{M-2}:= g', \ g_{M-1}:=g''.%, \textrm{ and } g_M:=g.
$$
Moreover, we define  
$\varphi^{(0)}:= \widetilde{\varphi}$, which is a path from $o$ to $\widetilde{g}$, and inductively $\varphi^{(i)}$ for $i=1,\ldots, M-1$ as follows: 
$$
\varphi^{(i)}:=\begin{cases}
[o,g_i],& \textrm{if $\varphi^{(i-1)}$ does \textit{not} enter $C(g_{i-1})\setminus\{g_{i-1}\}$,}\\
[u_0,\ldots,u_\lambda=o,g_i],& \textrm{if
$g_{i-1}^{-1}\left( \varphi^{(i-1)}\bigl|_{C(g_{i-1})}\right)=[u_0,\ldots,u_\lambda=o]$.}
\end{cases}
$$
Observe that in the second case $\varphi^{(i-1)}\bigl|_{C(g_{i-1})}$ must be a path from $g_{i-1}$ to $g_{i-1}$ within $V^\ast_{\delta(g_i)}$, and therefore we have $u_0=u_\lambda=o$ such that $\varphi^{(i)}$ is indeed a well-defined path from $o$ to $g_i$ within $V_{\delta(g_i)}^\ast$. 
Note also that \mbox{$(g_i,\varphi^{(i)})\in \mathcal{D}$:} if the Markov chain $\bigl((\W_k,\psi_k)\bigr)_{k\in\N}$ is at state $(g_{i-1},\varphi^{(i-1)})$, then the underlying random walk $(X_n)_{n\in\N_0}$ can walk from some $X_{\e_{i-1}}$ ending with $g_{i-1}$ in one single step to $X_{\e_{i}}=X_{\e_{i-1}}g_i$ and stay within $C(X_{\e_{i}})$ afterwards, which leads to $(\W_i,\psi_i)=(g_i,\varphi^{(i)})$. In particular, 
the paths $\varphi^{(i)}$ are getting shorter and shorter as $i$ grows until it finally becomes $[o,g_i]$ for some $i\leq M-1$, and we set 
$$
\varphi^{(M)}:=\varphi=[o,g]\quad \textrm{ and }\quad g_M:=g.
$$
Next, we show that $(g_0,\varphi^{(0)}),\ldots,(g_M,\varphi^{(M)})$ define indeed a sequence of $M$ consecutive states of the Markov chain $\bigl((\W_k,\psi_k)\bigr)_{k\in\N}$ with positive transition probabilities.
 The Markov property together with  Equation~(\ref{eq: cond prob expressed with Xi}) in the proof of Proposition~\ref{prop: irred MC} yields

\begin{eqnarray*}
&& q^{(1)}\bigl((g_{i-1}, \varphi^{(i-1)}) , (g_i, \varphi^{(i)}) \bigr)
\\[1ex]
&=&
\P\Bigl[  (\W_i,\psi_i)=(g_i, \varphi^{(i)}) \,\Bigl|\,  (\W_{i-1},\psi_{i-1})=(g_{i-1}, \varphi^{(i-1)})  \Bigr]
\\
    &=& 
    \frac{1-\xi_{\delta(g_{i})}}{1-\xi_{\delta(g_{i-1})}} \cdot 
    \sum_{\substack{m\in\N,\\ [o,y_1,\ldots,y_m]\in \hat\Pi_m(\varphi^{(i-1)},\varphi^{(i)},g_{i})}} \P\bigl[X_{1}=y_1,\ldots,X_{m}=y_m\bigr]
    \\    
    &\geq &   \frac{1-\xi_{\delta(g_{i})}}{1-\xi_{\delta(g_{i-1})}} \cdot  p(\id, g_i)>0,
\end{eqnarray*} 
for all $i=1,\ldots, M$. Here, $\hat\Pi_m(\varphi^{(i-1)},\varphi^{(i)},g_{i})$ is defined as in the proof of Proposition~\ref{prop: irred MC}.
Thus, using the above inequality, we see that 
\begin{eqnarray*}
&&     q^{(M)}\bigl((\widetilde{g}, \widetilde{\varphi}), (g,\varphi)\bigr) \\
&\geq &
     \prod_{i=1}^M q^{(1)}\bigl((g_{i-1},\varphi^{(i-1)}), (g_i,\varphi^{(i)})\bigr)
     \\ 
     &=&
      \prod_{i=1}^{\frac{M}{2}} q^{(1)}\bigl((g_{2i-2},\varphi^{(2i-2)}), (g_{2i-1},\varphi^{(2i-1)})\bigr)
      \cdot 
      q^{(1)}\bigl((g_{2i-1},\varphi^{(2i-1)}), (g_{2i},\varphi^{(2i)})\bigr)
      \\ 
      &\stackrel{(\ast)}{\geq} &
       \prod_{i=1}^{\frac{M}{2}} \frac{1-\xi_{\delta(g_{2i-1})}}{1-\xi_{\delta(g_{2i-2})}} \cdot  p(\id, g_{2i-1})
       \cdot 
       \frac{1-\xi_{\delta(g_{2i})}}{1-\xi_{\delta(g_{2i-1})}} \cdot  p(\id, g_{2i})
       \\ 
       &= &
        \prod_{i=1}^{\frac{M}{2}}  p(\id, g_{2i-1})
        \cdot  p(\id, g_{2i}) 
        \geq  \epsilon,
\end{eqnarray*}
where we have used that $\delta(g_{2i}) = \delta(g_{2i-2}) $ in Inequality $(\ast)$. This proves Inequality~(\ref{eq: qM bounded from below}) and thus finishes the proof.
\end{proof}

\subsection{Proof of the Local Limit Theorem}

Recall that, by shift-invariance (\ref{equ:shift-invariance-pi_k}) of cone-additive functions, we have for every $k\geq 2$:
$$
f(\pi_k)=f\bigl(X_{\e_{k-1}}^{-1}\pi_k\bigr)
=f\Bigl(\psi_k\bigl|_{\neg C(\mathbf{W}_k)}\Bigr).
$$
The additivity property then yields
\begin{eqnarray}
&& f\bigl([X_0,\ldots,X_n]\bigr)\nonumber\\
&=& f(\pi_1) + \sum_{i=2}^{\mathbf{k}(n)} f(\pi_i)+ f\bigl(\pi^\ast_{n}\bigr)\nonumber\\
&=& f(\pi_1) + \sum_{i=2}^{\mathbf{k}(n)} f\Bigl(\psi_i\bigl|_{\neg C(\mathbf{W}_i)}\Bigr)+ f\bigl(\pi^\ast_{n}\bigr).\label{equ:f-decomposition}
\end{eqnarray}

The next lemma shows that $f(\pi_1)$ does not play a role in the asymptotic behaviour of $\frac1n f\bigl([X_0,\ldots,X_n]\bigr)$:

\begin{lemma}\label{lem: beginning to 0}
$$
\lim_{n\to\infty} \frac{f(\pi_1)}{n}=0 \quad \textrm{almost surely.}
$$    
\end{lemma}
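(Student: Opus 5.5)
The plan is to show that $f(\pi_1)$ is an almost surely finite random variable that does not depend on $n$. Then dividing by $n$ gives the limit $0$ trivially. By definition, $\pi_1=[X_0,\ldots,X_{\e_1}]\bigl|_{\neg C(X_{\e_1})}$ is a single random path determined by the trajectory up to time $\e_1$, and it does not change with $n$. Since $\e_1<\infty$ almost surely (Subsection~\ref{subsec: paths}, using $\Vert X_n\Vert\to\infty$ almost surely), $\pi_1$ is almost surely a finite path in $\Pi$, with $\len(\pi_1)\le \e_1<\infty$.

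Next I use the growth assumption (\ref{cond: rest bounded}) from Theorem~\ref{thm:limit-theorem}, which gives
$$
|f(\pi_1)|\le C\cdot R_f^{\len(\pi_1)}\le C\cdot \max\{1,R_f\}^{\e_1}<\infty \quad\textrm{almost surely}.
$$
Strictly, this bound is not even needed. Because $f$ is real-valued and $\pi_1\in\Pi$ almost surely, $f(\pi_1)$ is already an almost surely finite real number. It follows that $|f(\pi_1)|/n\le C\max\{1,R_f\}^{\e_1}/n\to 0$ on the event $\{\e_1<\infty\}$, which has probability one.

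If the paper prefers a quantitative statement, for instance for later use with $L^1$ bounds, I would add that $\len(\pi_1)\le\len(\psi_1)+(\e_1-T_0)$. Proposition~\ref{prop: E psi-k uniform finite}, or the same generating-function estimate in its proof, then gives $\E[\mathcal{R}^{\e_1}]<\infty$. Since $R_f<\mathcal{R}$, this yields $\E|f(\pi_1)|<\infty$, and almost sure finiteness follows again.

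I see no real obstacle here. The only point that needs care is to note that $\pi_1$ is fixed once the trajectory is fixed and does not depend on $n$. This holds because $\e_1$ is defined through the whole future of the trajectory and not through the time horizon $n$.
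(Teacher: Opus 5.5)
Your argument is correct and is essentially the paper's own proof. Since $\e_1<\infty$ almost surely, $\pi_1$ is almost surely a single finite path that does not depend on $n$, so $f(\pi_1)$ is an almost surely finite real number and $f(\pi_1)/n\to 0$. The growth bound and the $L^1$ remark are unnecessary; the $L^1$ remark also does not follow directly from Proposition~\ref{prop: E psi-k uniform finite}, since that bounds $\len(\psi_k)$ rather than $\e_1$, and its estimate would need adapting to the case $k=1$.
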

\begin{proof}
    By transitivity of the random walk, we have that $\e_1<\infty$ almost surely.
    Since $\pi_1=[X_0,\ldots,X_{\e_1}]\bigl|_{\neg C(\mathbf{W}_1)}$, the value $f(\pi_1)$ is almost surely finite. Letting $n\to\infty$ yields the claim.
\end{proof}

Recall that  $\mathcal{D}$ denotes the support of $(\mathbf{W}_1,\psi_1)$ (and thus, the support of any $(\mathbf{W}_k,\psi_k)$, $k\in\N$). In the following, we introduce  the definition of  expectations w.r.t. some invariant distributions. 
Denote by $\varrho$ the unique invariant probability measure of the $\mathcal{D}$-valued Markov chain $\bigl((\mathbf{W}_k,\psi_k)\bigr)_{k\in\N}$ which exists due to positive recurrence. Let $h:\mathcal{D}\to\mathbb{R}$ be a function. For a $\mathcal{D}$-valued random vector $Z$,  we define the following expectation w.r.t. $\varrho$ as
$$
\mathbb{E}_\varrho\bigl[h(Z)\bigr] := \int h(Z)\,d\varrho= \sum_{(g,\varphi)\in\mathcal{D}} \varrho(g,\varphi)\cdot h(g,\varphi),
$$
provided the expectation exists, that is, if
$$
\mathbb{E}_\varrho\bigl[\bigl|h(Z)\bigr|\bigr] := \sum_{(g,\varphi)\in\mathcal{D}} \varrho(g,\varphi)\cdot \bigl|h(g,\varphi)\bigr|<\infty.
$$
Recall that $\mathcal{R}$ is the number from Proposition \ref{prop: E psi-k uniform finite}. In particular, we write for $R\in(0,\mathcal{R})$
$$
\mathbb{E}_\varrho\bigl[R^{\len(\psi_1)}\bigr] = \int R^{\len(\psi_1)}\,d\varrho= \sum_{(g,\varphi)\in\mathcal{D}} R^{\len(\varphi)}\cdot h(g,\varphi).
$$
Then:
\begin{prop}\label{prop:E-varrho-finite}
For all $R\in (0,\mathcal{R})$,
$$
\mathbb{E}_\varrho\bigl[R^{\mathrm{length}(\psi_1)}\bigr]<\infty.
$$
\end{prop}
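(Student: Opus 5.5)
We want $\sum_{(g,\varphi)\in\mathcal D}\varrho(g,\varphi)R^{\len(\varphi)}<\infty$ for every $R\in(0,\mathcal R)$. The plan is to express $\varrho$ as a limit of the marginal laws of $(\W_k,\psi_k)$ and combine this with the uniform bound of Proposition \ref{prop: E psi-k uniform finite} via Fatou's lemma. For $R\le 1$ the claim is trivial, since $R^{\len(\psi_1)}\le 1$ and $\varrho$ is a probability measure. So we may assume $R\in(1,\mathcal R)$; then $R^{\len(\psi_k)}\le \mathcal R^{\len(\psi_k)}$, and we get
$$
K:=\sup_{k\in\N}\E\bigl[R^{\len(\psi_k)}\bigr]\le \sup_{k\in\N}\E\bigl[\mathcal R^{\len(\psi_k)}\bigr]<\infty .
$$

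First we identify $\varrho$ as a limit of the laws of $(\W_k,\psi_k)$. By Propositions \ref{prop: irred MC} and \ref{prop:positive-recurrence}, the chain $\bigl((\W_k,\psi_k)\bigr)_{k\in\N}$ is irreducible and positive recurrent on the countable set $\D$. Periodicity is a potential obstacle, so we avoid pointwise convergence of $k$-step laws and use Ces\`aro averages instead. For any initial distribution $\mu$ (here $\mu$ is the law of $(\W_1,\psi_1)$) and every $(g,\varphi)\in\D$, the ergodic theorem for positive recurrent chains gives
$$
\frac1N\sum_{k=1}^N \P\bigl[(\W_k,\psi_k)=(g,\varphi)\bigr]\longrightarrow \varrho(g,\varphi)\quad (N\to\infty).
$$
To justify this, write the probability as $\sum_{(g',\varphi')}\mu(g',\varphi')\,\frac1N\sum_{k} q^{(k-1)}\bigl((g',\varphi'),(g,\varphi)\bigr)$. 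Each inner Ces\`aro average tends to $\varrho(g,\varphi)$ and is bounded by $1$, so dominated convergence over $\mu$ yields the limit. Alternatively, if the chain is aperiodic, which the proof of Proposition \ref{prop:positive-recurrence} essentially shows by establishing $\liminf_n q^{(n)}>0$, one gets $\P[(\W_k,\psi_k)=(g,\varphi)]\to\varrho(g,\varphi)$ directly. The Ces\`aro route is the safer one.

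Next we apply Fatou's lemma to the counting measure on $\D$:
$$
\sum_{(g,\varphi)\in\D}\varrho(g,\varphi)R^{\len(\varphi)}
\le \liminf_{N\to\infty}\frac1N\sum_{k=1}^N\sum_{(g,\varphi)\in\D}\P\bigl[(\W_k,\psi_k)=(g,\varphi)\bigr]R^{\len(\varphi)}
= \liminf_{N\to\infty}\frac1N\sum_{k=1}^N\E\bigl[R^{\len(\psi_k)}\bigr]\le K<\infty.
$$
This is the claim. In fact the bound holds even for $R=\mathcal R$, and hence for every $R\in(0,\mathcal R]$.

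The only step needing care is the first one: convergence of the time-averaged marginals to $\varrho$ for a general (non-stationary) initial distribution on a countably infinite state space. This is a standard consequence of positive recurrence, namely $\frac1N\sum_k q^{(k)}(x,y)\to\varrho(y)$ for all $x,y$, combined with dominated convergence over the starting state. Everything else is monotone and nonnegative, so Fatou's lemma applies without any uniform integrability.
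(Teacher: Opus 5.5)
Your proof is correct and follows the same strategy as the paper: you realise $\varrho$ as a limit of the laws of $(\W_k,\psi_k)$ and transfer the uniform bound of Proposition~\ref{prop: E psi-k uniform finite} to $\varrho$ by lower semicontinuity. The paper does this last step by truncating to $R^{\len}\wedge m$, approximating on finite subsets of $\D$ and then applying monotone convergence, while you apply Fatou's lemma directly. Your Ces\`aro averaging is also slightly more careful than the paper: its pointwise convergence $\mu_k(g,\varphi)\to\varrho(g,\varphi)$, which it attributes to the ergodic theorem, tacitly requires aperiodicity, and that has to be extracted from $\liminf_n q^{(n)}\bigl((g,\varphi),(g,\varphi)\bigr)>0$ in the proof of Proposition~\ref{prop:positive-recurrence}.
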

\begin{proof}
Let be $R\in (0,\mathcal{R})$. Due to positive recurrence of $\bigl((\mathbf{W}_k,\psi_k)\bigr)_{k\in\N}$, we have
$$
\lim_{n\to\infty}\frac1n \sum_{j=1}^n R^{\mathrm{length}(\psi_j)} = \mathbb{E}_\varrho\bigl[R^{\mathrm{length}(\psi_1)}\bigr] \in [0,\infty] \quad \textrm{almost surely.}
$$
Define for $k,m\in\N$ and $\pi\in\Pi$
$$
h(\pi):=R^{\textrm{length}(\pi)},\quad
h_m(\pi) := R^{\textrm{length}(\pi)}\land m.
$$
We claim: for all $m\in\N$,
\begin{equation}\label{equ:E-hm-convergence}
\lim_{k\to\infty} \E\bigl[h_m(\psi_k)\bigr] = \sum_{(g,\varphi)\in\mathcal{D}} \varrho(g,\varphi)\cdot h_m(\varphi) =: \mathbb{E}_\varrho\bigl[h_m(\psi_1)\bigr]. 
\end{equation}
For the proof of the claim, we denote by $\mu_k$, $k\in\N$, the distribution of $(\mathbf{W}_k,\psi_k)$. Since $h_m$ is bounded, we have for all $(g,\varphi)\in\mathcal{D}$:
$$
h_m(\varphi)\cdot \mu_k(g,\varphi) \leq m\cdot \mu_k(g,\varphi).
$$
For $\varepsilon>0$, choose a finite subset $A_\varepsilon\subset \mathcal{D}$ such that
$$
\sum_{(g,\varphi) \in A_\varepsilon} \varrho(g,\varphi)\geq 1-\varepsilon.
$$
By the individual ergodic theorem, we have
\begin{equation}\label{equ:mu-k-convergence}
\mu_k(g,\varphi) \xrightarrow{k\to\infty} \varrho(g,\varphi)\quad\quad \textrm{ for every } (g,\varphi)\in\mathcal{D}.
\end{equation}
Moreover,
\begin{eqnarray*}
1&=&\sum_{(g,\varphi)\in A_\varepsilon} \varrho(g,\varphi)
+ \sum_{(g,\varphi)\in \mathcal{D}\setminus A_\varepsilon} \varrho(g,\varphi)\\
&=&
\sum_{(g,\varphi)\in A_\varepsilon} \mu_k(g,\varphi) + \sum_{(g,\varphi)\in \mathcal{D}\setminus A_\varepsilon} \mu_k(g,\varphi).
\end{eqnarray*}
Since $A_\varepsilon$ is finite, we can find, 
for any small $\delta>0$, some $k_\delta\in\N$ such that for all $k\geq k_\delta$ 
$$
\delta >
\biggl| \sum_{(g,\varphi)\in A_\varepsilon} \bigl(\varrho(g,\varphi) - \mu_k(g,\varphi)\bigr)\biggr| =
\biggl| \sum_{(g,\varphi)\in \mathcal{D}\setminus A_\varepsilon} \bigl(\varrho(g,\varphi) -  \mu_k(g,\varphi)\bigr)\biggr|
$$
Hence,
$$
\sum_{(g,\varphi)\in \mathcal{D}\setminus A_\varepsilon} \mu_k(g,\varphi)
\xrightarrow{k\to\infty} 
\sum_{(g,\varphi)\in \mathcal{D}\setminus A_\varepsilon} \varrho(g,\varphi)\leq \varepsilon.
$$
For $k$ sufficiently large, we have then
$$
\sum_{(g,\varphi)\in \mathcal{D}\setminus A_\varepsilon} \mu_k(g,\varphi) <2\varepsilon.
$$
Now:
\begin{eqnarray*}
&& \Bigl| \E\bigl[h_m(\psi_k)] - \E_\varrho\bigl[h_m(\psi_1)\bigr] \Bigr|\\
&=& \biggl| \sum_{(g,\varphi)\in\mathcal{D}} h_m(\varphi) \cdot \bigl( \mu_k(g,\varphi)-\varrho(g,\varphi)\bigr)\biggr| \\
&\leq & \biggl| \sum_{(g,\varphi)\in A_\varepsilon} h_m(\varphi) \cdot \bigl( \mu_k(g,\varphi)-\varrho(g,\varphi)\bigr)\biggr| \\
&&\quad + \biggl| \sum_{(g,\varphi)\in \mathcal{D}\setminus A_\varepsilon} h_m(\varphi) \cdot \bigl( \mu_k(g,\varphi)-\varrho(g,\varphi)\bigr)\biggr|.
\end{eqnarray*}
Since $A_\varepsilon$ is finite, we obtain together with (\ref{equ:mu-k-convergence}) that
$$
\sum_{(g,\varphi)\in A_\varepsilon} h_m(\varphi) \cdot  \mu_k(g,\varphi) \xrightarrow{k\to\infty} 
\sum_{(g,\varphi)\in A_\varepsilon} h_m(\varphi) \cdot \varrho(g,\varphi).
$$
On the other hand, for $k$ large enough,
$$
\sum_{(g,\varphi)\in \mathcal{D}\setminus A_\varepsilon} h_m(\varphi) \cdot  \mu_k(g,\varphi) \leq m\cdot \sum_{(g,\varphi)\in \mathcal{D}\setminus A_\varepsilon} \mu_k(g,\varphi) \leq 2\varepsilon m
$$
and
$$
\sum_{(g,\varphi)\in \mathcal{D}\setminus A_\varepsilon} h_m(\varphi) \cdot  \varrho(g,\varphi) \leq m\cdot \sum_{(g,\varphi)\in \mathcal{D}\setminus A_\varepsilon} \varrho(g,\varphi) \leq \varepsilon m.
$$
Hence, for $k$ large enough, we obtain
\begin{eqnarray*}
&& \Bigl|\E\bigl[h_m(\psi_k)\bigr] -\E_\varrho\bigl[h_m(\psi_1)\bigr] \Bigr|\\
&=&
\biggl|\sum_{(g,\varphi)\in\mathcal{D}} h_m(\varphi)\cdot \mu_k(g,\varphi) - \sum_{(g,\varphi)\in\mathcal{D}} h_m(\varphi)\cdot \varrho(g,\varphi)\biggr| \\
&\leq &\varepsilon + 3\varepsilon m \leq 4\varepsilon m.
\end{eqnarray*}
That is, we have proven the claim (\ref{equ:E-hm-convergence}) that $\E\bigl[h_m(\psi_k)\bigr]\to \E_\varrho\bigl[h_m(\psi_1)\bigr]$ as $k\to\infty$. \checkmark
\par
Recall from Proposition \ref{prop: E psi-k uniform finite} that there exists a non-negative real number $C_0<\infty$ such that $\sup_{k\in\N}\E\bigl[h(\psi_k)\bigr]\leq C_0$.
By definition and positivity of $h_m$, we have then for every $k\in\N$:
$$
\E\bigl[h_m(\psi_k)\bigr] \leq \E\bigl[h(\psi_k)\bigr] \leq C_0 <\infty.
$$
This together with $(\ref{equ:E-hm-convergence})$ implies that
$$
\E_\varrho\bigl[h_m(\psi_1)\bigr]\leq C_0<\infty.
$$
The Monotone Convergence Theorem finally yields
$$
\E_\varrho\bigl[R^{\textrm{length}(\psi_1)}\bigr] =
\E_\varrho\bigl[h(\psi_1)\bigr] =
\lim_{m\to\infty}\E_\varrho\bigl[h_m(\psi_1)\bigr] \leq C_0<\infty.
$$
\end{proof}
From the last proposition, we obtain the following consequence:
\begin{cor}\label{cor:expectation-psi2-restricted-finite}
Let $f:\Pi\to\mathbb{R}$ be a cone-additive function.    
    Suppose that there exist non-negative numbers $R < \mathcal{R}$ and $C<\infty$ with
    $$
    \bigl|f(\pi)\bigr|\leq C\cdot R^{\mathrm{length}(\pi)} \quad \textrm{for all $\pi\in\Pi$}.
    $$
    Then: 
    \begin{eqnarray*}
    \E_\varrho\bigl[\bigl|f(\psi_1|_{\neg C(\mathbf{W}_1)})\bigr|\bigr]&:=&\int|f(\psi_1|_{\neg C(\mathbf{W}_1)})\bigr| \,d\varrho \\ &=&\sum_{(g,\varphi)\in\mathcal{D}} \varrho(g,\varphi)\cdot |f(\varphi|_{\neg C(g})\bigr|
    <\infty.
    \end{eqnarray*}
\end{cor}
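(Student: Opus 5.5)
The plan is to reduce the statement to Proposition \ref{prop:E-varrho-finite} through a pointwise comparison of path lengths. For every $(g,\varphi)\in\mathcal{D}$, $\varphi$ is a finite path from $o$ to $g$. By Lemma \ref{lem:path-restriction-well-defined}, $\varphi|_{\neg C(g)}$ is a well-defined finite path. If all vertices of $\varphi$ lie in $C(g)$ we interpret the restriction as $[g]$. This case only occurs for degenerate $\varphi$, because $\varphi$ starts at $o\notin C(g)$, so the restriction is always defined.

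The first step is to observe that restriction never increases length. By construction, $\varphi|_{\neg C(g)}$ is obtained from $\varphi$ by deleting vertices: the subpaths lying in $C(g)\setminus\{g\}$, together with the duplicate occurrences of $g$. Hence it is a subsequence of $\varphi$, and
$$
\len\bigl(\varphi|_{\neg C(g)}\bigr)\leq \len(\varphi).
$$

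The second step combines this with the growth hypothesis. We may assume $R\geq 1$, since otherwise we replace $R$ by $\max\{R,1\}$. If $\mathcal{R}\le 1$ this would fail, but $\mathcal{R}>1$ by Proposition \ref{prop: E psi-k uniform finite}, so $\max\{R,1\}<\mathcal{R}$ still holds and the bound $|f(\pi)|\le C R^{\len(\pi)}$ remains valid. Monotonicity of $t\mapsto R^t$ then gives
$$
\bigl|f(\varphi|_{\neg C(g)})\bigr|\leq C\cdot R^{\len(\varphi|_{\neg C(g)})}\leq C\cdot R^{\len(\varphi)}.
$$

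Summing against the invariant measure $\varrho$ yields
$$
\sum_{(g,\varphi)\in\mathcal{D}}\varrho(g,\varphi)\,\bigl|f(\varphi|_{\neg C(g)})\bigr|\leq C\cdot \E_\varrho\bigl[R^{\len(\psi_1)}\bigr],
$$
and the right-hand side is finite by Proposition \ref{prop:E-varrho-finite}, since $R\in(0,\mathcal{R})$. The case $R=0$ is trivial, as $f$ then vanishes on all paths of positive length and the bound is immediate.

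The argument is essentially routine. The one point needing care is the length comparison of the first step, in particular checking that the removal of successive occurrences of $g$ only deletes entries and never inserts any. This follows directly from the index-selection definition of $\pi|_{\neg C(v)}$: the indices $k_j$ form a strictly increasing subsequence of $\{0,\ldots,m\}$.
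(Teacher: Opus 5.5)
Your proposal is correct and follows the paper's own argument. The paper bounds $|f|$ by $C\cdot R^{\mathrm{length}}$, notes that the restriction to $\neg C(g)$ is a subsequence of $\varphi$ and hence no longer, and then applies Proposition~\ref{prop:E-varrho-finite}. Your replacement of $R$ by $\max\{R,1\}$ makes explicit a point the paper's chain of inequalities uses silently: the monotonicity of $t\mapsto R^{t}$, which requires $R\geq 1$.
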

\begin{proof}
This follows immediately from Proposition \ref{prop:E-varrho-finite}:
$$
\E_\varrho\Bigl[\bigl|f(\psi_1|_{\neg C(\mathbf{W}_1)})\bigr|\Bigr]
\leq C\cdot \mathbb{E}_\varrho\Bigl[R^{\mathrm{length}(\psi_1|_{\neg C(\mathbf{W}_1)})}\Bigr]\leq C\cdot \mathbb{E}_\varrho\bigl[R^{\mathrm{length}(\psi_1)}\bigr] <\infty.
$$
\end{proof}

\begin{rem}
We note that the condition on the upper bound of $|f(\pi)|$ in Corollary \ref{cor:expectation-psi2-restricted-finite} is satisfied if $f(\pi)$ increases at most polynomially in the length of $\pi$ or, more generally, has sub-exponential growth w.r.t. $\mathrm{length}(\pi)$.
\end{rem}

From \cite{gilch:07} it is known that the process $\bigl((\mathbf{W}_k,\e_k-\e_{k-1})\bigr)_{k\in\N}$ is a homogeneous, irreducible, positive-recurrent Markov chain on some state space $\mathcal{D}_0\subseteq \bigl(V_1^\times\cup V_2^\times) \times \N$ with invariant distribution $\sigma$. Define
$$
\E_\sigma\bigl[\e_2-\e_1\bigr] := \int \e_2-\e_1 \, d\sigma = \sum_{(g,n)\in\mathcal{D}_0} n\cdot \sigma(g,n)<\infty,
$$
which is finite due to  \cite[Proof of Proposition 3.2]{gilch:07}. 
Then:
\begin{lemma}\label{lem: to E[T_2-T_1]}
$$
\lim_{n\to\infty} \frac{n}{\mathbf{k}(n)}=\mathbb{E}_\sigma[\e_2-\e_1] \quad \textrm{almost surely.}
$$
\end{lemma}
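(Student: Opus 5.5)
We write $\e_k=\e_1+\sum_{j=2}^{k}(\e_j-\e_{j-1})$. By \cite{gilch:07}, the increments $(\mathbf{W}_j,\e_j-\e_{j-1})$ form a homogeneous, irreducible, positive-recurrent Markov chain on $\mathcal{D}_0$ with invariant distribution $\sigma$, and $\E_\sigma[\e_2-\e_1]<\infty$. We apply the ergodic theorem for positive-recurrent Markov chains to the integrable function $(g,n)\mapsto n$. This gives
$$
\lim_{k\to\infty}\frac{1}{k}\sum_{j=2}^{k}(\e_j-\e_{j-1})=\E_\sigma[\e_2-\e_1]\quad\textrm{almost surely.}
$$
Since $\e_1<\infty$ almost surely, we also have $\e_1/k\to0$. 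Hence $\e_k/k\to\E_\sigma[\e_2-\e_1]$ almost surely. One point needs care here: the chain is started from the law of $(\mathbf{W}_1,\e_1)$ rather than from $\sigma$. Because the chain is irreducible and positive recurrent, the ergodic theorem holds for every initial distribution, so this causes no problem.

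Next we transfer the statement from $\e_k$ to $\kk(n)$ by sandwiching. By definition of $\kk(n)$ we have
$$
\e_{\kk(n)}\le n<\e_{\kk(n)+1}.
$$
Moreover $\kk(n)\to\infty$ almost surely, because every $\e_k$ is finite almost surely. For $n\ge \e_1$, so that $\kk(n)\ge1$, dividing by $\kk(n)$ gives
$$
\frac{\e_{\kk(n)}}{\kk(n)}\le \frac{n}{\kk(n)}<\frac{\e_{\kk(n)+1}}{\kk(n)+1}\cdot\frac{\kk(n)+1}{\kk(n)}.
$$
As $n\to\infty$, both the left side and the right side converge to $\E_\sigma[\e_2-\e_1]$ almost surely, by the first step applied along the subsequence $\kk(n)\to\infty$. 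This proves the claim.

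The main obstacle is justifying the ergodic theorem in the first step. It requires irreducibility and positive recurrence of $(\mathbf{W}_k,\e_k-\e_{k-1})$ together with integrability of $\e_2-\e_1$ under $\sigma$. All three are quoted from \cite{gilch:07}, with integrability coming from \cite[Proof of Proposition 3.2]{gilch:07}. A further small check: $\e_k$ is strictly increasing, since $\Vert X_{\e_k}\Vert=k$ and the walk changes word length by at most one per step. This guarantees that $\kk(n)$ is well defined and finite.
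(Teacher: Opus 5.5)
Your proof is correct and takes essentially the same route as the paper. You first get $\e_k/k\to\E_\sigma[\e_2-\e_1]$ almost surely; the paper cites \cite[Proposition 3.2]{gilch:07} for this, while you derive it from the ergodic theorem for $(\W_k,\e_k-\e_{k-1})$. You then use the same sandwich $\e_{\kk(n)}\le n<\e_{\kk(n)+1}$ that the paper uses to show $(n-\e_{\kk(n)})/\kk(n)\to 0$.
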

\begin{proof}
Since $\e_0=0$ we have
$$
\frac{e_{\mathbf{k}(n)}}{\mathbf{k}(n)}= \frac1{\mathbf{k}(n)}\sum_{i=0}^{\mathbf{k}(n)-1} (\e_{i+1}-\e_i).
$$
By \cite[Proposition 3.2]{gilch:07}, we have
$$
\lim_{n\to\infty} \frac{e_{\mathbf{k}(n)}}{\mathbf{k}(n)}=\mathbb{E}_\sigma[\e_2-\e_1]\quad \textrm{almost surely.}
$$
This implies
\begin{eqnarray*}
0&\leq & \frac{n-\e_{\mathbf{k}(n)}}{\mathbf{k}(n)}
\leq  \frac{\e_{\mathbf{k}(n)+1}-\e_{\mathbf{k}(n)}}{\mathbf{k}(n)} \\
&=&
\frac{\e_{\mathbf{k}(n)+1}}{\mathbf{k}(n)+1}\underbrace{\frac{\mathbf{k}(n)+1}{\mathbf{k}(n)}}_{\to 1 \textrm{ a.s.}}-\frac{\e_{\mathbf{k}(n)}}{\mathbf{k}(n)}
\xrightarrow{n\to\infty} 0\quad \textrm{almost surely.}
\end{eqnarray*}
Therefore,
$$
 \frac{n}{\mathbf{k}(n)} = \frac{n-\e_{\mathbf{k}(n)}}{\mathbf{k}(n)}+ \frac{\e_{\mathbf{k}(n)}}{\mathbf{k}(n)} \xrightarrow{n\to\infty} \mathbb{E}_\sigma[\e_2-\e_1]\quad \textrm{almost surely.}
$$
\end{proof}
The following lemma will be useful later for bounding  $f(\pi^\ast_{n})$ from above. 
\begin{lemma}\label{lem:R-psi-k-zero}
For $R\in(0,\mathcal{R})$, we have
$$
\lim_{k\to\infty} \frac{R^{\mathrm{length}(\psi_k)}}{k}=0 \quad \textrm{almost surely.}
$$
\end{lemma}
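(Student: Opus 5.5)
We need to show that $R^{\len(\psi_k)}/k\to 0$ almost surely. We will argue in two ways, preferring the ergodic route and using a Borel--Cantelli argument as a backup.

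\emph{Ergodic route.} Set $a_k:=R^{\len(\psi_k)}\ge 0$. By positive recurrence of $\bigl((\mathbf{W}_k,\psi_k)\bigr)_{k\in\N}$ (Proposition~\ref{prop:positive-recurrence}) together with Proposition~\ref{prop:E-varrho-finite}, the function $(g,\varphi)\mapsto R^{\len(\varphi)}$ is $\varrho$-integrable. The ergodic theorem for positive-recurrent Markov chains, which holds from any initial distribution, then yields
$$
\frac1k\sum_{j=1}^k a_j\xrightarrow{k\to\infty}\E_\varrho\bigl[R^{\len(\psi_1)}\bigr]=:L<\infty \quad\textrm{almost surely.}
$$
From this we get
$$
\frac{a_k}{k}=\frac1k\sum_{j=1}^k a_j-\frac{k-1}{k}\cdot\frac1{k-1}\sum_{j=1}^{k-1}a_j\longrightarrow L-L=0,
$$
which is the claim.

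\emph{Borel--Cantelli route.} Fix $R<R'<\mathcal{R}$. By Proposition~\ref{prop: E psi-k uniform finite}, possibly after shrinking $\mathcal{R}$ slightly so that $R'$ is admissible, we have $\sup_k\E\bigl[R'^{\len(\psi_k)}\bigr]=:C_1<\infty$. For $\varepsilon>0$, Markov's inequality gives
$$
\P\bigl[R^{\len(\psi_k)}>\varepsilon k\bigr]\le\P\bigl[\len(\psi_k)>\log(\varepsilon k)/\log R\bigr]\le C_1(\varepsilon k)^{-\log R'/\log R}.
$$
For $R>1$ the exponent $\log R'/\log R$ exceeds $1$, so these probabilities are summable, and Borel--Cantelli gives $\limsup_k a_k/k\le\varepsilon$ almost surely. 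For $R\le 1$ the claim is trivial, since $a_k\le 1$.

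The main obstacle is the justification of the ergodic theorem itself. We must check that the chain is started from the law of $(\mathbf{W}_1,\psi_1)$ rather than from $\varrho$; irreducibility plus positive recurrence makes the almost sure limit independent of the initial distribution. We must also ensure that integrability under $\varrho$ is exactly what is required. Both points follow from the results already established, and the Borel--Cantelli route serves as a self-contained alternative if one wishes to avoid them.
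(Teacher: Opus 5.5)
Your ergodic route is exactly the paper's proof: the paper applies the ergodic theorem for the positive-recurrent chain $\bigl((\W_k,\psi_k)\bigr)_{k\in\N}$, using the $\varrho$-integrability from Proposition~\ref{prop:E-varrho-finite}, and then takes the same telescoping difference of Ces\`aro averages. Your Borel--Cantelli alternative is also correct and more elementary, since it uses only the uniform moment bound of Proposition~\ref{prop: E psi-k uniform finite} and bypasses the Markov chain structure (and no shrinking of $\mathcal{R}$ is needed there, because $(R')^{L}\le\mathcal{R}^{L}$ for $1<R'<\mathcal{R}$).
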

\begin{proof}
Let be $R\in(0,\mathcal{R})$. Then we obtain with the Ergodic Theorem for positive recurrent Markov chains the almost sure convergence
$$
\frac{R^{\mathrm{length}(\psi_k)}}{k} = \underbrace{\frac{1}{k}\sum_{j=1}^k R^{\mathrm{length}(\psi_j)}}_{\to \E_\varrho\bigl[R^{\mathrm{length}(\psi_1)}\bigr]<\infty}- \underbrace{\frac{k-1}{k}}_{\to 1}\underbrace{\frac{1}{k-1}\sum_{j=1}^{k-1} R^{\mathrm{length}(\psi_j)}}_{\to \E_\varrho\bigl[R^{\mathrm{length}(\psi_1)}\bigr]<\infty}
\xrightarrow{k\to\infty} 0.% \quad \textrm{a.s.}.
$$
\end{proof}
With the following  lemma, we want to control the term $f(\pi^\ast_{n})$ as $n\to\infty$.
\begin{lemma}\label{lem: end to 0}
Assume that $f:\Pi\to\mathbb{R}$ is a cone-additive function such that there exist  constants $C\in(0,\infty)$ and $R\in(0,\mathcal{R})$ with 
\begin{equation*} %\label{cond: rest bounded 2}
 \bigl|f(\pi)\bigr| \leq C\cdot R^{\mathrm{length}(\pi)} \quad \textrm{ for all } \pi\in\Pi.
\end{equation*}
Then:
$$
\lim_{n\to\infty} \frac{f\bigl(\pi^\ast_n\bigr)}{n}=0 \quad \textrm{almost surely.}
$$    
\end{lemma}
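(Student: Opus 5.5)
We bound $|f(\pi^\ast_n)|\le C\cdot R^{\len(\pi^\ast_n)}$ and control the length of $\pi^\ast_n$ by the length of the next path $\psi_{\kk(n)+1}$. Then Lemma \ref{lem:R-psi-k-zero} and Lemma \ref{lem: to E[T_2-T_1]} finish the argument.

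\emph{Step 1: comparison of path lengths.} Set $k:=\kk(n)$, so that $\e_k\le n<\e_{k+1}$. By definition, $\pi^\ast_n=[X_0,\ldots,X_n]\bigl|_{C(X_{\e_k})}$. It consists of the pieces of the trajectory up to time $n$ that lie inside $C(X_{\e_k})$, with successive double occurrences of $X_{\e_k}$ removed.

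On the other hand, $\psi_{k+1}=X_{\e_k}^{-1}\bigl([X_0,\ldots,X_{\e_{k+1}}]\bigl|_{C(X_{\e_k})}\bigr)$. Since $n<\e_{k+1}$, the path $[X_0,\ldots,X_n]\bigl|_{C(X_{\e_k})}$ is an initial segment of $[X_0,\ldots,X_{\e_{k+1}}]\bigl|_{C(X_{\e_k})}$. The restriction procedure processes the trajectory sequentially, and truncating the trajectory at time $n$ only truncates the restricted path. The one subtlety is a possible terminal duplicate of $X_{\e_k}$, which only makes $\pi^\ast_n$ shorter. Prefix removal does not change length, so we get
$$
\len(\pi^\ast_n)\le \len(\psi_{k+1}).
$$
This is the step requiring some care with the definition of restriction, but it is elementary.

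\emph{Step 2: the bound.} Assume w.l.o.g. $R\ge 1$; otherwise replace $R$ by $\max\{R,1\}$, which is still smaller than $\mathcal{R}$ since $\mathcal{R}>1$. Then $R^{\len}$ is monotone in the length, and
$$
\frac{|f(\pi^\ast_n)|}{n}\le C\,\frac{R^{\len(\psi_{\kk(n)+1})}}{\kk(n)+1}\cdot\frac{\kk(n)+1}{n}.
$$
Since $\e_k<\infty$ almost surely for every $k$, we have $\kk(n)\to\infty$ almost surely. Lemma \ref{lem:R-psi-k-zero}, applied along the subsequence $k=\kk(n)+1\to\infty$, gives that the first fraction tends to $0$ almost surely. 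By Lemma \ref{lem: to E[T_2-T_1]}, $\kk(n)/n\to 1/\E_\sigma[\e_2-\e_1]$. This limit is finite because $\e_2-\e_1\ge 1$, so $\E_\sigma[\e_2-\e_1]\ge 1$. Hence $(\kk(n)+1)/n$ is almost surely bounded, and the product tends to $0$ almost surely. This proves the claim.

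The main point of care is Step 1, specifically checking that the restriction to $C(X_{\e_k})$ is compatible with truncation in time. An alternative fallback bound is $\len(\pi^\ast_n)\le n-T_k$, where $T_k$ is the first visit to $X_{\e_k}$. In that case one bounds $n-T_k\le(\e_{k+1}-\e_k)+(\e_k-T_k)$, whose exponential moments are controlled exactly as in the proof of Proposition \ref{prop: E psi-k uniform finite}. One then applies a Borel--Cantelli-type argument: summing $\P[R^{L_k}>\varepsilon k]\le \sup_k\E[R'^{L_k}]\,(\varepsilon k)^{-\log R'/\log R}$ for some $R<R'<\mathcal{R}$, which is summable once $R$ is close enough to $R'$ that the exponent exceeds $1$. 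The direct route via $\psi_{k+1}$ avoids this.
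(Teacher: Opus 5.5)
Your proposal is correct and follows the paper's own proof essentially step for step. You bound $|f(\pi^\ast_n)|\le C R^{\len(\pi^\ast_n)}$, then use $\len(\pi^\ast_n)\le\len(\psi_{\kk(n)+1})$, which holds because the restriction to $C(X_{\e_{\kk(n)}})$ of the trajectory up to time $n$ is an initial segment of the restriction up to time $\e_{\kk(n)+1}$, and you conclude with Lemmas \ref{lem:R-psi-k-zero} and \ref{lem: to E[T_2-T_1]}. Your extra care only tightens the same argument: you reduce to $R\ge 1$ so that $R^{\len}$ is monotone, and you note that $\kk(n)/n$ converges to $1/\E_\sigma[\e_2-\e_1]$, not to $\E_\sigma[\e_2-\e_1]$ as the paper's display states.
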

\begin{proof} 
First, observe that $\pi^\ast_n$ consists of the part of $[X_0,\ldots,X_n]$ which lies inside $C(X_{\e_{\mathbf{k}(n)}})$; this subpath will be inherited by $\psi_{\mathbf{e}_{\mathbf{k}(n)+1}}$ plus maybe some part which may be visited between times $n$ and $\e_{\mathbf{k}(n)+1}$, which becomes also part of $\psi_{\mathbf{e}_{\mathbf{k}(n)+1}}$. Therefore, 
we have for all $n\in\N$ that
$$
\mathrm{length}(\pi^\ast_n)\leq \mathrm{length}\bigl(\psi_{\kk(n)+1}\bigr),
$$
which in turn yields together with Lemmas 
\ref{lem: to E[T_2-T_1]} and \ref{lem:R-psi-k-zero} almost surely:
$$
\frac{\bigl|f\bigl(\pi^\ast_n\bigr)\bigr|}{n}
\leq C\cdot \frac{R^{\mathrm{length}(\pi_n^\ast)}}{n} 
\leq
C\cdot \underbrace{\frac{R^{\mathrm{length}(\psi_{\mathbf{k}(n)+1})}}{\mathbf{k}(n)+1}}_{\to 0}
\underbrace{\frac{\mathbf{k}(n)+1}{n}}_{\to \E_\sigma[\e_2-\e_1]<\infty}
\xrightarrow{n\to\infty}0.% \quad \textrm{a.s.}.
$$
\end{proof}

Once again, we recall that, by shift invariance of cone-additive functions, we have $f\bigl(\psi_2\bigl|_{\neg C(\mathbf{W}_2)}\bigr)=f(\pi_2)$.
Moreover, \cite[Theorem 3.3]{gilch:07} shows that the rate of escape w.r.t. the word length exists and is given by
$$
\ell=\lim_{n\to\infty}\frac{\Vert X_n\Vert}{n}=\frac1{\E_\sigma[\e_2-\e_1]}\in (0,\infty) \quad \textrm{ almost surely.}
$$ 
We now obtain:
\begin{lemma}\label{lem: to E[f(pi)]}
Assume that $\mathbb{E}_\varrho\Bigl[\bigl|f\bigl(\psi_1\bigl|_{\neg C(\mathbf{W}_1)}\bigr)\bigr|\Bigr]<\infty$. 
Then almost surely
$$
\lim_{n\to\infty}\frac{f\bigl([X_0,\ldots,X_n]\bigr)}{n}=
\frac{\mathbb{E}_\varrho\bigl[f\bigl(\psi_1\bigl|_{\neg C(\mathbf{W}_1)}\bigr)\bigr]}{\E_\sigma[\e_2-\e_1]}=\ell\cdot \mathbb{E}_\varrho\Bigl[f\bigl(\psi_1\bigl|_{\neg C(\mathbf{W}_1)}\bigr)\Bigr].% \quad \textrm{a.s..}
$$
\end{lemma}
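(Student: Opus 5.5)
We start from the decomposition (\ref{equ:f-decomposition}),
$$
\frac{f\bigl([X_0,\ldots,X_n]\bigr)}{n}=\frac{f(\pi_1)}{n}+\frac{\mathbf{k}(n)}{n}\cdot\frac{1}{\mathbf{k}(n)}\sum_{i=2}^{\mathbf{k}(n)} f\Bigl(\psi_i\bigl|_{\neg C(\mathbf{W}_i)}\Bigr)+\frac{f(\pi^\ast_n)}{n},
$$
which holds almost surely by cone-additivity, and we treat the three terms separately. The first term tends to $0$ almost surely by Lemma \ref{lem: beginning to 0}. The last term tends to $0$ almost surely by Lemma \ref{lem: end to 0}. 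The growth hypothesis needed there is the one under which the present lemma will be applied; it also gives the integrability assumption via Corollary \ref{cor:expectation-psi2-restricted-finite}. If one wants to use only the stated integrability assumption, the same ergodic-average trick as in Lemma \ref{lem:R-psi-k-zero} handles the boundary term of the sum, but the $\pi^\ast_n$-term genuinely needs the growth bound. I would therefore note that in the intended application (Theorem \ref{thm:limit-theorem}) both hypotheses hold.

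The main step is the middle term. Define $h:\mathcal{D}\to\mathbb{R}$ by $h(g,\varphi):=f\bigl(\varphi|_{\neg C(g)}\bigr)$. By assumption, $h\in L^1(\varrho)$. By Propositions \ref{prop: irred MC} and \ref{prop:positive-recurrence}, $\bigl((\mathbf{W}_k,\psi_k)\bigr)_{k\in\N}$ is an irreducible, positive-recurrent Markov chain on $\mathcal{D}$ with invariant probability measure $\varrho$. The ergodic theorem for such chains, valid for every initial distribution and for $\varrho$-integrable $h$, then yields
$$
\frac1k\sum_{i=2}^{k} h(\mathbf{W}_i,\psi_i)\xrightarrow{k\to\infty}\mathbb{E}_\varrho\Bigl[f\bigl(\psi_1\bigl|_{\neg C(\mathbf{W}_1)}\bigr)\Bigr]\quad\textrm{almost surely}.
$$
The missing $i=1$ term is finite, so dropping it does not affect the limit. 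Since $\mathbf{k}(n)\to\infty$ almost surely (because $\e_k<\infty$ for every $k$), we may substitute $k=\mathbf{k}(n)$ along this random subsequence.

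Finally, Lemma \ref{lem: to E[T_2-T_1]} gives $\mathbf{k}(n)/n\to 1/\E_\sigma[\e_2-\e_1]=\ell$ almost surely. Multiplying the two almost sure limits and adding the vanishing boundary terms yields the claimed limit and both expressions for it.

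The only delicate point is to justify the ergodic theorem for a chain not started in stationarity and with a possibly unbounded $h$. I would apply it separately to the positive and negative parts of $h$, which are both $\varrho$-integrable. The standard regeneration argument, with i.i.d. excursions between successive visits to a fixed state, shows that the statement holds for any initial law, since the chain hits that state almost surely by irreducibility and recurrence.
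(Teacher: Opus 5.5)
Your proposal is correct and follows the paper's proof essentially step by step: the decomposition (\ref{equ:f-decomposition}), Lemmas \ref{lem: beginning to 0} and \ref{lem: end to 0} for the two boundary terms, the ergodic theorem for $\bigl((\mathbf{W}_k,\psi_k)\bigr)_{k\in\N}$ on the middle sum, and Lemma \ref{lem: to E[T_2-T_1]} for $\mathbf{k}(n)/n$. You are also right that the $\pi^\ast_n$-term needs the growth bound (\ref{cond: rest bounded}) and not just $\varrho$-integrability: the paper's proof relies on Lemma \ref{lem: end to 0} tacitly in the same way, and the bound cannot simply be dropped (for instance, set $f([u_0,\ldots,u_m]):=e^{\Vert u_m\Vert}$ if some $u_i\in C(u_m)\setminus\{u_m\}$, and $0$ otherwise; this $f$ is cone-additive and vanishes on every $\pi_j$ and on every $\varphi|_{\neg C(g)}$ with $(g,\varphi)\in\mathcal{D}$, yet $f([X_0,\ldots,X_n])/n$ is unbounded whenever the walk steps back into a vertex from inside its cone infinitely often, e.g.\ for irreducible $P_1,P_2$).
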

\begin{proof}
For all $n\in\N$, according to (\ref{equ:f-decomposition}) we have
\begin{eqnarray*}
&& \frac1{n} f\bigl([X_0,\ldots,X_{n}]\bigr)\\
&=& \frac1{n}\cdot \biggl( 
f(\pi_1) + \sum_{i=2}^{\kk(n)} f(\pi_i) + f\bigl(\pi^\ast_{n}\bigr)\biggr) \\
&=& \frac1{n} f\bigl(\pi_1\bigr)+
\frac{\kk(n)}{n}\frac1{\kk(n)}
\sum_{i=2}^{\mathbf{k(n)}}f\bigl(\psi_i\bigl|_{\neg C(\mathbf{W}_i)}\bigr)+
\frac1{n}f\bigl(\pi^\ast_{n}\bigr).
\end{eqnarray*}
By Lemma \ref{lem: beginning to 0}, we have that $\frac1n f(\pi_1)\to 0$ almost surely as $n\to\infty$.
On the other hand side, by Lemma \ref{lem: end to 0}, we have
$\frac1n f(\pi^\ast_n)\to 0$ almost surely as $n\to\infty$.
The Ergodic Theorem for positive-recurrent Markov chains yields
$$
\lim_{n\to\infty} \frac1{\mathbf{k}(n)}
\sum_{i=2}^{\mathbf{k}(n)}f\Bigl(\psi_i\bigl|_{\neg C(\mathbf{W}_i)}\Bigr) = \mathbb{E}_\varrho\Bigl[f\bigl(\psi_1\bigl|_{\neg C(\mathbf{W}_1)}\bigr)\Bigr] \quad \textrm{almost surely.}
$$
Together with Lemma \ref{lem: to E[T_2-T_1]} we obtain
\begin{eqnarray}
\lim_{n\to\infty}\frac1n f\bigl([X_0,\ldots,X_n]\bigr) 
&=& \lim_{n\to\infty}\frac{\mathbf{k}(n)}{n}\frac1{\mathbf{k}(n)} \sum_{i=2}^{\kk(n)}f\Bigl(\psi_i\bigl|_{C(\mathbf{W}_i)}\Bigr)\nonumber
\\
&=& \frac{\mathbb{E}_\varrho\Bigl[f\Bigl(\psi_1\bigl|_{\neg C(\mathbf{W}_1)}\Bigr)\Bigr]}{\E_\sigma[\e_2-\e_1]} \label{equ:c-formula}\\
&=& \ell\cdot \mathbb{E}_\varrho\Bigl[f\Bigl(\psi_1\bigl|_{\neg C(\mathbf{W}_1)}\Bigr)\Bigr] \quad \textrm{almost surely.}\nonumber
\end{eqnarray}
\end{proof}

Finally, we can give the proof of our main result:
\begin{proof}[Proof of Theorem \ref{thm:limit-theorem}]
By assumption there exist constants $C\in(0,\infty)$ and $R\in(0,\mathcal{R})$ such that $\bigl|f(\pi)\bigr|\leq R^{\mathrm{length}(\pi)}$  for all $\pi\in\Pi$. Corollary \ref{cor:expectation-psi2-restricted-finite} provides that
$$
\mathbb{E}_\varrho\Bigl[\bigl|f\bigl(\psi_1\bigl|_{\neg C(\mathbf{W}_1)}\bigr)\bigr|\Bigr]<\infty.
$$
Lemma \ref{lem: to E[f(pi)]} yields now existence of some $\mathfrak{c}\in\mathbb{R}$ such that
$$
\lim_{n\to\infty} \frac{f\bigl([X_0,\ldots,X_n]\bigr)}{n}= \mathfrak{c}:=\frac{\mathbb{E}_\varrho\Bigl[f\Bigl(\psi_1\bigl|_{\neg C(\mathbf{W}_1)}\Bigr)\Bigr]}{\E_\sigma[\e_2-\e_1]}\quad \textrm{ almost surely.}
$$
If $f$ is strictly positive, then it follows immediately from the formula for the limit $\mathfrak{c}$ that $\mathfrak{c}>0$, since $\e_2-\e_1>0$ almost surely and $\ell>0$ according to \cite[Theorem 3.3]{gilch:07}.
\end{proof}
In particular, we have shown:
\begin{cor}\label{cor:c-formula}
$$
\mathfrak{c}= \frac{\mathbb{E}_\varrho\Bigl[f\Bigl(\psi_1\bigl|_{\neg C(\mathbf{W}_1)}\Bigr)\Bigr]}{\E_\sigma[\e_2-\e_1]} \\
= \ell\cdot \mathbb{E}_\varrho\Bigl[f\Bigl(\psi_1\bigl|_{\neg C(\mathbf{W}_1)}\Bigr)\Bigr].
$$
\end{cor}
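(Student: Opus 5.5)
The corollary just records the explicit value of the constant $\mathfrak{c}$ produced in the proof of Theorem~\ref{thm:limit-theorem}, so the plan is to read it off from Lemma~\ref{lem: to E[f(pi)]}. Under the growth hypothesis (\ref{cond: rest bounded}), Corollary~\ref{cor:expectation-psi2-restricted-finite} gives
\[
\mathbb{E}_\varrho\Bigl[\bigl|f\bigl(\psi_1\bigl|_{\neg C(\mathbf{W}_1)}\bigr)\bigr|\Bigr]<\infty,
\]
so the hypothesis of Lemma~\ref{lem: to E[f(pi)]} holds. That lemma then identifies the almost sure limit of $\frac1n f([X_0,\ldots,X_n])$ as the ratio $\mathbb{E}_\varrho[f(\psi_1|_{\neg C(\mathbf{W}_1)})]/\E_\sigma[\e_2-\e_1]$. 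Almost sure limits are unique, so this ratio is exactly the constant $\mathfrak{c}$ of Theorem~\ref{thm:limit-theorem}, which gives the first equality.

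For the second equality I invoke \cite[Theorem 3.3]{gilch:07}. It states that $\ell=\lim_n\Vert X_n\Vert/n=1/\E_\sigma[\e_2-\e_1]$, and that this quantity lies in $(0,\infty)$. In particular $\E_\sigma[\e_2-\e_1]$ is finite and nonzero, so dividing by it is legitimate, and the quotient equals $\ell$ times the numerator.

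The only point needing care is the link between the two ergodic limits in the proof of Lemma~\ref{lem: to E[f(pi)]}. The average $\frac1{\mathbf{k}(n)}\sum_{i\le \mathbf{k}(n)} f(\pi_i)$ is an ergodic average along the chain $((\mathbf{W}_k,\psi_k))_k$ with stationary law $\varrho$. By contrast, $n/\mathbf{k}(n)$ converges through the different chain $((\mathbf{W}_k,\e_k-\e_{k-1}))_k$ with stationary law $\sigma$, by Lemma~\ref{lem: to E[T_2-T_1]}. Both limits hold almost surely on the same probability space, so their product converges almost surely with no compatibility condition between $\varrho$ and $\sigma$. Beyond this, nothing new is needed: the corollary is a restatement of formula (\ref{equ:c-formula}).
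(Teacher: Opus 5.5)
Your proposal is correct and matches the paper's argument: the paper also reads the corollary off from formula (\ref{equ:c-formula}) in the proof of Lemma~\ref{lem: to E[f(pi)]}, whose hypothesis is supplied by Corollary~\ref{cor:expectation-psi2-restricted-finite}. The second equality comes, as in your write-up, from $\ell=1/\E_\sigma[\e_2-\e_1]\in(0,\infty)$ by \cite[Theorem 3.3]{gilch:07}.
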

\begin{proof}
This follows immediately from (\ref{equ:c-formula}) in the proof of Lemma \ref{lem: to E[f(pi)]}.
\end{proof}

\section{Applications}
\label{sec:applications}

In this section, we present applications of Theorem \ref{thm:limit-theorem}.
Important asymptotic random walk quantities such as entropy, drift, range, and certain travelling salesman problems on free products of graphs share a common property: they can be viewed as trajectory-wise limits of the function values of cone-additive functions. 
In this section, we will see old and completely new results, all derived from the viewpoint of cone-additive functions.
\par
In all of the below, we assume that the random walk $(X_n)_{n\in\mathbb{N}_0}$ on $V$ is such that the convergence radius $\mathscr{R} $ of the Green function $ G(\id, \id\vert z) $ is strictly bigger than $1$. Recall also once again the definitions of $\pi_k$, $k\in\N$, and $\pi_n^\ast$, $n\in\N$, from Subsection \ref{subsec: paths}

\subsection{Entropy} 
%Green metric.  Benjamini-Peres

Denote by $Q_n$ the distribution of $X_n$. The \textit{asymptotic entropy} of $(X_n)_{n\in\N_0}$ is given by 
$$
h=\lim_{n\to\infty}-\frac1n \log Q_n(X_n),
$$
which exists due to \cite{gilch:11}. In the following, we will show with the help of a suitable cone-additive function that this limit exists. 
\par
For $v,w\in V$ and $z\in\mathbb{C}$, the \textit{last visit generating function} is defined as 
$$
L(v,w|z):=\sum_{n\geq 0} \P_v\bigl[X_n=w,\forall m\in\{1,\ldots,n\}: X_m\neq v \bigr]\cdot z^n.
$$ 
For any path $\pi=[u_0,\ldots,u_n]\in\Pi$, we define 
$$
f:\Pi\to\mathbb{R}, \pi\mapsto
f(\pi):=-\log\bigl( L(u_0,u_n|1)\bigr).
$$ 

\begin{lemma} The function $f$ is cone-additive. % (Definition~\ref{def: cone add}).
\end{lemma}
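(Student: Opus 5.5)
We have to verify the two properties of Definition~\ref{def: cone add} for $f(\pi)=-\log L(u_0,u_n|1)$, which depends only on the endpoints of $\pi$. The key tool is the multiplicativity of the last visit generating function along cones:
$$
L(o,x_1\cdots x_k|1)=\prod_{j=1}^{k} L\bigl(x_1\cdots x_{j-1},x_1\cdots x_j\,\big|\,1\bigr),
$$
together with the shift identity $L(v,vw|1)=L(o,w|1)$ for $w$ with $\delta(w)\neq\delta(v)$.

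\textbf{Step 1: the factorisation.} We decompose a path from $v=x_1\cdots x_{j-1}$ to $x_1\cdots x_{k}$ that avoids returning to $v$ according to its last visit to $vx_j$. Every path from $v$ to a vertex in $C(vx_j)$ must pass through $vx_j$. After the last visit to $vx_j$ the path stays in $C(vx_j)\setminus\{vx_j\}$, hence never returns to $v$. This gives
$$
L(v,vx_j\cdots x_k|z)=L(v,vx_j|z)\,L(vx_j,vx_j\cdots x_k|z),
$$
and induction yields the product formula. Lemma~\ref{lem:cone-probs} gives $L(v,vw|z)=L(o,w|z)$, because every path that starts at $v$ and has no return to $v$ stays inside $C(v)$. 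Here "no return" means in particular that the path cannot exit the cone, since exiting requires passing through $v$. All values are finite and positive at $z=1$ by transience, so the logarithms are well defined.

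\textbf{Step 2: additivity.} By Lemma~\ref{lem:path-restriction-well-defined} and the definitions, $\pi_j$ goes from $X_{\e_{j-1}}$ to $X_{\e_j}$, and $\pi_n^\ast$ goes from $X_{\e_{\kk(n)}}$ to $X_n$. Setting $x=X_{\e_{\kk(n)}}$, the same last-visit decomposition gives
$$
L(o,X_n|1)=L(o,x|1)\,L(x,X_n|1),
$$
since $X_n\in C(x)$. Iterating over the factors $L(X_{\e_{j-1}},X_{\e_j}|1)$ and taking $-\log$ gives
$$
f([X_0,\ldots,X_n])=\sum_{j=1}^{\kk(n)} f(\pi_j)+f(\pi_n^\ast).
$$
For $j=1$ we use $X_{\e_0}=o$.

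\textbf{Step 3: shift invariance.} The path $X_{\e_{j-1}}^{-1}\pi_j$ runs from $o$ to $\mathbf{W}_j$, so the shift identity from Step~1 gives
$$
f(\pi_j)=-\log L(X_{\e_{j-1}},X_{\e_{j-1}}\mathbf{W}_j|1)=-\log L(o,\mathbf{W}_j|1)=f\bigl(X_{\e_{j-1}}^{-1}\pi_j\bigr).
$$

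The main obstacle is the factorisation in Step~1. The point to check carefully is the bijection between paths counted by $L(v,\cdot)$ and the pairs of path pieces split at the last visit to $vx_j$: no piece may revisit $v$, and the cone structure (cut vertex $vx_j$) ensures that the second piece stays inside $C(vx_j)$. This is standard generating-function bookkeeping, as in \cite{gilch:11}.
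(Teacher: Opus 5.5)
Your proof is correct and is essentially the paper's argument: you factor $L(\cdot,\cdot|1)$ at the last visit to the cut vertices $X_{\e_1},\ldots,X_{\e_{\kk(n)}}$ (letter by letter, where the paper goes directly through the $X_{\e_i}$), and then use Lemma~\ref{lem:cone-probs} for shift invariance. One justification in Step~1 needs fixing: a path from $v$ with no return to $v$ can leave $C(v)$ at its very first step, so the right reason the paths counted by $L(v,vw|z)$ stay in $C(v)$ is that such a path \emph{ends} in $C(v)$ and could only re-enter the cone through $v$, which is excluded.
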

\begin{proof}
First, we make the following crucial observation:
if every path from $u\in V$ to $w\in V$ has to pass through $v\in V$, then 
\begin{equation}\label{equ:L-decomposition}
L(u,w|1)=L(u,v|1)\cdot L(v,w|1);
\end{equation}
this can be easily checked by conditioning on the last visit of $v$ when walking from $x$ to $w$. In our setting every path
 from $X_0$ to $X_n$ has to pass through $X_{\e_1}, X_{\e_2},\ldots, X_{\e_{\mathbf{k}(n)}}$. Furthermore, observe that $\pi_i$, $i\in\N$, is a path from $X_{\e_{i-1}}$ to $X_{\e_i}$, where $\e_0=0$, and that $\pi^\ast_n$ is a path from $X_{\e_{\kk(n)}}$ to $X_n$.
Therefore, an iterated application of (\ref{equ:L-decomposition}) gives
 \begin{eqnarray*}
   &&  f\bigl([X_0, \ldots,X_n]\bigr)=  -\log L(X_0,X_n|1) \\
&=& -\log\left ( \prod_{i=1}^{\mathbf{k}(n)} L(X_{\e_{i-1}},X_{\e_{i}}|1)\cdot L(X_{\e_{\mathbf{k}(n)}},X_n|1)\right)
 \\ 
 &= & -\sum_{i=1}^{\mathbf{k}(n)} \log L(X_{\e_{i-1}},X_{\e_{i}}|1) - \log L(X_{\e_{\mathbf{k}(n)}},X_n|1) \\
 &=&
 \sum_{i=1}^{\mathbf{k}(n)}f(\pi_i) + f(\pi_n^*).
\end{eqnarray*}
 This proves the additivity property in the definition of  cone-additive functions.
 \par
For the proof of shift invariance of $f$, we remark that, for $u,v\in V$ with $v\in C(u)$, we obtain with Lemma \ref{lem:cone-probs} that
\begin{eqnarray*}
L(u,v|1) &=&\sum_{n\geq 0} \P_{u}\bigl[X_n=v,\forall m\in\{1,\ldots,n\}: X_m\neq u \bigr] \\
&=& \sum_{n\geq 0} \P_{o}\bigl[X_n=u^{-1}v,\forall m\in\{1,\ldots,n\}: X_m\neq o \bigr] \\
&=& L(o,u^{-1}v|1),
\end{eqnarray*}
since the probabilities $\P_{u}\bigl[X_n=v,\forall m\in\{1,\ldots,n\}: X_m\neq u \bigr]$ take into account only paths within $C(u)$. Since $\pi_i$, $i\in\N$, is a path from $X_{\e_{j-1}}$ to $X_{\e_j}\in C(X_{\e_{j-1}})$, we get
\begin{eqnarray*}
f(\pi_j) &=& -\log L(X_{\e_{j-1}},X_{\e_j}|1) \\
&=& -\log L(o,X_{\e_{j-1}}^{-1}X_{\e_j}|1) = f\bigl(X_{\e_{j-1}}^{-1}\pi_j \bigr),
\end{eqnarray*}
that is, we have proven shift invariance. 
\end{proof}
 
In the following, we assume now that  there exists some $\varepsilon_0>0$ such that $p(v,w)>\varepsilon_0$ for all $v,w\in V$ with $p(v,w)\neq 0$. This yields that, for any $\pi=[u_0,\ldots,u_n]\in\Pi$,
$$
\bigl|f(\pi)\big| = \bigl| \log L(u_0,u_n)\bigr| \leq \bigl| \log \varepsilon_0^n\bigr| \leq n\cdot \bigl|\log\varepsilon_0\bigr|,
$$
that is, Condition (\ref{cond: rest bounded}) is satisfied and Theorem \ref{thm:limit-theorem} can be applied which ensures existence of a constant $\mathfrak{c}_{\mathrm{entropy}}>0$ such that
$$
\mathfrak{c}_{\mathrm{entropy}}=\lim_{n\to\infty}\frac1n f\bigl([X_0,\ldots,X_n]\bigr)= \lim_{n\to\infty} -\frac1n \log L(o,X_n) \quad \textrm{ almost surely.} 
$$
In \cite{gilch:11} it is shown that the above limit coincides with the asymptotic entropy, that is, 
$$
h=\lim_{n\to\infty}-\frac1n \log Q_n(X_n)=\lim_{n\to\infty}-\frac1n \log L(o,X_n).
$$

\subsection{Range}

\subsubsection{The Number of Visited Vertices}

The \textit{range} of the random walk $(X_n)_{n\in\N_0}$ on $V$ until time \mbox{$n\in\N_0$} is given by
$$
\mathbf{R}_n:= \bigl|\{X_{0},\ldots, X_n\}\bigr|,
$$
the number of distinct vertices which are visited up to time $n$. The \textit{asymptotic range} is given by the almost sure constant limit
$$
\mathfrak{r}=\lim_{n\to\infty} \frac{\mathbf{R}_n}{n},
$$
whose existence was shown in \cite{gilch:22}. We now demonstrate the existence of this limit with the help of the following function
$$
R: \Pi\to\mathbb{R},  [u_0,\ldots,u_n] \mapsto \bigl| \{u_0,\ldots, u_n\}\setminus\{u_0\}\bigr|=\bigl| \{u_0,\ldots, u_n\}\bigr|-1.
$$
\begin{lemma}
  The function  $R$ is cone-additive.
\end{lemma}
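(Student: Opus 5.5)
We have to check the two defining properties of cone-additivity for $R$, closely following the range example given right after Definition~\ref{def: cone add}.

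\emph{Additivity.} Fix $n\in\N$ and write $k=\kk(n)$. I first decompose the vertex set $\{X_0,\ldots,X_n\}\setminus\{X_0\}$ along the nested final cones. Put $\mathcal{S}_j':=\mathcal{S}_j\setminus\{X_{\e_{j-1}}\}$ for $j\in\{1,\ldots,k\}$ and $\mathcal{S}^\ast:=C(X_{\e_k})\setminus\{X_{\e_k}\}$. Since $C(X_{\e_1})\supset C(X_{\e_2})\supset\ldots\supset C(X_{\e_k})$, these sets are pairwise disjoint. Their union is $V\setminus\{o\}$: indeed $\mathcal{S}_1'\cup\ldots\cup\mathcal{S}_k'=V\setminus\bigl(\{o\}\cup (C(X_{\e_k})\setminus\{X_{\e_k}\})\bigr)$.

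Next I identify each piece of the range with the corresponding path. By construction of the restriction operations, the vertex set of $\pi_j$ is exactly $\{X_0,\ldots,X_{\e_j}\}\cap\mathcal{S}_j$, since restriction only deletes vertices outside the target set and repeated copies of the boundary vertex. For $i\le n$ with $X_i\in\mathcal{S}_j$, the time $i$ satisfies $i\le\e_j$: after time $\e_j$ the walk stays in $C(X_{\e_j})$, which meets $\mathcal{S}_j$ only in $X_{\e_j}$, and $X_{\e_j}$ is itself visited at time $\e_j\le n$. Hence
$$
\{X_0,\ldots,X_n\}\cap\mathcal{S}_j=\{X_0,\ldots,X_{\e_j}\}\cap\mathcal{S}_j .
$$
Likewise, the vertex set of $\pi^\ast_n$ is $\{X_0,\ldots,X_n\}\cap C(X_{\e_k})$, because $X_{\e_k}$ is visited. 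The first vertex of $\pi_j$ is $X_{\e_{j-1}}$ and the first vertex of $\pi_n^\ast$ is $X_{\e_k}$. Therefore
$$
R(\pi_j)=\bigl|\{X_0,\ldots,X_n\}\cap\mathcal{S}_j'\bigr| \quad\text{and}\quad R(\pi^\ast_n)=\bigl|\{X_0,\ldots,X_n\}\cap\mathcal{S}^\ast\bigr| .
$$
Summing over the disjoint pieces gives $|\{X_0,\ldots,X_n\}|-1=R([X_0,\ldots,X_n])$, which is the additivity property.

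\emph{Shift invariance.} The map $u\mapsto X_{\e_{j-1}}^{-1}u$ is injective on $C(X_{\e_{j-1}})$, since it just cancels a common prefix. It sends the initial vertex $X_{\e_{j-1}}$ of $\pi_j$ to the initial vertex $o$ of $X_{\e_{j-1}}^{-1}\pi_j$, and $X_{\e_{j-1}}^{-1}\pi_j$ is a path by Corollary~\ref{cor:shifted-paths}. So the number of distinct vertices different from the starting vertex is preserved, and $R(\pi_j)=R(X_{\e_{j-1}}^{-1}\pi_j)$.

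The only delicate point is the time-cutoff identity above, namely that no vertex of $\mathcal{S}_j'$ is first visited after $\e_j$. It reduces to the observation $C(X_{\e_j})\cap\mathcal{S}_j=\{X_{\e_j}\}$ together with the defining property of $\e_j$.
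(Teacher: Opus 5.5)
Your proof is correct and is essentially the paper's argument. The paper counts the distinct visited vertices sphere by sphere, with the $\kk(n)$ entry points $X_{\e_1},\ldots,X_{\e_{\kk(n)}}$ counted twice, and then subtracts $\kk(n)+1$; you instead discard the initial vertex of each piece first, so that the sets $\mathcal{S}_j'$ and $\mathcal{S}^\ast$ genuinely partition $V\setminus\{o\}$ (exactly as in the range example after Definition~\ref{def: cone add}), and your check that $\{X_0,\ldots,X_n\}\cap\mathcal{S}_j=\{X_0,\ldots,X_{\e_j}\}\cap\mathcal{S}_j$ makes explicit a step the paper leaves implicit.

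One small wording slip: a time $i>\e_j$ with $X_i\in\mathcal{S}_j$ can occur, namely when $X_i=X_{\e_j}$. So ``the time $i$ satisfies $i\le\e_j$'' should read ``the vertex $X_i$ lies in $\{X_0,\ldots,X_{\e_j}\}$'', which is what your argument actually establishes.
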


\begin{proof}
Let be $n\in\N_0$ and consider the random walk path $[X_0,\ldots,X_{n}]$ up to time $n$. 
Let $V_1,\ldots V_{\mathbf{R}_n}$ denote the distinct (random) vertices in $\{X_0, X_1,\ldots, X_n\}$. Then $R\bigl([X_0,\ldots,X_n]\bigr)=\mathbf{R}_n-1$.
\par
Now we make the observation that there are three possibilities for each $V_j$, $j\in\{1,\ldots,\mathbf{R}_n\}$: either $V_j$ is a cone entry point, or it lies properly inside a sphere $\mathcal{S}_i$, $i\in\{1,\ldots,\kk(n)\}$, or $V_j\neq X_{\e_{\kk(n)}}$ belongs to $\pi_n^\ast$. More precisely, either 
$V_j= X_{\e_i}$ for some $i\in \{0,\ldots, \mathbf{k}(n)\}$ or 
$$
V_j\in \mathcal{S}_i^o:=C(X_{\e_{i-1}})\setminus \left( C(X_{\e_{i}}) \cup \{X_{\e_{i-1}}\} \right)
$$ 
for some $i\in \{1,\ldots, \mathbf{k}(n)\}$, or $V_j\in C(X_{\e_{\kk(n)}})\setminus\{X_{\e_{\kk(n)}}\}$.
If $V_j\in \mathcal{S}_i^o$ lies properly inside a sphere $\mathcal{S}_i$, then it appears in exactly one of the pieces $\pi_{1}, \ldots,\pi_{\mathbf{k}(n)}$. Thus, $V_j$ is counted in exactly one out of $R(\pi_1),\ldots,R(\pi_{\kk(n)})$.
If $V_j\in C(X_{\e_{\kk(n)}})\setminus\{X_{\e_{\kk(n)}}\}$ then $V_j$ appears only in $\pi_n^\ast$ and it is counted in 
 $R\bigl([X_{\e_{\kk(n)}},\ldots,X_n]\bigr)$.
\par
If $V_j =X_{\e_i}$ for some $i\in \{1,\ldots, \mathbf{k}(n)-1\}$, then 
$V_j$ appears in $\pi_{i}$ and $\pi_{i+1}$; if $V_j=X_{\e_0}=o$, then $V_j$  only appears  in $\pi_1$; if $V_j=X_{\e_{\kk(n)}}$ then $V_j$ appears in $\pi_{\kk(n)}$ and $\pi_n^*$. 
The subtraction by 1 in the definition of the function $R(\cdot)$ takes care of this double-counting, which we will show now. 
\par
Let $\{V_1^{(i)},\ldots, V_{m_i}^{(i)}\}$ denote those vertices in $\{V_1,\ldots, V_{\mathbf{R}_n}\}$ which lie inside $\mathcal{S}_i$ for $i\in\{1,\ldots, \mathbf{k}(n)\}$,
  and let $V_{m_i}^{(i)}=X_{\e_i}=V_{1}^{(i+1)}$ be the cone entry vertices (which -- by construction -- lie in $\pi_i$ and $\pi_{i+1}$) for $i<\mathbf{k}(n)$ and $V_{m_{\mathbf{k}(n)}}^{(\kk(n))}=X_{\e_{{\mathbf{k}(n)}}}$.
  Let $\{V_1^\ast,\ldots, V_{m^\ast}^\ast\}$  denote those vertices which lie in $\pi_n^\ast$ with $V_1^\ast=X_{\e_{\mathbf{k}(n)}}$.
 We now observe that 
 $$
 \sum_{i=1}^{\mathbf{k}(n)} \bigl|\{V_1,\ldots, V_{\mathbf{R}_n}\}\cap \mathcal{S}_i\bigr|+ \bigl|\{V_1^\ast,\ldots, V_{m^\ast}^\ast\}\bigr|=
 \sum_{i=1}^{\mathbf{k}(n)} m_i +m^\ast= \mathbf{R}_n+\mathbf{k}(n), 
 $$ since $\mathbf{k}(n)$ elements (namely, $X_{\e_1},\ldots,X_{\e_{\kk(n)}}$) are counted twice.
Thus, we get 
\begin{eqnarray*}
    \sum_{i=1}^{\mathbf{k}(n)} R(\pi_i)+R(\pi_n^\ast)&=& \sum_{i=1}^{\mathbf{k}(n)} (m_i -1)+m^\ast-1\\
&=& \mathbf{R}_n+\mathbf{k}(n) -(\mathbf{k}(n) +1)\\
&=& \mathbf{R}_n-1=R\bigl([X_0,\ldots,X_n]\bigr),
\end{eqnarray*}
which shows the additivity property.
\par
The shift invariance follows since the number of distinct points visited in $\pi_j=[U_0^{(j)},\ldots,U_{t_j}^{(j)}]$, $j\in\N$, does not change when we cancel the prefix $X_{\e_{j-1}}$ in each vertex of $\pi_j$, that is, 
\begin{eqnarray*}
R(\pi_j)&=&\bigl|\bigl\{U_0^{(j)},\ldots,U_{t_j}^{(j)}\bigr\}\bigr|-1 \\
&=& \bigl|\bigl\{X_{\e_{j-1}}^{-1}U_0^{(j)},\ldots,X_{\e_{j-1}}^{-1}U_{t_j}^{(j)}\bigr\}\bigr|-1 = R\bigl( X_{\e_{j-1}}^{-1}\pi_j\bigr).
\end{eqnarray*}
\end{proof}

Moreover, we have $R\bigl([X_0,\ldots,X_n]\bigr)\leq n+1$, thus Condition (\ref{cond: rest bounded}) does hold and  Theorem~\ref{thm:limit-theorem} yields existence of $\mathfrak{c}_{\mathrm{range}}>0$ with
$$ 
\mathfrak{c}_{\mathrm{range}}=\lim_{n\to\infty} \frac{R\bigl([X_0,\ldots,X_n]\bigr)}{n}= \lim_{n\to\infty} \frac{\mathbf{R}_n}{n}=\mathfrak{r} \quad \textrm{almost surely.}
$$

\subsubsection{Edge-Range of $r$ Visits}
Let us now focus on edges of the graph $\mathcal{X}$ instead of vertices in $V$. We want to count the number of edges which are visited exactly $r\in\N$ times until time $n$. Hereby, we  identify (oriented) edges by paths of length $2$. For this purpose, we denote by $\mathbf{E}_{n}^{(r)}$ the number of distinct edges $[X_{i-1},X_{i}]$, $i\in\{1,\ldots,n\}$, which are traversed  exactly $r$ times up to time $n$. 
More precisely, for $[u,v]\in \Pi$ with $p(u,v)>0$ and $[u_0,\ldots,u_m]\in \Pi$, we set
$$
N([u,v],\pi):=\Bigl| \bigl\{ j\in\{1,\ldots,m\} \,\bigl|\, [u_{j-1},u_j]=[u,v]\bigr\}\Bigr|,
$$
the number of visits of the edge $[u,v]$ in $\pi$.
Then
$$
\mathbf{E}_{n}^{(r)} =\sum_{e \in \bigl\{[X_{j-1},X_j] \,\bigl|\, j\in\{1,\ldots,n\}\bigr\}} \mathds{1}_{\{r\}}\bigl(N(e,[X_0,\ldots,X_n])\bigr).
$$

We are interested in the asymptotic behaviour of $\frac1n \mathbf{E}^{(r)}_{n}$ as $n\to\infty$.
For this purpose, we define the function
$$
R_{\text{edges}}^{(r)}:\Pi \to \mathbb{R}, \pi=[u_0,\ldots,u_m]\mapsto \sum_{e \in \bigl\{[u_{j-1},u_j] \,\bigl|\, j\in\{1,\ldots,m\}\bigr\}} \mathds{1}_{\{r\}}\bigl(N(e,\pi)\bigr),
$$ 
the number of distinct edges $[u_{i-1},u_i]$, $i\in\{1,\ldots,m\}$, of $\pi$ which are traversed exactly $r$ times by $\pi$.
\begin{lemma}\label{lem: edge-range cone add}
  The function  $R_{\text{edges}}^{(r)}$ is cone-additive. 
\end{lemma}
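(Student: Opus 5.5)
To show that $R_{\text{edges}}^{(r)}$ is cone-additive, we verify the additivity property and shift invariance separately. Both rest on one structural fact: the edge multiset of $[X_0,\ldots,X_n]$ splits disjointly into the edge multisets of $\pi_1,\ldots,\pi_{\kk(n)},\pi_n^\ast$. This works because edges are indexed by steps, not by vertices, so the double-counting issue from the vertex-range case does not arise.

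For additivity, fix $n$ and consider an edge $[X_{j-1},X_j]$ with $j\le n$. In the graph $\mathcal{X}$ every edge lies inside a single copy of some $\mathcal{X}_i$. Hence both endpoints of an edge lie in a common set of the form $\mathcal{S}_i=\bigl(C(X_{\e_{i-1}})\setminus C(X_{\e_i})\bigr)\cup\{X_{\e_i}\}$ or in $C(X_{\e_{\kk(n)}})$. The only vertices shared by two of these regions are the cone roots $X_{\e_i}$. Such an edge cannot have both endpoints equal to $X_{\e_i}$, because $p(x,x)=0$. Moreover, an edge with one endpoint $X_{\e_i}$ and the other endpoint in $C(X_{\e_i})\setminus\{X_{\e_i}\}$ is an interior edge of the cone, while an edge whose other endpoint lies outside $C(X_{\e_i})$ belongs to $\mathcal{S}_i$.

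Next we check that the restriction operations defining $\pi_i$ and $\pi_n^\ast$ keep precisely the steps of the trajectory whose edge lies in the corresponding region. Removing the subpaths lying outside (respectively inside) a cone, and merging the doubled root, removes exactly the steps crossing into the removed region. Each remaining consecutive pair in $\pi_i$ is therefore an actual step $[X_{j-1},X_j]$ of the trajectory, and conversely every trajectory step lands in exactly one piece. This is the same mechanism as in Lemma \ref{lem:path-restriction-well-defined}. It gives, for every edge $e$ in region $\mathcal{S}_i$,
$$
N\bigl(e,[X_0,\ldots,X_n]\bigr)=N(e,\pi_i),
$$
and the analogous identity for $\pi_n^\ast$. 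Here we use $n\ge \e_{\kk(n)}$ and the fact that after time $\e_i$ the walk never leaves $C(X_{\e_i})$, so no later step touches an edge of $\mathcal{S}_i$. Summing $\mathds{1}_{\{r\}}(N(e,\cdot))$ over the disjoint classes of distinct edges then yields
$$
R_{\text{edges}}^{(r)}\bigl([X_0,\ldots,X_n]\bigr)=\sum_{j=1}^{\kk(n)}R_{\text{edges}}^{(r)}(\pi_j)+R_{\text{edges}}^{(r)}(\pi_n^\ast).
$$

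The main obstacle is the bookkeeping in the restriction step. We must ensure that the merging of successive occurrences of the root vertex neither creates a fictitious edge nor deletes a real one. For instance, removing an excursion $[v,y_1,\ldots,v]$ outside $C(v)$ leaves $v$ followed by the next genuine step out of $v$. I would argue this directly from the definitions of $\pi|_{C(v)}$ and $\pi|_{\neg C(v)}$, applying them twice as in the definition of $\pi_k$.

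For shift invariance, observe that $v\mapsto X_{\e_{j-1}}^{-1}v$ is injective on $C(X_{\e_{j-1}})$, which contains all vertices of $\pi_j$, and preserves adjacency by Corollary \ref{cor:shifted-paths}. So it maps distinct edges of $\pi_j$ bijectively onto distinct edges of $X_{\e_{j-1}}^{-1}\pi_j$ and preserves the multiplicities $N(\cdot,\cdot)$. Hence $R_{\text{edges}}^{(r)}(\pi_j)=R_{\text{edges}}^{(r)}(X_{\e_{j-1}}^{-1}\pi_j)$.
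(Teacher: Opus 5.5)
Your proposal is correct and follows essentially the same route as the paper, which also argues that $\pi_1,\ldots,\pi_{\kk(n)},\pi_n^\ast$ partition the traversed edges by spheres, so that every visit of an edge occurs in exactly one piece, and which obtains shift invariance from prefix cancellation acting bijectively on the edges of $\pi_j$. You add detail the paper leaves implicit: the cut-vertex and no-loop argument, and the check that the restriction operations neither create nor delete trajectory steps.
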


\begin{proof}
The splitting of $[X_0,\ldots,X_n]$, $n\in\N$, into $\pi_1,\ldots,\pi_{\mathbf{k}(n)}, \pi_n^\ast $
%(\pi_i)_{i=1}^{\mathbf{k}(n)}
results in a disjoint partition w.r.t. the edges, that is, every traversed edge $[X_{i-1},X_i]$ lies in exactly one of the spheres $\mathcal{S}_j$ defined in Subsection~\ref{subsec: paths}. Thus, every visit of this edge happens only inside $\pi_j$. Therefore,
$$
R_{\text{edges}}^{(r)}\bigl([X_0,\ldots,X_n]\bigr)=\sum_{j=1}^{\mathbf{k}(n)} R_{\text{edges}}^{(r)}(\pi_j)+ R_{\text{edges}}^{(r)}(\pi_n^\ast) .
$$
This proves the additivity property.
The shift invariance follows from the construction of the free product, that is, $[X_{i-1},X_i]$ is a visited edge in $\pi_j$ if and only if $\bigl[X_{\e_{j-1}}^{-1}X_{i-1},X_{\e_{j-1}}^{-1}X_i\bigr]$ is a visited edge in $X_{\e_{j-1}}^{-1}\pi_j$.
\end{proof}

Now we obtain the following new result:
\begin{thm}\label{thm: range}  
Assume that $\mathscr{R}>1$.
Then there exists a constant $\mathfrak{c}_{edges}>0$ such that
$$
\mathfrak{c}_{edges}=\lim_{n\to\infty} \frac{\mathbf{E}^{(r)}_{n}}{n}
=\lim_{n\to\infty} \frac{R_{\text{edges}}^{(r)}\bigl([X_0,\ldots,X_n]\bigr)}{n}
\quad \textrm{ almost surely.}
$$
\end{thm}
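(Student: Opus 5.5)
The plan is to apply Theorem~\ref{thm:limit-theorem} to $f:=R_{\text{edges}}^{(r)}$. By Lemma~\ref{lem: edge-range cone add}, $f$ is cone-additive. By definition, $\mathbf{E}^{(r)}_n=R_{\text{edges}}^{(r)}\bigl([X_0,\ldots,X_n]\bigr)$, so the two limits in the statement coincide once one of them exists.

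Next we verify the growth condition (\ref{cond: rest bounded}). A path $\pi$ of length $m$ traverses at most $m$ distinct edges, so $0\le R_{\text{edges}}^{(r)}(\pi)\le \len(\pi)$. Hence $|f(\pi)|\le C\cdot R_f^{\len(\pi)}$ holds for any fixed $R_f\in(1,\mathcal{R})$ with a suitable constant $C$, since $m\le C R_f^m$. Since $\mathscr{R}>1$, Theorem~\ref{thm:limit-theorem} (equivalently Lemma~\ref{lem: to E[f(pi)]} together with Corollary~\ref{cor:expectation-psi2-restricted-finite}) yields the almost sure limit
\[
\mathfrak{c}_{edges}=\ell\cdot \mathbb{E}_\varrho\Bigl[R_{\text{edges}}^{(r)}\bigl(\psi_1|_{\neg C(\mathbf{W}_1)}\bigr)\Bigr]\ge 0 .
\]

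The main obstacle is strict positivity. The function $f$ is only nonnegative, not strictly positive, so we use the explicit formula from Corollary~\ref{cor:c-formula}. Since $\ell>0$, it suffices to find a single state $(g,\varphi)\in\mathcal{D}$ with $\varrho(g,\varphi)>0$ and $R_{\text{edges}}^{(r)}(\varphi|_{\neg C(g)})>0$; every state of $\mathcal{D}$ has positive invariant mass by irreducibility and positive recurrence (Propositions~\ref{prop: irred MC} and~\ref{prop:positive-recurrence}).

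To build such a state, choose $g\in V_1^\times$ with $p(o,g)>0$ and a path $\gamma$ from $o$ to $o$ inside $V_1^\ast\setminus C(g)$ that starts with an edge $e$ leaving $o$. Such a $\gamma$ exists because the factor graphs are reachable from the root and the walk in $V_1$ can return. For instance, take $e=[o,h]$ with $h\in V_2^\times$ and $p(o,h)>0$, where any path back to $o$ lies in the copy of $\mathcal{X}_2$ and thus avoids $V_1^\ast$. Since paths in $\psi_1$ must stay in $V_{\delta(g)}^\ast$, it is cleaner to use an excursion within $V_1^\ast$ that avoids $C(g)$; if this is impossible, take $\varphi=[o,g]$ itself.

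In the simplest case $\varphi=[o,g]$, we have $\varphi|_{\neg C(g)}=[o,g]$. This path traverses the single edge $[o,g]$ exactly once, so $f>0$ when $r=1$.

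For general $r$, let $\varphi$ consist of $r-1$ excursions $o\to g\to\cdots\to o$ inside $V_1^\ast$, followed by a final step $[o,g]$. Here the return from $g$ to $o$ uses a path in $\mathcal{X}_1$ that avoids revisiting the edge $[o,g]$ except at the start of each excursion. For this we need a path in $\mathcal{X}_1$ from $g$ back to $o_1$, which exists whenever $o_1$ is reachable from $g$. Then the edge $[o,g]$ is traversed exactly $r$ times, and such $\varphi$ arise as $\psi_1$ with positive probability. If $o_1$ is not reachable in $\mathcal{X}_1$, use the factor $\mathcal{X}_2$ instead, or a cycle through some other vertex of $V_1^\ast\setminus C(g)$.

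Checking that the constructed path lies in $\mathcal{D}$, and that exactly $r$ traversals of some edge survive the restriction $|_{\neg C(g)}$, is the delicate step. It requires care about which factor admits returns, and this is where the assumption $\varrho<1$ and the nondegeneracy of $P_1,P_2$ enter.
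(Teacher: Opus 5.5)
Your existence argument is the same as the paper's: cone-additivity from Lemma~\ref{lem: edge-range cone add}, the bound $0\le R^{(r)}_{\text{edges}}(\pi)\le\len(\pi)$ giving (\ref{cond: rest bounded}), and then Theorem~\ref{thm:limit-theorem}. The paper's proof ends at that point. However, the positivity clause of Theorem~\ref{thm:limit-theorem} requires $f$ to be strictly positive, and $R^{(r)}_{\text{edges}}$ is not (e.g.\ it vanishes on $[o]$). So you are right that $\mathfrak{c}_{edges}>0$ needs a separate argument through Corollary~\ref{cor:c-formula}.

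Your argument is correct for $r=1$. The state $(g,[o,g])$ lies in $\mathcal{D}$, because $\P[X_1=g,\ \forall t\ge 1: X_t\in C(g)]=p(o,g)(1-\xi_1)>0$. Every state of $\mathcal{D}$ has positive $\varrho$-mass, and $R^{(1)}_{\text{edges}}([o,g])=1$.

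For $r\ge 2$ there is a genuine gap, and it cannot be closed. Your construction needs a directed return path, that is, a directed cycle in some factor graph. Nothing in the hypotheses supplies one; in particular $\varrho<1$ is a transience condition and does not help.

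Here is a counterexample. Take $V_1=\{o_1,a_1,a_2,\ldots\}$ with $p_1(o_1,a_1)=p_1(a_j,a_{j+1})=1$, and define $V_2=\{o_2,b_1,b_2,\ldots\}$ in the same way. All standing assumptions hold: there are no loops, and every vertex is reachable from its root. Since $\P[X_n=o]=0$ for $n\ge1$, we have $G(o,o|z)\equiv 1$, so $\mathscr{R}=\infty$. Every step of $(X_n)$ either appends a letter or replaces the last letter $a_j$ by $a_{j+1}$ (resp.\ $b_j$ by $b_{j+1}$). The word length never decreases, and at constant length the index of the last letter strictly increases. Hence no vertex, and therefore no edge, is ever visited twice. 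Thus $\mathbf{E}^{(r)}_n=0$ for all $n$, i.e.\ $\mathfrak{c}_{edges}=0$ for every $r\ge 2$. Your fallbacks (switch to $\mathcal{X}_2$, or use a cycle through another vertex) fail exactly in this situation.

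What your method does prove is the following:
\begin{itemize}
\item $\mathfrak{c}_{edges}\ge 0$ in general;
\item $\mathfrak{c}_{edges}>0$ for $r=1$;
\item $\mathfrak{c}_{edges}>0$ for $r\ge 2$ under an extra hypothesis, for instance that $\mathcal{X}_1$ or $\mathcal{X}_2$ contains a directed cycle. This is automatic if $V_1$ or $V_2$ is finite.
\end{itemize}
In that last case, pick a vertex $x\neq o_i$ on a simple directed cycle in $\mathcal{X}_i$. Let $\varphi$ be a shortest path from $o$ to $x$ in the copy of $\mathcal{X}_i$ attached at $o$, followed by $r$ rounds of the cycle. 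Then $(x,\varphi)\in\mathcal{D}$, because the walk can follow $\varphi$ and then stay in $C(x)$ forever. Also $\varphi|_{\neg C(x)}=\varphi$, and the cycle edge leaving $x$ is traversed exactly $r$ times. So your Corollary~\ref{cor:c-formula} argument goes through.
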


\begin{proof}
By Lemma~\ref{lem: edge-range cone add}, $R^{(r)}_{\text{edges}}$ is cone-additive. Moreover, we obviously have $R^{(r)}_{\text{edges}}\bigl([X_0,\ldots,X_n]\bigr)\leq n$, thus  Condition (\ref{cond: rest bounded}) in Theorem~\ref{thm:limit-theorem} does hold, and the claim follows now immediately from that theorem.
\end{proof}

\begin{remark}[Generalisations]
The above theorem remains true when replacing $R_r^{\text{edges}}$ by other quantities, like the number of edges visited at least $r$-times, at most $r$-times, an even/odd number of times, and so on.
\end{remark}

\subsubsection{Range of $r$ Visits}
In the case when we want to count the number of vertices which are visited exactly $r\in\N$ times by the random walk $(X_n)_{n\in\N_0}$, the situation becomes a little bit more complicated. We are not able to apply Theorem \ref{thm:limit-theorem} directly; however, we can use several results from Section \ref{sec:proof-of-LLT} to prove an analogous limit theorem.
\par 
For this purpose, define for $u\in V$ and $[u_0,\ldots,u_m]\in \Pi$, $m\in\N$,
$$
N(u,\pi):=\bigl| \bigl\{ j\in\{0,1,\ldots,m\} \,\bigl|\, u_j=u\bigr\}\bigr|,
$$
the number of visits of $u$ in $\pi$.
Then
$$
\mathbf{R}_{n}^{(r)} =\sum_{u \in \{X_0,\ldots,X_n\}} \mathds{1}_{\{r\}}\bigl(N(u,[X_0,\ldots,X_n])\bigr)
$$
is the number of vertices in $[X_0,\ldots,X_n]$ which are visited exactly $r$ times up to time $n$.
We are interested in the asymptotic behaviour of $\frac1n \mathbf{R}^{(r)}_{n}$ as $n\to\infty$.
For this purpose, we define the auxiliary function
$$
R^{(r)}:\Pi \to \mathbb{R},  \pi=[u_0,\ldots,u_m]\mapsto \sum_{u \in \{u_0,\ldots,u_{m}\}} \mathds{1}_{\{r\}}\bigl(N(u,\pi)\bigr),
$$ 
the number of distinct vertices in $\{u_0,\ldots,u_{m}\}$ which are visited exactly $r$ times by $\pi$. 
%The starting point $u_0$ and the endpoint $u_m$ are not taken into account since they will play a special role as we will see later.
\par
Recall the definition of $\mathcal{D}$ from Subsection \ref{subsec:process-Wk-psik}. We define the function $R_2^{(r)}: \mathcal{D}^2 \to\N_0$ by
\begin{eqnarray*}
&&R_2^{(r)} \bigl((g_1,\varphi_1),(g_2,\varphi_2)\bigr)\\
&:=&
\sum_{v\in \{v_1,\ldots,v_{m-1}\}\setminus\{v_0,v_m\}} \mathds{1}_{\{r\}}\bigl(N(v,\varphi_1\bigl|_{\neg C(g_1)})\bigr)\\
&&\quad +\mathds{1}_{\{r\}}\Bigl(N(g_1,\varphi_1\bigl|_{\neg C(g_1)})+N(o,\varphi_2\bigl|_{\neg C(g_2)})-1 \Bigr),
\end{eqnarray*}
where $\varphi_1\bigl|_{\neg C(g_1)}=[v_0,\ldots,v_m]$, $m\in\N$.

In the present setting, we will show a property of $R^{(r)}$ which is an adapted version of cone-additivity. For this purpose, define for $n\in\N$
\begin{eqnarray*}
&&\widehat R^{(r)}\bigl([X_0,\ldots,X_n]\bigr) \\
&:=& R^{(r)}(\pi_1) + \sum_{i=2}^{\kk(n)} R_2^{(r)}\bigl((\W_i,X_{\e_{i-1}}^{-1}\pi_i),(\W_{i+1},X_{\e_{i}}^{-1}\pi_{i+1})\bigr)+ R^{(r)}(\pi_n^\ast).
\end{eqnarray*}

\begin{lemma}
For all $n\in\N$, we have almost surely
$$
\Bigl| R^{(r)}\bigl([X_0,\ldots,X_n]\bigr)-\widehat R^{(r)}\bigl([X_0,\ldots,X_n]\bigr)\Bigr|\leq 3.
$$
\end{lemma}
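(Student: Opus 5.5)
We compare the two quantities vertex by vertex, exploiting the disjoint decomposition of the visited vertex set used in the proof of cone-additivity of the range. Every vertex $v\in\{X_0,\ldots,X_n\}$ lies in exactly one of three classes. It is either a cone entry point $X_{\e_i}$, $i\in\{0,\ldots,\kk(n)\}$; or an interior point of some sphere, $v\in\mathcal{S}_i^o$ with $i\le \kk(n)$; or a point of $C(X_{\e_{\kk(n)}})\setminus\{X_{\e_{\kk(n)}}\}$.

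\emph{Interior points.} For an interior point of $\mathcal{S}_i$, all visits of $v$ up to time $n$ occur inside $\pi_i$. Hence $N(v,[X_0,\ldots,X_n])=N(v,\pi_i)$. Moreover, $\pi_i$ is the path $X_{\e_{i-1}}\bigl(\psi_i|_{\neg C(\W_i)}\bigr)$, so by Lemma \ref{lem:cone-probs} and the identity $X_{\e_{i-1}}^{-1}\pi_i=\psi_i|_{\neg C(\W_i)}$ this count equals the count of $X_{\e_{i-1}}^{-1}v$ in $X_{\e_{i-1}}^{-1}\pi_i$. Such a vertex is neither the first vertex $v_0=o$ nor the last vertex $v_m=\W_i$ of the shifted path. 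Therefore its contribution $\mathds{1}_{\{r\}}(N(v,\cdot))$ appears exactly once: in the first sum of $R_2^{(r)}$ for index $i\in\{2,\ldots,\kk(n)\}$, in $R^{(r)}(\pi_1)$ for $i=1$, or in $R^{(r)}(\pi_n^\ast)$ for the last class.

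\emph{Entry points.} For a cone entry point $X_{\e_i}$ with $1\le i<\kk(n)$, the visits split between $\pi_i$, where $X_{\e_i}$ is the endpoint, and $\pi_{i+1}$, where it is the starting point. Because successive double occurrences are removed in the restriction, the true count is $N(X_{\e_i},\pi_i)+N(X_{\e_i},\pi_{i+1})-1$. The $-1$ is the single time index at which the walk sits at $X_{\e_i}$ and which is represented as the last vertex of $\pi_i$ and the first of $\pi_{i+1}$; the final visit at time $\e_i$ is shared. I would check this by viewing the restriction to $\mathcal{S}_i$ and $\mathcal{S}_{i+1}$ as a partition of the time indices, with exactly one duplicated index at time $\e_i$. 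After shifting, $X_{\e_i}$ corresponds to $g_1=\W_i$ in $X_{\e_{i-1}}^{-1}\pi_i$ and to $o$ in $X_{\e_i}^{-1}\pi_{i+1}$. This is precisely the correction term $\mathds{1}_{\{r\}}\bigl(N(g_1,\cdot)+N(o,\cdot)-1\bigr)$ in $R_2^{(r)}$, which is therefore exact for $2\le i\le \kk(n)-1$.

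\emph{Boundary errors.} What remains are the boundary terms, which produce the bounded error. The vertex $X_{\e_1}$ is counted inside $R^{(r)}(\pi_1)$ using only its visits in $\pi_1$, and possibly again in $R^{(r)}(\pi_n^\ast)$ or nowhere correctly; this gives an error of at most $1$. The vertex $X_{\e_{\kk(n)}}$ is treated by the $R_2^{(r)}$ term with index $\kk(n)$. That term uses $\pi_{\kk(n)+1}$, which looks beyond time $n$, while $R^{(r)}(\pi_n^\ast)$ counts it separately; this gives an error of at most $2$. The vertex $o=X_{\e_0}$ appears only in $\pi_1$, so its count there is correct. Each misattributed vertex changes a $\{0,1\}$-valued indicator, so the total discrepancy is at most $3$. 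Degenerate cases such as $\kk(n)\le 1$, where the middle sum is empty, I would handle in the same way.

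The main obstacle is the careful bookkeeping at the entry points, specifically justifying the identity $N(X_{\e_i},[X_0,\ldots,X_n])=N(X_{\e_i},\pi_i)+N(X_{\e_i},\pi_{i+1})-1$ given the elimination of repeated occurrences in the restriction maps. I would first establish it for complete excursions between $\e_i$ and $\e_{i+1}\le n$. I would then isolate precisely which terms, namely those at $X_{\e_1}$ and $X_{\e_{\kk(n)}}$ and any term involving $\pi_{\kk(n)+1}$, fail, and bound each by $1$.
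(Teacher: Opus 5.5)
Your proposal is correct and follows the paper's proof: the same three-way classification of visited vertices, exact counting for sphere-interior points and for the entry points $X_{\e_i}$, $2\le i\le\kk(n)-1$, through the correction term of $R_2^{(r)}$, and a bounded boundary error. Your boundary bookkeeping is in fact more careful than the paper's: $o$ is counted exactly, $X_{\e_1}$ is off by at most $1$, and $X_{\e_{\kk(n)}}$ appears both in the index-$\kk(n)$ term and in $R^{(r)}(\pi_n^\ast)$.

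Two small corrections. First, the first vertex of $\pi_{i+1}$ is the \emph{first} visit to $X_{\e_i}$, which can happen strictly before $\e_i$, so the doubly counted visit is that first one and not the one at time $\e_i$. Every later visit is preceded by an excursion either outside $C(X_{\e_i})$, which is recorded in $\pi_i$, or inside $C(X_{\e_i})\setminus\{X_{\e_i}\}$, which is recorded in $\pi_{i+1}$. Second, the case $n<\e_1$ cannot be treated the same way, since $R^{(r)}(\pi_1)$ then depends on the path after time $n$; the estimate should be read for $n\ge\e_1$.
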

\begin{proof}
Let be $n\in\N$.
For any $X_j$, $j\in\{0,\ldots,n\}$, we have that either 
$$
X_j\in \mathcal{S}_i^o:=C(X_{\e_{i-1}})\setminus \bigl( C(X_{\e_i})\cup\{X_{\e_{i-1}}\}\bigr) \quad \textrm{for some $i\in\{1,\ldots,\kk(n)\}$},
$$  
or $X_j=X_{\e_i}$ for some $i\in\{0,\ldots,\kk(n)\}$ or $X_j\in C(X_{\e_{\kk(n)}})\setminus \{X_{\e_{\kk(n)}}\}$.  Observe that $\mathcal{S}_1^o,\ldots,\mathcal{S}_{\kk(n)}^o$ are pairwise disjoint.  
\par
In the first case $X_j\in\mathcal{S}_i^o$  appears only in $\pi_i$, and $X_j$ is visited in $\pi_i$ exactly $r$ times if and only if $X_{\e_{i-1}}^{-1}X_{j}$ is visited exactly $r$ times in $X_{\e_{i-1}}^{-1}\pi_i$. That is, if $X_{j}\in\mathcal{S}_i^o$ is visited exactly $r$ times then it is exactly counted once in $R^{(r)}(\pi_1)$, if $i=1$, or in $R_2^{(r)}\bigl((\W_i,X_{\e_{i-1}}^{-1}\pi_i),(\W_{i+1},X_{\e_{i}}^{-1}\pi_{i+1})\bigr)$ for $i\in\{2,\ldots,\kk(n)\}$. 
\par
If $X_j\in C(X_{\e_{\kk(n)}})\setminus \{X_{\e_{\kk(n)}}\}$  is visited exactly $r$ times up to time $n$, then it is counted once in $R^{(r)}(\pi_n^\ast)$. 
\par
If $X_j=X_{\e_i}$ for some $i\in\{2,\ldots,\kk(n)-1\}$ and $X_j$ is exactly visited $r$ times then it is counted once by  $\mathds{1}_{\{r\}}\bigl(N(\W_i,X_{\e_{i-1}}^{-1}\pi_i)+N(o,X_{\e_{i}}^{-1}\pi_{i+1})-1 \bigr)$. In this case the number of visits of $X_{\e_i}$ splits up into the number of visits to $X_{\e_i}$, where the preceding step comes from the outside of $C(X_{\e_i})$ (counted in $\pi_i$) or from the interior of $C(X_{\e_i})$ (counted in $\pi_{i+1}$). Since the last point of $\pi_i$ corresponds to the first point of $\pi_{i+1}$, we have to subtract one count of $X_{\e_i}$.
%This explains why we excluded the starting and end points in the definition of $R_2^{(r)}$. 
\par
The only points not considered so far are $X_0$, $X_{\e_1}$, and $X_{\e_{\kk(n)}}$ which play some special role: each of this points may be counted at most once in $R^{(r)}\bigl([X_0,\ldots,X_n\bigr])$ but maybe not in $\widehat R^{(r)}\bigr([X_0,\ldots, X_n]\bigr)$, or vice versa.
% $X_{\e_1}$ may be counted once in $R^{(r)}\bigl([X_0,\ldots,X_n\bigr])$ while it is definitely not counted in $\widehat R^{(r)}\bigr([X_0,\ldots, X_n]\bigr)$; the point $X_{\e_{\kk(n)}}$, however, is maybe counted in  $R_2^{(r)}\bigl((\W_{\kk(n)},X_{\e_{\kk(n)-1}}^{-1}\pi_{\kk(n)}),(\W_{\kk(n)+1},X_{\e_{\kk(n)}}^{-1}\pi_{\kk(n)+1})\bigr)$ but maybe it is not counted in $R^{(r)}\bigl([X_0,\ldots,X_n\bigr])$. 
This explains the possible difference of at most $3$ between $R^{(r)}\bigl([X_0,\ldots,X_n\bigr])$ and $\widehat R^{(r)}\bigr([X_0,\ldots, X_n]\bigr)$, which finally proves the proposed inequality. 
\end{proof}
Since we obviously have $\mathbf{R}_n^{(r)} =R^{(r)}\bigl([X_0,\ldots,X_n]\bigr)$
% $$
% \mathbf{R}_n^{(r)} = R^{(r)}\bigl([X_0,\ldots,X_n]\bigr)+
% \sum_{u\in\{o,X_n\}}
% \mathds{1}_{\{r\}}\bigl(N(u,[X_0,\ldots,X_n])\bigr)
% $$
%+\mathds{1}_{\{r\}}\bigl(N(X_n,[X_0,\ldots,X_n])\bigr)\\
the last lemma yields
\begin{equation}\label{equ:range-estimate}
\Bigl| \mathbf{R}_n^{(r)} -\widehat R^{(r)}\bigl([X_0,\ldots,X_n]\bigr)\Bigr|\leq 3.
\end{equation}

Now we can prove the next new result:

\begin{thm} \label{thm:r-range}  
Assume that $\mathscr{R}>1$.
Then there exists a constant $\mathfrak{c}_{\mathrm{range}}^{(r)}>0$ such that
$$
\mathfrak{c}_{\mathrm{range}}^{(r)}=\lim_{n\to\infty} \frac{\mathbf{R}_{n}^{(r)}}{n}
=\lim_{n\to\infty} \frac{R^{(r)}\bigl([X_0,\ldots,X_n]\bigr)}{n}
\quad \textrm{ almost surely.}
$$
\end{thm}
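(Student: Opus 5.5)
By the estimate (\ref{equ:range-estimate}), it suffices to show that $\frac1n \widehat R^{(r)}\bigl([X_0,\ldots,X_n]\bigr)$ converges almost surely to a positive constant. We treat the three pieces of $\widehat R^{(r)}$ separately, mimicking the proof of Lemma~\ref{lem: to E[f(pi)]}.

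\textbf{Boundary terms.} The first piece $R^{(r)}(\pi_1)$ is almost surely finite, since $\e_1<\infty$. Hence $R^{(r)}(\pi_1)/n\to 0$, as in Lemma~\ref{lem: beginning to 0}. The last piece satisfies $0\le R^{(r)}(\pi_n^\ast)\le \len(\pi_n^\ast)+1\le \len(\psi_{\kk(n)+1})+1$. With $R=2<\mathcal{R}$ replaced by any $R\in(1,\mathcal{R})$, Lemmas~\ref{lem:R-psi-k-zero} and \ref{lem: to E[T_2-T_1]} give $R^{(r)}(\pi_n^\ast)/n\to 0$. This is exactly the argument of Lemma~\ref{lem: end to 0}, because linear growth is dominated by $C R^{\len}$.

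\textbf{Middle sum.} For $i\ge2$ we have $X_{\e_{i-1}}^{-1}\pi_i=\psi_i|_{\neg C(\W_i)}$. Consequently each summand equals $F\bigl((\W_i,\psi_i),(\W_{i+1},\psi_{i+1})\bigr)$ with
\[
F\bigl((g_1,\varphi_1),(g_2,\varphi_2)\bigr):=R_2^{(r)}\bigl((g_1,\varphi_1|_{\neg C(g_1)}),(g_2,\varphi_2|_{\neg C(g_2)})\bigr).
\]
This is a function of two consecutive states of the Markov chain $\bigl((\W_k,\psi_k)\bigr)_{k\in\N}$. By Propositions~\ref{prop: irred MC} and \ref{prop:positive-recurrence}, the chain is irreducible and positive recurrent with invariant law $\varrho$. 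Therefore the pair chain $Y_k:=\bigl((\W_k,\psi_k),(\W_{k+1},\psi_{k+1})\bigr)$ is also irreducible (on its support) and positive recurrent, with invariant law $\varrho(x)q(x,y)$.

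The ergodic theorem applied to $Y_k$ then gives
\[
\frac1{\kk(n)}\sum_{i=2}^{\kk(n)}F(Y_i)\to \E_\varrho\bigl[F(Y_1)\bigr] \quad \textrm{almost surely},
\]
provided this expectation is finite. Integrability follows from $0\le F(Y_i)\le \len(\psi_i)+1$ together with Proposition~\ref{prop:E-varrho-finite}, since $\E_\varrho[\len(\psi_1)]\le C\,\E_\varrho[R^{\len(\psi_1)}]<\infty$. Combining this with Lemma~\ref{lem: to E[T_2-T_1]}, we obtain
\[
\lim_{n\to\infty}\frac{\mathbf{R}^{(r)}_n}{n}=\ell\cdot \E_\varrho\bigl[F(Y_1)\bigr]=:\mathfrak{c}^{(r)}_{\mathrm{range}} \quad \textrm{almost surely}.
\]

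\textbf{Positivity (main obstacle).} The delicate step is showing $\mathfrak{c}^{(r)}_{\mathrm{range}}>0$. It suffices to exhibit a pair $(x,y)$ of consecutive states with $\varrho(x)q(x,y)>0$ and $F(x,y)\ge1$. Positivity of $\varrho$ on all of $\mathcal{D}$ comes from irreducibility.

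For $r=1$, take the states $x=(g',[o,g'])$ and $y=(g'',[o,g''])$ used in the proof of Proposition~\ref{prop:positive-recurrence}. Their transition probability is positive, and the cone-entry vertex $X_{\e_i}$ is visited exactly once, so the indicator term in $R_2^{(r)}$ equals $1$.

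For $r\ge2$, choose $\varphi_1\in\mathcal{D}$ that contains an interior vertex $v\ne o,g_1$ of $\varphi_1|_{\neg C(g_1)}$ visited exactly $r$ times. Such a path can be built by going back and forth along an edge $o\to v\to o$ in the factor containing $g_1$, with $v\neq g_1$, provided a return to $o$ is possible. If no return is possible in that factor, use the other letter type instead, or apply the same construction to the indicator term at the cone-entry vertex, using the excursions into $C(g_1)$ recorded in $\varphi_2$.

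I expect the case analysis ensuring that such $r$-fold visits exist, given the standing assumption that every vertex is reachable from $o_i$, to be the only fiddly part. If it fails for some degenerate $P_i$, the positivity claim would have to be weakened to $\mathfrak{c}\ge0$.
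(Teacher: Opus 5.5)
Your existence argument is the paper's own. You discard $R^{(r)}(\pi_1)/n$ and $R^{(r)}(\pi_n^\ast)/n$ as in Lemmas~\ref{lem: beginning to 0} and \ref{lem: end to 0}, apply the ergodic theorem to the pair chain $\bigl((\W_k,\psi_k),(\W_{k+1},\psi_{k+1})\bigr)$ with invariant law $\varrho(x)q(x,y)$, take integrability from Proposition~\ref{prop:E-varrho-finite}, and transfer the limit to $\mathbf{R}^{(r)}_n$ via (\ref{equ:range-estimate}). Your constant $\ell\cdot\E[F]$ (expectation under $\varrho(x)q(x,y)$) is in fact the correct one. The paper's display passes from $\frac1n\sum_{i=2}^{\kk(n)}$ directly to the $\varrho_2$-expectation and drops the factor $\kk(n)/n\to\ell$. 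The paper's proof never addresses $\mathfrak{c}^{(r)}_{\mathrm{range}}>0$. Your $r=1$ argument fills this in correctly: for $x=(g',[o,g'])$ and $y=(g'',[o,g''])$ the indicator in $R_2^{(1)}$ is $\mathds{1}_{\{1\}}(1+1-1)=1$, so $\mathfrak{c}^{(1)}_{\mathrm{range}}\geq \ell\,\varrho(x)q(x,y)>0$.

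For $r\geq 2$ your sketch leaves a genuine gap, and it cannot be closed, because positivity is false under the paper's standing assumptions. Take $V_1=\{o_1,a_1,a_2,\ldots\}$ with $p_1(o_1,a_1)=p_1(a_k,a_{k+1})=1$, and define $V_2=\{o_2,b_1,b_2,\ldots\}$ in the same way. Every vertex is reachable from its root and there are no loops. Moreover $\P[X_n=o]=0$ for all $n\geq1$, so $\mathscr{R}=\infty$. Each step of $(X_n)$ either appends $a_1$ or $b_1$, or raises the index of the last letter by one. Hence the sum of the letter indices of $X_n$ equals $n$, no vertex is ever revisited, $\mathbf{R}^{(r)}_n=0$ for all $r\geq 2$, and $\mathfrak{c}^{(r)}_{\mathrm{range}}=0$. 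Your fallback of switching to the other factor cannot help, since neither factor admits any return.

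The correct conclusion is $\mathfrak{c}^{(r)}_{\mathrm{range}}\geq 0$. For $r\geq2$, strict positivity holds exactly when some $\mathcal{X}_i$ contains a directed cycle, which is automatic if $V_1$ or $V_2$ is finite. Necessity: every simple cycle of $\mathcal{X}$ lies in a single copy of a factor, since copies are glued at single cut vertices in a tree-like way. Sufficiency is your indicator-term idea made concrete:
\begin{enumerate}
\item Pick a cycle vertex $u\neq o_i$. Let $\varphi$ be a shortest path from $o$ to $u$ inside $V_i$, followed by $r-1$ turns around the cycle.
\item Then $(u,\varphi)\in\D$ and $N\bigl(u,\varphi|_{\neg C(u)}\bigr)=N(u,\varphi)=r$.
\item For $y=(g'',[o,g''])$ with $\delta(g'')\neq i$, one has $q\bigl((u,\varphi),y\bigr)>0$, and the indicator in $R_2^{(r)}$ equals $\mathds{1}_{\{r\}}(r+1-1)=1$.
\end{enumerate}
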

\begin{proof}
First, observe that $\Bigl(\bigl((\W_k,\psi_k),(\W_{k+1},\psi_{k+1})\bigr)\Bigr)_{k\in\N_0}$ is also a positive-recurrent, irreducible Markov chain with equilibrium 
$$
\varrho_2\bigl((g_1,\varphi_1),(g_2,\varphi_2)\bigr):=
\varrho(g_1,\varphi_1)\cdot q\bigl((g_1,\varphi_1),(g_2,\varphi_2)\bigr),
$$
where $\varrho(\cdot)$ is the equilibrium of $\bigl((\W_k,\psi_k)\bigr)_{k\in\N_0}$ and $q(\cdot,\cdot)$ describes its transition probabilities. 
\par
Obviously, we have
$$
\lim_{n\to\infty} \frac{R^{(r)}(\pi_1)}{n}=0 \quad \textrm{ almost surely},
$$
since $R^{(r)}(\pi_1)\leq \e_1<\infty$ almost surely, and
$$
\lim_{n\to\infty} \frac{R^{(r)}(\pi_n^\ast)}{n}=0 \quad \textrm{ almost surely},
$$
which follows from $R^{(r)}(\pi_n^\ast)\leq \mathrm{length}(\pi_n^\ast)$ together with Lemma \ref{lem: end to 0}. Furthermore, the Ergodic Theorem for positive recurrent Markov chains yields
\begin{eqnarray*}
&&\frac1n \sum_{i=2}^{\kk(n)} R_2^{(r)}\bigl((\W_i,X_{\e_{i-1}}^{-1}\pi_i),(\W_{i+1},X_{\e_{i}}^{-1}\pi_{i+1})\bigr) \\
&=&\frac1n \sum_{i=2}^{\kk(n)} R_2^{(r)}\bigl((\W_i,\psi_i\bigl|_{\neg C(\W_i)}),(\W_{i+1},\psi_{i+1}\bigl|_{\neg C(\W_{i+1})})\bigr) \\
&\xrightarrow{n \to\infty} &
\underbrace{\sum_{\substack{(g_1,\varphi_1),\\(g_2,\varphi_2)\in\mathcal{D}}}\varrho_2\bigl((g_1,\varphi_1),(g_2,\varphi_2)\bigr) R_2^{(r)}\bigl((g_1,\varphi_1\bigl|_{\neg C(g_1)}),(g_2,\varphi_2\bigl|_{\neg C(g_2)})\bigr)}_{=:\mathfrak{c}_{\mathrm{range}}^{(r)}},
\end{eqnarray*}
where the sum on the right-hand side is finite since
\begin{eqnarray*}
&&\sum_{\substack{(g_1,\varphi_1),\\(g_2,\varphi_2)\in\mathcal{D}}}\varrho_2\bigl((g_1,\varphi_1),(g_2,\varphi_2)\bigr) \cdot R_2^{(r)}\bigl((g_1,\varphi_1\bigl|_{\neg C(g_1)}),(g_2,\varphi_2\bigl|_{\neg C(g_2)})\bigr)\\
&\leq & \sum_{(g_1,\varphi_1),(g_2,\varphi_2)\in\mathcal{D}}\varrho_2\bigl((g_1,\varphi_1),(g_2,\varphi_2)\bigr)\cdot \mathrm{length}(\varphi_1\bigl|_{\neg C(g_1)})\\
&=& \sum_{(g_1,\varphi_1)\in\mathcal{D}}\varrho\bigl((g_1,\varphi_1)\bigr) \cdot \mathrm{length}(\varphi_1\bigl|_{\neg C(g_1)})<\infty,
\end{eqnarray*}
where finiteness follows from Corollary \ref{cor:expectation-psi2-restricted-finite}. Therefore,
$$
\mathfrak{c}_{\mathrm{range}}^{(r)}=\lim_{n\to\infty} \frac{\widehat R^{(r)}\bigl([X_0,\ldots,X_n]\bigr)}{n}\quad \textrm{ almost surely.}
$$
From (\ref{equ:range-estimate}) follows now the proposed claim that
$$
\mathfrak{c}^{(r)}=\lim_{n\to\infty} \frac{\mathbf{R}^{(r)}_n}{n}\quad \textrm{ almost surely.}
$$
\end{proof}

\subsection{Drift}

We present some limit theorems in the context of the drift of the random walk $(X_n)_{n\in\N_0}$.

\subsubsection{Graph Distance}
For $u,v\in V$, we write
$$
d_\mathcal{X}(u,v):= \min\bigl\{ \mathrm{length}(\pi) \,\bigl|\, \pi \textrm{ is a path from $u$ to $v$}\bigl\}
$$
for the graph distance of $u$ and $v$ in $\mathcal{X}$. We set
$$
f:\Pi \to \N_0, \, [u_0,\ldots,u_m] \mapsto d_\mathcal{X}(u_0,u_m),
$$
the graph distance between the starting and end point of $\pi$.

\begin{lemma}
    The function $f$ is cone-additive.
\end{lemma}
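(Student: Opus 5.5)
The plan is to mimic the entropy argument: first prove that the graph distance is additive along cut vertices, then iterate along the final cone entry points. The basic observation is as follows. Suppose every path in $\mathcal{X}$ from $u$ to $w$ passes through $v$. Then
\[
d_\mathcal{X}(u,w)=d_\mathcal{X}(u,v)+d_\mathcal{X}(v,w).
\]
The inequality ``$\le$'' holds by concatenating a shortest path from $u$ to $v$ with a shortest path from $v$ to $w$. The inequality ``$\ge$'' holds because a shortest path from $u$ to $w$ must visit $v$, and the two pieces before and after that visit are paths from $u$ to $v$ and from $v$ to $w$.

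We apply this to $u=X_0=o$ and $w=X_n$, with cut vertices $v=X_{\e_1},\ldots,X_{\e_{\kk(n)}}$. Since $X_n\in C(X_{\e_{\kk(n)}})\subset\cdots\subset C(X_{\e_1})$, and a cone $C(x)$ can only be entered through its root $x$, every path from $X_{\e_{i-1}}$ to $X_n$ passes through $X_{\e_i}$. The same holds for every path from $X_{\e_{i-1}}$ to $X_{\e_j}$ with $j\ge i$. Iterating the identity gives
\[
d_\mathcal{X}(X_0,X_n)=\sum_{i=1}^{\kk(n)} d_\mathcal{X}(X_{\e_{i-1}},X_{\e_i})+d_\mathcal{X}(X_{\e_{\kk(n)}},X_n).
\]
Each $\pi_i$ is a path from $X_{\e_{i-1}}$ to $X_{\e_i}$, and $\pi^\ast_n$ is a path from $X_{\e_{\kk(n)}}$ to $X_n$. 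These endpoint facts were noted in the entropy subsection and follow from the construction of the restrictions. Since $f$ depends only on the endpoints, the right-hand side equals $\sum_i f(\pi_i)+f(\pi_n^\ast)$, which is the additivity property.

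For shift invariance we must show $d_\mathcal{X}(x,y)=d_\mathcal{X}(o,x^{-1}y)$ for $y\in C(x)$, where $x=X_{\e_{j-1}}$ and $y=X_{\e_j}$. For $j=1$ this is trivial, since $x=o$.

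The step needing care is the claim that a shortest path from $x$ to $y\in C(x)$ can be taken inside $C(x)$. A path from $x$ to $y$ might leave $C(x)$, but it can only leave and re-enter through $x$. Any excursion outside $C(x)$ is therefore a loop from $x$ back to $x$. Deleting such loops shortens the path, so a geodesic stays in $C(x)$. Likewise, geodesics from $o$ to $x^{-1}y$ avoid any loop back at $o$.

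With this in hand, Corollary~\ref{cor:shifted-paths} gives a length-preserving bijection $\pi\mapsto x^{-1}\pi$ between paths from $x$ to $y$ inside $C(x)$ and paths from $o$ to $x^{-1}y$. Its inverse is $\pi\mapsto x\pi$, which is legitimate because $x^{-1}y$ either is $o$ or starts with a letter of type different from $\delta(x)$. In the latter case every vertex of such a geodesic lies in $V^\ast_{\delta(x^{-1}y)}$: leaving it would again require a loop at $o$. Taking minimal lengths on both sides yields $f(\pi_j)=f\bigl(X_{\e_{j-1}}^{-1}\pi_j\bigr)$.
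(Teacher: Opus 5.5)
Your proof is correct and follows the same route as the paper. Additivity comes from the fact that every path from $X_0$ to $X_n$ must pass through the cone roots $X_{\e_1},\ldots,X_{\e_{\kk(n)}}$, and shift invariance comes from the self-similarity of the free product; your argument that geodesics from $x$ to $y\in C(x)$ stay in $C(x)$ (and geodesics from $o$ to $x^{-1}y$ stay in $V^\ast_{\delta(x^{-1}y)}$), so that the prefix shift is a length-preserving bijection between them, spells out what the paper condenses into ``by the symmetry of free products''.
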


\begin{proof}
Let be $n\in\N$ and consider $[X_0,\ldots,X_n]$. Then any  path from $X_0$ to $X_n$ has to pass through $X_{\e_1},\ldots,X_{\e_{\kk(n)}}$. Recall that $\pi_j$, $j\in\N$, is a path from $X_{\e_{j-1}}$ to $X_{\e_j}$ and $\pi_n^\ast$ is a path from $X_{\e_{\kk(n)}}$ to $X_n$.
Therefore, we have
\begin{eqnarray*}
f\bigl([X_0,\ldots,X_n]\bigr) &=& d_\mathcal{X}(X_0,X_n) =
\sum_{j=1}^{\kk(n)} d_\mathcal{X}(X_{\e_{j-1}},X_{\e_j}) + d_\mathcal{X}(X_{\e_{\kk(n)}},X_n)\\
&=& \sum_{j=1}^{\kk(n)} f(\pi_j ) + f(\pi_n^\ast),
\end{eqnarray*}
which proves cone additivity.
\par
For the proof of shift-invariance,  write $\pi_j=\bigl[U_0^{(j)},\ldots,U_{t_j}^{(j)}\bigr]$, $j\in\N$, and observe that all $U_i^{(j)}$, $i\in\{0,\ldots,t_j\}$, have common prefix $X_{\e_{j-1}}$. By the symmetry of free products, we get
$$
f(\pi_j)=d_{\mathcal{X}}\bigl(U_0^{(j)},U_{t_j}^{(j)}\bigr) = d_{\mathcal{X}}\Bigl(X_{\e_{j-1}}^{-1}U_0^{(j)},X_{\e_{j-1}}^{-1}U_{t_j}^{(j)}\Bigr)
= f\bigl(X_{\e_{j-1}}^{-1}\pi_j\bigr).
$$
\end{proof}
Since $f\bigl([X_0,\ldots,X_n]\bigr)\leq n$, Condition (\ref{cond: rest bounded}) does hold, and therefore there exists a constant $\mathfrak{c}_{\mathrm{drift}}>0$ such that
$$
\mathfrak{c}_{\mathrm{drift}}=
\lim_{n\to\infty} \frac{d_{\mathcal{X}}(o,X_n)}{n}=
\lim_{n\to\infty} \frac{f\bigl([X_0,\ldots,X_n]\bigr)}{n} \quad \textrm{ almost surely.}
$$
The number is called \textit{drift/rate of escape w.r.t. the graph metric}, and existence (including formulas) was initially shown in \cite{gilch:07}.

\subsubsection{Weights on Edges}
\label{subsub:weight-path-drift}

For $i\in\{1,2\}$, we denote the set of (oriented) edges of $\mathcal{X}_i$ by
$$
\mathcal{E}_i:=\bigl\{ [u,v]\,\bigl|\, u,v\in V_i \textrm{ with }p_i(u,v)>0\bigr\}.
$$
We assign now weights to all  edges in $\mathcal{E}_1\cup \mathcal{E}_2$ by the function
$$
W_0: \mathcal{E}_1\cup \mathcal{E}_2\to\mathbb{R}.
$$
For an edge $e=[u,v]\in \mathcal{E}_1\cup\mathcal{E}_2$, let  $\mathrm{start}(e):=u$ denote the vertex where $e$ starts. In this subsection we make the \textrm{assumption} that, for all sequences $(e_n)_{n\in\N}$ of edges in $\mathcal{E}_1\cup\mathcal{E}_2$ with  $d_{\mathcal{X}}\bigl(o,\mathrm{start}(e_n)\bigr)=n\in\N$, we have
\begin{equation}\label{eq: ass of weights}    
\limsup_{n\to\infty} \frac{\bigl|W_0(e_n)\bigr|}{\exp({\lambda n})} =0 \quad \textrm{ for all $\lambda>0$.}
\end{equation}
That is, the weight of an edge $e\in\mathcal{E}_1\cup\mathcal{E}_2$ may only increase sub-exponentially in the distance of the edge's starting point $\mathrm{start}(e)$ to $o$.
An example, where this assumption is satisfied, is the case $\bigl|W_0(e_n)\bigr|\leq n^d$ for some $d\in\mathbb{N}$.  Also, the assumption is trivially fulfilled if $V_1$ and $V_2$ are finite.
\par
Denote by 
$$
\mathcal{E}:=\bigl\{ [u,v]\,\bigl|\, u,v\in V \textrm{ with } p(u,v)>0\bigr\}
$$
the set of all edges in $\mathcal{X}$. Recall from the construction of $\mathcal{X}$ that each edge in $\mathcal{X}$ arises from exactly one edge in $\mathcal{E}_1\cup \mathcal{E}_2$.
We extend the assignment of weights to all edges in $\mathcal{X}$ by the function
$$
W: \mathcal{E} \to\mathbb{R}, e \mapsto W(e),
$$
where $W(e)=W_0(e_0)$ if the edge $e\in \mathcal{E}$ arises from the edge $e_0\in\mathcal{E}_1\cup \mathcal{E}_2$.
\par
We now define the function
$$
f_W:\Pi\to\mathbb{R}, \pi=[u_0,\ldots,u_m]\mapsto 
\sum_{i=1}^m W\bigl([u_{i-1},u_i]\bigr),
% OLD
%\sum_{e\in \mathcal{E}(\pi)} W(e),
$$
the sum over the weights of all  edges along the  path $\pi$. 

\begin{lemma}\label{lem: range cone add 2}
    The function $f_W$ is cone-additive.
\end{lemma}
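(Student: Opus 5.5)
The plan is to verify the two properties of Definition \ref{def: cone add} separately. Both rest on one structural fact: $f_W$ is additive under concatenation of paths and under removal of subpaths, because it is simply a sum over the edges traversed.

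\emph{Additivity.} Fix $n\in\N$ and consider the trajectory $[X_0,\ldots,X_n]$. Every step $[X_{i-1},X_i]$ is an edge of $\mathcal{X}$, and this edge belongs to exactly one copy of some $\mathcal{X}_j$ inside the tree-like structure of $\mathcal{X}$. I will argue that each such step is retained in exactly one of the pieces $\pi_1,\ldots,\pi_{\kk(n)},\pi_n^\ast$. This is the same edge-partition argument used in Lemma \ref{lem: edge-range cone add}, but now counted with multiplicity.

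The key points are as follows.
\begin{enumerate}[label=(\alph*)]
\item The cones $C(X_{\e_i})$ are nested, and the sets $\mathcal{S}_1,\ldots,\mathcal{S}_{\kk(n)}$ and $C(X_{\e_{\kk(n)}})$ overlap only in the single vertices $X_{\e_i}$.
\item An edge with both endpoints in a given piece's region belongs to that region. An edge cannot join vertices of two different regions unless it passes through a shared vertex $X_{\e_i}$. An edge incident to $X_{\e_i}$ belongs to $\mathcal{S}_i$ if its other endpoint lies outside $C(X_{\e_i})$, and to the deeper region otherwise.
\item The restriction operations $\pi|_{C(v)}$ and $\pi|_{\neg C(v)}$ delete exactly the excursions into the complementary region. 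They merge only a duplicated occurrence of $v$, so no edge is created or lost; hence the edges of the restricted path are precisely the steps of $\pi$ lying in the region, with multiplicity.
\end{enumerate}
Summing $W$ over the steps of the trajectory and regrouping by pieces then gives
\[
f_W\bigl([X_0,\ldots,X_n]\bigr)=\sum_{j=1}^{\kk(n)}f_W(\pi_j)+f_W(\pi_n^\ast)
\]
almost surely.

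\emph{Shift invariance.} The path $\pi_j$ lies in $C(X_{\e_{j-1}})$. By Corollary \ref{cor:shifted-paths}, its shift $X_{\e_{j-1}}^{-1}\pi_j$ is a well-defined path. The step $[u,v]$ with $u=X_{\e_{j-1}}u'$ and $v=X_{\e_{j-1}}v'$ comes from the same edge of $\mathcal{E}_1\cup\mathcal{E}_2$ as the step $[u',v']$. This holds because both are lifts of a single transition $p_i(a,b)$ of $\bar P_i$: the prefix only changes which copy of $\mathcal{X}_i$ is being used. Hence $W([u,v])=W([u',v'])$ by the definition of $W$, and summing over the steps yields $f_W(\pi_j)=f_W\bigl(X_{\e_{j-1}}^{-1}\pi_j\bigr)$.

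The main obstacle I expect is the bookkeeping in the additivity step, especially at the cone entry vertices $X_{\e_i}$. There one must check that the steps into and out of $C(X_{\e_i})$ are assigned to exactly one of $\pi_i$ and $\pi_{i+1}$ (or $\pi_n^\ast$), and that the elimination of double occurrences of $v$ in the restriction removes no real step. I would handle this by following one excursion out of, and back into, a cone and checking it against the definition of $\pi|_{C(v)}$, exactly as in the proof of Lemma \ref{lem:path-restriction-well-defined}.
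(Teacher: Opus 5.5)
Your proposal is correct and follows the paper's route: the pieces $\pi_1,\ldots,\pi_{\kk(n)},\pi_n^\ast$ partition the steps of $[X_0,\ldots,X_n]$ with multiplicity, so regrouping the weight sum gives additivity, and shift invariance holds because $W$ is lifted from $W_0$, so cancelling the prefix $X_{\e_{j-1}}$ does not change which edge of $\mathcal{E}_1\cup\mathcal{E}_2$ a step comes from. Your checks that the restriction operations neither create nor lose a step fill in what the paper asserts in one line. Since $\pi_j$ is built only from $[X_0,\ldots,X_{\e_j}]$, also state explicitly that every step lying in the region of $\mathcal{S}_j$ occurs by time $\e_j$, because afterwards the walk stays in $C(X_{\e_j})$.
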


\begin{proof}
Note that $\pi_1,\ldots,\pi_{\mathbf{k}(n)}, \pi_n^*$ define a partition of the edges in $[X_0,\ldots,X_n]$, that is,
$$
\mathcal{E}\bigl([X_0,\ldots,X_n]\bigr)=\mathcal{E}(\pi_1) \uplus \mathcal{E}(\pi_2) \uplus\ldots \uplus\mathcal{E}(\pi_{\mathbf{k}(n)})
\uplus \mathcal{E}(\pi_n^*).
$$
For $j,n\in \N$, write $\pi_j=[U_0^{(j)},\ldots,U_{t_j}^{(j)}]$ and $\pi^\ast_n=[U^\ast_{n,0},\ldots,U^\ast_{n,t^\ast}]$.
We now obtain the additivity property as follows:
\begin{eqnarray*}
f_W\bigl([X_0,\ldots,X_n]\bigr)&=&
\sum_{i=1}^n  W\bigl([X_{i-1},X_i]\bigr)\\
&=&
\sum_{j=1}^{\mathbf{k}(n)} \sum_{i=1}^{t_j} W\bigl([U_{i-1}^{(j)},U_i^{(t_j)}]\bigr) +
\sum_{i=1}^{t^\ast} W\bigl([U_{n,i-1}^\ast,U_{n,i}^\ast]\bigr) \\
&=&
\sum_{j=1}^{\mathbf{k}(n)} f_W(\pi_j) + f_W(\pi_n^*).
\end{eqnarray*}
Moreover, by the definition of $W$, 
the weight of an edge $e=[u,v]\in\mathcal{E}$, where $u,v\in C(w)$ for some $w\in V$, does not change if we cancel the common prefix $w$ out of $u$ and $v$, yielding $W(e)=W\bigl([w^{-1}u,w^{-1}v]\bigr)$. Therefore, for all $j\in\N$,
\begin{eqnarray*}
f_W\bigl(X^{-1}_{\e_{j-1}}\pi_j\bigr)&=&\sum_{i=1}^{t_j} W\bigl([X^{-1}_{\e_{j-1}}U_{i-1}^{(j)},X^{-1}_{\e_{j-1}}U_i^{(j)}]\bigr)\\
&=& \sum_{i=1}^{t_j} W\bigl([U_{i-1}^{(j)},U_i^{(j)}]\bigr)=f_W(\pi_j),
\end{eqnarray*}
which proves shift invariance.
\end{proof}
We obtain the next new limit theorem:

\begin{thm}\label{thm: weights}  
Assume that $\mathscr{R}>1$. Let $W$ be a weight function as above such that (\ref{eq: ass of weights}) is satisfied.
Then there exists a constant $\mathfrak{c}_{\textrm{weight}}$ such that
$$
\mathfrak{c}_{\textrm{weight}}=\lim_{n\to\infty} \frac{f_W\bigl([X_0,\ldots,X_n]\bigr)}{n} \quad \textrm{ almost surely.}
$$
\end{thm}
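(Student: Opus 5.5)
The plan is to apply Theorem~\ref{thm:limit-theorem} to $f_W$. Lemma~\ref{lem: range cone add 2} already gives cone-additivity, so only the growth condition (\ref{cond: rest bounded}) remains. We must find constants $C\in(0,\infty)$ and $R_f\in(0,\mathcal{R})$ such that $|f_W(\pi)|\le C R_f^{\len(\pi)}$ for all $\pi\in\Pi$.

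For a path $\pi=[u_0,\ldots,u_m]$ we have $|f_W(\pi)|\le \sum_{i=1}^m |W([u_{i-1},u_i])|$, and each such edge arises from an edge $e_0\in\mathcal{E}_1\cup\mathcal{E}_2$. The weight $W([u_{i-1},u_i])=W_0(e_0)$ is controlled by (\ref{eq: ass of weights}) through $d_{\mathcal X}(o,\mathrm{start}(e_0))$, where the distance is measured in the factor graph, i.e.\ in the copy of $\mathcal X_j$ attached at $o$. We set $M(n):=\sup\{|W_0(e)| : d_{\mathcal X}(o,\mathrm{start}(e))\le n\}$. The assumption (read for all sequences, hence for suprema) gives, for every $\lambda>0$, a constant $C_\lambda$ with $M(n)\le C_\lambda e^{\lambda n}$.

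There is one obstacle: for a general path $\pi$ in $\Pi$, the projected start points $\mathrm{start}(e_0)$ need not be close to $o$. If $\pi$ starts far out inside a factor copy, its length does not bound their distance. Condition (\ref{cond: rest bounded}) is required for all $\pi\in\Pi$, but the proof of Theorem~\ref{thm:limit-theorem} only uses it for $f(\pi_n^\ast)$ and for $f(\psi_k|_{\neg C(\W_k)})$, via Corollary~\ref{cor:expectation-psi2-restricted-finite} and Lemma~\ref{lem: end to 0}. These paths (and $\pi_1$) start at $o$ after shifting by $X_{\e_{j-1}}$. For a path starting at $o$, or at a cone root after cancelling the prefix, every vertex $u_i$ lies within distance $i\le m$ of the start. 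The projection of the edge $[u_{i-1},u_i]$ into its factor then starts at distance at most $m$ from the factor root, because the factor coordinate of $u_{i-1}$ relative to its copy is reached by a subpath of length $\le d(\text{start},u_{i-1})$.

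So I would run the argument of Theorem~\ref{thm:limit-theorem} with the growth bound only on paths starting at a cone root. For $\pi_n^\ast$ we first shift by $X_{\e_{\kk(n)}}$ using the shift invariance of $W$, so it starts at $o$. This gives
\[
|f_W(\pi)|\le m\,M(m)\le m\,C_\lambda e^{\lambda m}\le C' R_f^{m}
\]
with $R_f:=e^{2\lambda}$. Choosing $\lambda$ so small that $e^{2\lambda}<\mathcal{R}$ gives $R_f\in(1,\mathcal{R})$. With this bound, Corollary~\ref{cor:expectation-psi2-restricted-finite} and Lemmas~\ref{lem: end to 0}, \ref{lem: beginning to 0} and \ref{lem: to E[f(pi)]} go through verbatim, because they only evaluate $f$ on such rooted paths. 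The result is $\mathfrak{c}_{\mathrm{weight}}=\ell\cdot\E_\varrho[f_W(\psi_1|_{\neg C(\W_1)})]$.

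The main difficulty I expect is bookkeeping. One must check that every path fed into $f$ in Section~\ref{sec:proof-of-LLT} starts at $o$ or can be shifted to do so. One must also verify the claim that the factor-projected start of an edge of such a path has graph distance at most the path length from $o$, using the tree-like structure, since a path from $o$ to $xy$ with $y\in V_j$ in the attached copy must traverse $x$ and then reach $y$ within that copy. If one insists on applying Theorem~\ref{thm:limit-theorem} literally, the alternative is to redefine $f_W$ on unrooted paths by first shifting to the root of the deepest cone containing the path. That keeps the values on the $\pi_j$, $\pi_n^\ast$ unchanged, so additivity and shift invariance still hold.
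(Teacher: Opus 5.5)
Your main route is correct, and it is more careful than the paper's proof. The paper applies Theorem~\ref{thm:limit-theorem} as a black box and claims $|f_W(\pi)|\le C_\lambda\,\len(\pi)\,e^{\lambda\len(\pi)}$ for \emph{all} $\pi=[u_0,\ldots,u_m]\in\Pi$. That bound tacitly uses $d_{\mathcal X}(o,u_{i-1})\le\len(\pi)$, which holds only for paths starting at (or near) $o$.

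As you suspected, condition~(\ref{cond: rest bounded}) genuinely fails on $\Pi$ once $W_0$ is unbounded. Take $V_1=\mathbb Z$ with nearest-neighbour steps and $W_0([x,x\pm1])=|x|$. Then (\ref{eq: ass of weights}) holds, yet the one-edge paths $[x,x+1]$ have $f_W=|x|$, which is unbounded. So the paper's one-line argument is literally valid only for bounded weights, which is exactly the case the theorem is meant to go beyond.

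Your fix is exactly what is needed. You check that the proof of Theorem~\ref{thm:limit-theorem} evaluates the growth bound only on $\varphi|_{\neg C(g)}$ (Corollary~\ref{cor:expectation-psi2-restricted-finite}), which start at $o$, and on $\pi_n^\ast$ (Lemma~\ref{lem: end to 0}). You shift $\pi_n^\ast$ by $X_{\e_{\kk(n)}}$ using prefix invariance of $W$. You also use $d_{\mathcal X}(o,a)\le d_{\mathcal X}(o,wa)$ for the factor coordinate. The price is that you must reopen the proof of the main theorem rather than cite it.

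Your fallback does not work as stated. Shifting a path to the root of the deepest cone containing it need not make it start at $o$. For example, $[x,x+1]$ above has deepest cone $C(o)=V$, so nothing changes and (\ref{cond: rest bounded}) still fails. What does work is the following modification:
\begin{itemize}
\item Set $\tilde f(\pi):=f_W(\pi)$ if all vertices of $\pi=[u_0,\ldots,u_m]$ lie in $C(u_0)$, and $\tilde f(\pi):=0$ otherwise.
\item This $\tilde f$ agrees with $f_W$ on $[X_0,\ldots,X_n]$, on every $\pi_j$ and $X_{\e_{j-1}}^{-1}\pi_j$, and on $\pi_n^\ast$. Hence additivity and shift invariance are inherited.
\item For every $\pi$ one has $|\tilde f(\pi)|\le|f_W(u_0^{-1}\pi)|\le C'R_f^{\len(\pi)}$, so Theorem~\ref{thm:limit-theorem} then applies verbatim.
\end{itemize}

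Both you and the paper read (\ref{eq: ass of weights}) as giving a uniform $C_\lambda$. That requires $|W_0|$ to be bounded on edges starting at each fixed distance, including distance $0$. The limsup-over-sequences formulation does not literally give this (it only controls large distances), so your ``hence for suprema'' slightly overstates it. This is clearly the intended hypothesis, though.
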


\begin{proof}
We shall verify that Condition (\ref{cond: rest bounded}) of Theorem~\ref{thm:limit-theorem} is satisfied.
By Assumption (\ref{eq: ass of weights}) we get that, for every $\lambda>0$, there exists a constant $C_{\lambda}>0$  such that 
$$ 
\vert W(e)\vert \leq C_{\lambda}\cdot \exp(\lambda \cdot n) \quad \textrm{ for all $e\in \mathcal{E}$ with $d_{\mathcal{X}}\bigl(o,\mathrm{start}(e)\bigr)=n$.}
$$
Thus, for all $\lambda>0$ and $\pi=[u_0,\ldots,u_m]\in\Pi$,
\begin{eqnarray*}
\bigl| f_W(\pi)\bigr| &\leq & \sum_{i=1}^m \bigl|W([u_{i-1},u_i])\bigr| \leq C_\lambda \cdot \mathrm{length}(\pi)\cdot \exp\bigl(\lambda \cdot \mathrm{length}(\pi)\bigr) 
\end{eqnarray*}
Choosing $\lambda>0$ sufficiently small guarantees now that Condition (\ref{cond: rest bounded}) holds. Together with cone-additivity of $f_W$ we obtain the proposed theorem. 
\end{proof}
The constant $\mathfrak{c}_{\textrm{weight}}$ can be regarded as the average cost per single random walk step.

\subsubsection{Distances on Edges}

Consider again the weight assignment function $W:\mathcal{E}\to\mathbb{R}$ from Subsection \ref{subsub:weight-path-drift}, where we now assume that $W(e)\geq 0$ for all $e\in\mathcal{E}$. Weights are now interpreted as distances. For $u,v\in V$, set
$$
d_{\mathrm{weights}}(u,v):= \inf\bigl\{f_W(\pi) \,\bigl|\, \pi \textrm{ is a path from $u$ to $v$}\bigr\},
$$
the minimal distance/weight of a path from $u$ to $v$.
For $\pi\in\Pi$, define the function
$$
\ell_W:\Pi \to [0,\infty), [u_0,\ldots,u_m] \mapsto d_{\mathrm{weights}}(u_0,u_m).
$$
\begin{lemma}\label{lem:ellW-cone-additive}  
    $\ell_W$ is cone-additive.
\end{lemma}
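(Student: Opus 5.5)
We follow the pattern of the graph distance lemma above, replacing $d_{\mathcal{X}}$ by $d_{\mathrm{weights}}$. Two facts carry the argument. First, cut points make $d_{\mathrm{weights}}$ additive. Second, the weight function $W$ is invariant under prefix cancellation inside cones.

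For the additivity property, fix $n\in\N$ and consider $[X_0,\ldots,X_n]$. As observed in the entropy and graph-distance lemmas, every path from $X_0=o$ to $X_n$ must pass through the cut vertices $X_{\e_1},\ldots,X_{\e_{\kk(n)}}$, in this order. The reason is that $X_n\in C(X_{\e_{\kk(n)}})\subset\cdots\subset C(X_{\e_1})$, and a cone $C(x)$ can only be entered through $x$. We then show
$$
d_{\mathrm{weights}}(X_0,X_n)=\sum_{j=1}^{\kk(n)} d_{\mathrm{weights}}(X_{\e_{j-1}},X_{\e_j})+d_{\mathrm{weights}}(X_{\e_{\kk(n)}},X_n).
$$
The inequality ``$\le$'' holds because concatenating near-optimal paths between consecutive cut points gives a path from $X_0$ to $X_n$. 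Here $f_W$ is additive under concatenation, and we take the infimum over each piece. For ``$\ge$'', take any path $\pi$ from $X_0$ to $X_n$. For each $j$, let $\tau_j$ be the first visit of $\pi$ to $X_{\e_j}$. The times $\tau_j$ are increasing, since reaching $X_{\e_{j+1}}\in C(X_{\e_j})$ requires passing $X_{\e_j}$ first. Cutting $\pi$ at these times gives pieces joining consecutive cut points. Since $W\ge 0$ and $f_W$ is additive over concatenation, $f_W(\pi)$ is at least the sum of the infima, and we take the infimum over $\pi$. Since $\pi_j$ runs from $X_{\e_{j-1}}$ to $X_{\e_j}$ and $\pi^\ast_n$ runs from $X_{\e_{\kk(n)}}$ to $X_n$, this is exactly $\ell_W([X_0,\ldots,X_n])=\sum_j\ell_W(\pi_j)+\ell_W(\pi_n^\ast)$.

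For shift invariance, write $x:=X_{\e_{j-1}}$. The endpoints of $\pi_j$ are $x$ and $X_{\e_j}$, both in $C(x)$. We need $d_{\mathrm{weights}}(x,y)=d_{\mathrm{weights}}(o,x^{-1}y)$ for $y\in C(x)$. We first reduce to paths that stay inside $C(x)$. Any path from $x$ to $y$ that leaves $C(x)$ must return through $x$, so each excursion outside $C(x)$ is a closed loop at $x$. Deleting these excursions gives a path inside $C(x)$ from $x$ to $y$. Its weight is no larger, by nonnegativity of $W$ and the same loop-removal mechanism as in the restriction $\pi\bigl|_{C(x)}$ of Lemma~\ref{lem:path-restriction-well-defined}. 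So the infimum may be taken over paths inside $C(x)$.

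On such paths we apply the shift $\pi\mapsto x^{-1}\pi$. By Corollary~\ref{cor:shifted-paths} this is well defined, and by the shift invariance of $W$ used in Lemma~\ref{lem: range cone add 2} it preserves $f_W$. Its image is the set of paths from $o$ to $x^{-1}y$ lying in $V^\ast_{i}$, where $i=\delta(\mathbf{W}_j)$. Conversely, every path inside $V^\ast_i$ can be shifted back by $x$, since $\delta(x)\neq i$. Paths from $o$ to $x^{-1}y$ that leave $V^\ast_i$ do so only through loops at $o$, and the same nonnegativity argument lets us delete these loops. Hence the infima agree, which gives $\ell_W(\pi_j)=\ell_W(x^{-1}\pi_j)$.

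The main obstacle is this last step, namely handling paths that leave the cone, or leave $V^\ast_i$ after shifting. The nonnegativity $W\ge0$ is exactly what makes the loop deletion legitimate. It also justifies the ``$\ge$'' direction of additivity, since the first-visit decomposition may otherwise overlap with backtracking excursions.
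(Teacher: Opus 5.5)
Your proof is correct and follows the same route as the paper: additivity of $d_{\mathrm{weights}}$ across the cut vertices $X_{\e_1},\ldots,X_{\e_{\kk(n)}}$, then invariance of $d_{\mathrm{weights}}(X_{\e_{j-1}},X_{\e_j})$ under cancelling the prefix $X_{\e_{j-1}}$, where you add the details the paper leaves implicit, in particular the loop-deletion argument (using $W\ge 0$) behind the shift step. One small correction to your closing remark: the ``$\ge$'' direction of additivity does not need $W\ge 0$, because the first-visit cut times split the edges of $\pi$ exactly, so nonnegativity is genuinely used only in the shift-invariance step.
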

\begin{proof}
Let be $n\in\N$ and consider the random path $[X_0,\ldots,X_n]$. Then a path from $X_0=o$ to $X_n$ has to pass through $X_{\e_1},\ldots,X_{\e_{\kk(n)}}$. Therefore, we must have
\begin{eqnarray*}
\ell_W(\pi) &=& \sum_{j=1}^{\kk(n)} d_{\mathrm{weights}}\bigl(X_{\e_{j-1}},X_{\e_j}\bigr) + d_{\mathrm{weights}}\bigl(X_{\e_{\kk(n)}},X_n\bigr) \\
&=& \sum_{j=1}^{\kk(n)} \ell_W(\pi_j) + \ell_W(\pi_n^\ast),
\end{eqnarray*}
since $\pi_j$, $j\in\N$, is a path from $X_{\e_{j-1}}$ to $X_{\e_{j}}$ and $\pi_n^\ast$ is a path from $X_{\e_{\kk(n)}}$ to $X_n$. This proves cone-additivity.
\par
For the proof of shift-invariance, we remark that, due to the structure of the free product and the definition of $W_0$, we have for every $j\in\N$
$$
\ell_W(\pi_j)=d_{\mathrm{weights}}\bigl(X_{\e_{j-1}},X_{\e_j}\bigr) 
= d_{\mathrm{weights}}\bigl(o,X_{\e_{j-1}}^{-1}X_{\e_j}\bigr)
=\ell_W\bigl(X_{\e_{j-1}}^{-1}\pi_j\bigr).
$$
This proves shift invariance; hence, $\ell_W$ is cone-additive.
\end{proof}
Now we obtain the following generalisation of a result in \cite{gilch:07}:
\begin{thm}\label{thm:weight-distance}
   Assume that $\mathscr{R}>1$. Let $W$ be a non-negative weight function as above such that (\ref{eq: ass of weights}) is satisfied.
Then there exists a constant $\mathfrak{c}_{\textrm{weight-dist}}$ such that
$$
\mathfrak{c}_{\textrm{weight-dist}}=\lim_{n\to\infty} \frac{\ell_W\bigl([X_0,\ldots,X_n]\bigr)}{n} \quad \textrm{ almost surely.}
$$ 
\end{thm}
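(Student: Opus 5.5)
The plan is to apply Theorem~\ref{thm:limit-theorem} directly to $f=\ell_W$. Lemma~\ref{lem:ellW-cone-additive} already gives cone-additivity, and the radius-of-convergence assumption $\mathscr{R}>1$ is part of the hypothesis. So the only remaining task is to verify the growth condition (\ref{cond: rest bounded}), namely $|\ell_W(\pi)|\le C\cdot R_f^{\mathrm{length}(\pi)}$ for some $R_f\in(0,\mathcal{R})$ and all $\pi\in\Pi$.

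The key point is a comparison with the function $f_W$ from Subsection~\ref{subsub:weight-path-drift}. For $\pi=[u_0,\ldots,u_m]\in\Pi$, the path $\pi$ itself runs from $u_0$ to $u_m$, so by definition of $d_{\mathrm{weights}}$ as an infimum,
$$
0\le \ell_W(\pi)=d_{\mathrm{weights}}(u_0,u_m)\le f_W(\pi)=\sum_{i=1}^m W\bigl([u_{i-1},u_i]\bigr).
$$
Non-negativity of $W$ gives the lower bound. It also shows that the infimum is finite, since a path between the endpoints exists. Hence $|\ell_W(\pi)|\le |f_W(\pi)|$.

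Next I reuse the estimate from the proof of Theorem~\ref{thm: weights}. By Assumption (\ref{eq: ass of weights}), for every $\lambda>0$ we have
$$
|f_W(\pi)|\le C_\lambda\cdot \mathrm{length}(\pi)\cdot e^{\lambda\,\mathrm{length}(\pi)}.
$$
That estimate implicitly uses the fact that edges of $\pi$ start at distance at most $\mathrm{length}(\pi)$ from $o$. This holds for the paths $\pi_j$ after shifting, and for paths starting at $o$, which are the ones relevant in the proof of Theorem~\ref{thm:limit-theorem}. I will simply invoke the bound exactly as it was used there. Since $m\le e^{\lambda m}/\lambda$, this yields
$$
|\ell_W(\pi)|\le \frac{C_\lambda}{\lambda}\,e^{2\lambda\,\mathrm{length}(\pi)}.
$$
Choosing $\lambda>0$ with $e^{2\lambda}<\mathcal{R}$, which is possible because $\mathcal{R}>1$, gives (\ref{cond: rest bounded}) with $R_f:=e^{2\lambda}$.

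Theorem~\ref{thm:limit-theorem} then provides the almost sure limit $\mathfrak{c}_{\textrm{weight-dist}}$, with the formula from Corollary~\ref{cor:c-formula}. The argument has no real obstacle. The only subtle step is the growth bound for $f_W$, where edges far from $o$ could have large weights; this is handled exactly as in Theorem~\ref{thm: weights}, and domination by $f_W$ transfers the bound to $\ell_W$.
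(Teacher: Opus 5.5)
Your proposal is correct and is the paper's argument: cone-additivity comes from Lemma~\ref{lem:ellW-cone-additive}, and Condition~(\ref{cond: rest bounded}) follows from $0\le \ell_W(\pi)\le f_W(\pi)$ together with the growth estimate in the proof of Theorem~\ref{thm: weights}. Your caveat about paths starting far from $o$ is well taken, and the paper glosses over it; note that in Lemma~\ref{lem: end to 0} the bound is also applied to $\pi_n^\ast$, which starts at $X_{\e_{\kk(n)}}$ rather than at $o$, but since $f_W$ is unchanged when the common prefix $X_{\e_{\kk(n)}}$ is cancelled, the bound $\ell_W(\pi_n^\ast)\le f_W\bigl(X_{\e_{\kk(n)}}^{-1}\pi_n^\ast\bigr)$ reduces this case to a path starting at $o$ as well.
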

\begin{proof}
In view of Lemma \ref{lem:ellW-cone-additive}, it remains to show that Condition (\ref{cond: rest bounded}) in Theorem \ref{thm:limit-theorem} holds. But this follows from the proof of Theorem \ref{thm: weights} since, for every $\pi\in\Pi$,
$$
\ell_W(\pi) \leq f_W(\pi).
$$
\end{proof}
We remark that the above theorem does not assume that the weights on the edges are bounded. In the special case of  $V_1$ and $V_2$ being finite or if $W_0$ is bounded, the above theorem was proven by Gilch \cite{gilch:07}. Hence,  Theorem \ref{thm:weight-distance} generalises that result.

\subsection{Lamplighter Distance/Travelling Salesman}

We consider a lamp\-lighter random walk on the free product $V$ where a lamp sits at each vertex of $V$, which can have the states ``on'' (or $1$) or ``off'' (or $0$). A lamplighter performs now a random walk on $V$, where the lamplighter can switch on or off the lamp at the currently visited vertex.
\par
For this purpose, consider the set of all finitely supported lamp configurations on the vertices given by 
$$
\conf:=\bigl\{\phi: V\longrightarrow \{0,1\} \,\bigl|\,  \vert \supp(\phi)\vert <\infty\bigr\}, 
$$
where $\phi(x)=1$ means that the lamp at $x\in V$ is on and $\phi(x)=0$ means that the lamp at $x$ is off. 
\par
Let be $\beta\in (0,1)$. 
The \textit{lamplighter random walk} is now given by the sequence of random variables
$$
\bigl((X_n,\Phi_n)\bigr)_{n\in\N_0},
$$
where $X_n$ represents the lamplighter's position  and $\Phi_n$ the random lamp configuration at time $n$, which has
the following single-step transition probabilities on the state space $V\times\conf$:
$$
p_{\mathrm{LL}}\bigl((x_1,\phi_1),(x_2,\phi_2)\bigr):= \begin{cases}
p(x_1,x_2)\cdot \beta, & \textrm{if } \mathrm{supp}(\phi_2-\phi_{1})=\{x_2\},\\
p(x_1,x_2)\cdot (1-\beta), & \textrm{if } \phi_1=\phi_2,\\
0, & \textrm{otherwise,}
\end{cases}
$$
where $(x_1,\phi_1),(x_2,\phi_2)\in V\times \conf$.
In other words, the lamplighter random walk can walk in one step from $x_1$ to $x_2$ with probability $p(x_1,x_2)$ and at $x_2$ the lamplighter may change the lamp state with probability $\beta$ or keep the lamp state there. These two acts are seen as one single step in the random walk process. Note that lamp changes are only allowed when arriving at some site. This random walk is also called \textit{walk-switch lamplighter random walk}. Initially, we set $X_0:=o$ and $\Phi:=\mathbf{0}$, the constant zero function on $V$. We refer to \cite[pp. 72]{gilch:diss} for more details in the case of lamplighter random walks on free products of groups.
\par
For $\pi=[u_0,\ldots,u_n]\in\Pi$, $n\in\N$, define
$$
\mathrm{Conf}(\pi):=\bigl\{ \phi:V\to  \{0,1\} \,\bigl|\, \mathrm{supp}(\phi)\subseteq \{u_0,\ldots,u_n\}\bigr\},
$$ 
the set of all possible configurations along the path $\pi$.
Set 
$$
\mathcal{U}:=\bigl\{ (\pi,\phi) \,\bigl|\, \pi\in\Pi, \phi\in \mathrm{Conf}(\pi)\bigr\}.
$$
For $(x,\phi)\in\mathcal{U}$,
we are interested in the length of a shortest path from $o$ to $x$ which visits all vertices whose lamps are ``on" w.r.t. $\phi$, that is, a shortest path from $o$ to $x$ which visits all elements in $\mathrm{supp}(\phi)$. This can be regarded as a \textit{travelling salesman problem}.
For this purpose, we define the lamplighter distance
$$
\ell_{\mathrm{LL}}: \mathcal{U} \to \N_0, (\pi,\phi)\mapsto \ell_{\mathrm{LL}}(\pi,\phi),
$$
where, for $\pi=[u_0,\ldots,u_n]\in\Pi$ and $\varphi\in \mathrm{Conf}(\pi)$,
$$
\ell_{\mathrm{LL}}(\pi, \phi):= \min\left\{\mathrm{length}(\widetilde{\pi}) \,\Biggl| \, \begin{array}{c}
\widetilde\pi\in\Pi \textrm{ is a path from $u_0$ to $u_n$},\\
\textrm{ which visits all elements in $\supp(\phi)$}
\end{array}
\right\}.
$$
Furthermore, for $v\in V\setminus\{o\}$, $\pi=[u_0,\ldots,u_n]\in \Pi$ with $\{u_0,\ldots,u_n\}\subseteq C(v)$ and $\phi\in \mathrm{Conf}(\pi)$, we define the shifted configuration
$$
v^{-1}\phi: V\to \{0,1\}, x \mapsto \bigl(v^{-1}\phi\bigr)(x):=\begin{cases}
1, & \textrm{if $vx$  well-defined $\&$ }  \phi(vx)=1,\\
0, & \textrm{otherwise}.
\end{cases}
$$
That is, $v^{-1}\phi$ arises from $\phi$ by deleting the vertex $v$ at each $w\in V$ where the lamp is on.
Analogously to the path pieces $(\pi_j)_{j\in\N}$ in Section~\ref{subsec: paths}, we consider the corresponding lamp configuration along these pieces, with the only difference that at the end vertex of $\pi_j$ we may omit the lamp state, since it does not affect the value of the function $\ell_{\mathrm{LL}}$. Recall that $\Phi_n$ denotes the random lamp configuration at time $n$.
For $j\in\N$, we define 
$$
\eta_j := \Phi_{\e_j}\cdot \mathds{1}_{C(X_{\e_{j-1}})\setminus C(X_{\e_j})}
$$
and for $n\in\N$,
$$
\eta_n^\ast := \Phi_{n}\cdot \mathds{1}_{C\bigl(X_{\e_{\kk(n)}}\bigr)}.
$$
Observe that the lamp configurations in $C(X_{\e_{j-1}})\setminus C(X_{\e_j})$ do not change any more after time $\e_j$. With this notation, we have
$$
\mathrm{supp}(\eta_1)\uplus \mathrm{supp}(\eta_2)\uplus \ldots \uplus \mathrm{supp}(\eta_{\kk(n)})\uplus \mathrm{supp}(\eta_n^\ast)= \mathrm{supp}(\Phi_n).
$$
The following lemma shows that $\ell_{\mathrm{LL}}$ satisfies some generalized cone-additivity properties.
\begin{lemma}\label{lem: LL gen add}
  % The function $f$ satisfies the following properties:
    \begin{enumerate}[label=(\roman*)]
    \item For all $n\in\N$, we have
    $$
    \ell_{\mathrm{LL}}\bigl([X_0,\ldots, X_n],\Phi_n\bigr)
= \sum_{j=1}^{\kk(n)} \ell_{\mathrm{LL}}\bigl(\pi_j,\eta_j\bigr)+ \ell_{\mathrm{LL}}\bigl(\pi^\ast_n,\eta^\ast_n\bigr)\quad \textrm{ almost surely.}
    $$
\item For all $v\in V\setminus\{o\}$, $\pi=[u_0,\ldots,u_n]\in \Pi$ with $\{u_0,\ldots,u_n\}\subseteq C(v)$ and $\phi\in \mathrm{Conf}(\pi)$, 
    $$
\ell_{\mathrm{LL}}\bigl(\pi,\phi\bigr)=\ell_{\mathrm{LL}}\bigl(v^{-1}\pi,v^{-1}\phi\bigr).
    $$ 
    \end{enumerate}
\end{lemma}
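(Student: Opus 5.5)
For (i), the plan is to use the cut-vertex structure of the free product. Every path from $X_0=o$ to $X_n$ must pass through $X_{\e_1},\ldots,X_{\e_{\kk(n)}}$ in that order. Moreover, a path from $X_{\e_{j-1}}$ to $X_{\e_j}$ that visits the on-lamps lying in $\mathcal{S}_j$ never needs to enter $C(X_{\e_j})\setminus\{X_{\e_j}\}$. I would prove the two inequalities separately.

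For ``$\le$'', take for each $j$ an optimal path $\widetilde\pi_j$ realising $\ell_{\mathrm{LL}}(\pi_j,\eta_j)$, and an optimal path $\widetilde\pi^\ast$ realising $\ell_{\mathrm{LL}}(\pi^\ast_n,\eta^\ast_n)$. Concatenating them gives a path from $o$ to $X_n$. Since $\mathrm{supp}(\Phi_n)$ is the disjoint union of the supports of the $\eta_j$ and of $\eta^\ast_n$, this path visits every on-lamp. Hence its length bounds $\ell_{\mathrm{LL}}([X_0,\ldots,X_n],\Phi_n)$ from above.

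For ``$\ge$'', take an optimal path $\widetilde\pi$ from $o$ to $X_n$ visiting $\mathrm{supp}(\Phi_n)$. I would use the restriction operations from Lemma~\ref{lem:path-restriction-well-defined}. Let $t_j$ be the last visit of $\widetilde\pi$ to $X_{\e_j}$ at or before its first entry into $C(X_{\e_j})$ after having covered $\mathcal{S}_j$. A simpler route is to restrict directly: set
$$
\widetilde\pi^{(j)}:=\bigl(\widetilde\pi|_{C(X_{\e_{j-1}})}\bigr)\big|_{\neg C(X_{\e_j})},\qquad \widetilde\pi^{(\ast)}:=\widetilde\pi|_{C(X_{\e_{\kk(n)}})}.
$$
Each of these is a well-defined path (Lemma~\ref{lem:path-restriction-well-defined}). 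Each runs from $X_{\e_{j-1}}$ to $X_{\e_j}$ (respectively from $X_{\e_{\kk(n)}}$ to $X_n$) and visits every on-lamp of $\eta_j$ (respectively $\eta^\ast_n$), so its length is at least $\ell_{\mathrm{LL}}(\pi_j,\eta_j)$ (respectively $\ell_{\mathrm{LL}}(\pi^\ast_n,\eta^\ast_n)$). The key counting claim is that every edge of $\widetilde\pi$ survives in exactly one of these restrictions. This holds because each edge of $\mathcal{X}$ lies in exactly one sphere $\mathcal{S}_j$ or inside the final cone. The restriction only deletes the duplicated cut vertices, which are not edges, so it removes no length. This is the same partition-of-edges fact used in Lemma~\ref{lem: range cone add 2}. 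Summing the lengths gives ``$\ge$''.

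There is a technical point to verify here: the endpoints must be correct. Since $\widetilde\pi$ ends at $X_n\in C(X_{\e_{\kk(n)}})$, the restriction to the exterior of $C(X_{\e_j})$ ends at $X_{\e_j}$ by the convention for exits. Likewise, the restriction to $C(X_{\e_{j-1}})$ starts at $X_{\e_{j-1}}$.

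For (ii), I would use Corollary~\ref{cor:shifted-paths} together with Lemma~\ref{lem:cone-probs}: $\widetilde\pi\mapsto v^{-1}\widetilde\pi$ is a length-preserving bijection between paths inside $C(v)$ and paths inside $C(o)$ starting with letters of the appropriate factor. It also maps visited on-lamps of $\phi$ onto those of $v^{-1}\phi$. An optimal path from $u_0$ to $u_n$ never needs to leave $C(v)$: any excursion outside $C(v)$ starts and ends at $v$ and contains no on-lamps, so it can be cut out, which only shortens the path. Likewise, an optimal path for the shifted problem stays in the image region, since excursions through $o$ into the other factor can be cut out the same way. Hence the two minima coincide.

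The main obstacle I expect is the ``$\ge$'' direction in (i), specifically checking rigorously that the restricted paths partition the length of $\widetilde\pi$ exactly. A first fallback is to avoid exact partitioning and only use the inequality $\sum_j \mathrm{length}(\widetilde\pi^{(j)})+\mathrm{length}(\widetilde\pi^{(\ast)})\le \mathrm{length}(\widetilde\pi)$, which suffices. That inequality follows since the restrictions select disjoint sets of edge-traversals of $\widetilde\pi$.
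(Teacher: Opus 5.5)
Your proposal is correct and follows essentially the same route as the paper: for (i), the cut-vertex decomposition through $X_{\e_1},\ldots,X_{\e_{\kk(n)}}$ with an optimal tour inside each sphere; for (ii), the shift symmetry of cones. Your two-inequality argument (concatenating optimal pieces for $\le$; restricting an optimal global path via Lemma~\ref{lem:path-restriction-well-defined} and using the edge-partition for $\ge$) makes rigorous what the paper only sketches.
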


\begin{proof}
The value $f\bigl([X_0,\ldots, X_n],\Phi_n\bigr)$ only depends on $X_0, X_n$ and $\Phi_n$. Any path from $X_0$ to $X_n$ has to pass through $X_{\e_1},\ldots,X_{\e_{\kk(n)}}$.
Moreover, a shortest path from $X_{\e_{j-1}}$ to $X_{\e_j}$ visiting all lamps in $\mathrm{supp}(\eta_j)$ will stay in the sphere $\mathcal{S}_j$, because any detour to another sphere would not give rise to a shorter path. Vice versa, each element $x\in\mathrm{supp}(\eta_i)$, $i\neq j$, can be taken into account by considering the corresponding shortest path from $X_{\e_{i-1}}$ to $X_{\e_i}$ visiting $x$.
\par
Therefore, since the length of a path is additive, we obtain $(i)$ by
\begin{eqnarray*}
&&\ell_{\mathrm{LL}}\bigl([X_0,\ldots,X_n],\Phi_n\bigr)\\
&= & \sum_{j=1}^{\kk(n)} \ell_{\mathrm{LL}}\Bigl(\pi_j, \Phi_{\e_j} \cdot  \mathds{1}_{C(X_{\e_{j-1}})\setminus C(X_{\e_j})}\Bigr)
 +
 \ell_{\mathrm{LL}}\Bigl(\pi_n^\ast, \Phi_n \cdot  \mathds{1}_{C(X_{\e_{\kk(n)}})}\Bigr)\\
 &=& \sum_{j=1}^{\kk(n)} \ell_{\mathrm{LL}}\bigl(\pi_j, \eta_j\bigr)
 +
 \ell_{\mathrm{LL}}\bigl(\pi_n^\ast, \eta_n^\ast\bigr).
\end{eqnarray*}
Property $(ii)$ follows by symmetry of the graph. The configuration does not change and is just pulled back to the corresponding path shifted by $v^{-1}$. The length of a shortest path from $v_0$ to $v_n$ visiting all elements in $\supp(\phi)$ is the same as the length of a shortest path from $v^{-1}v_0$ to $v^{-1}v_n$ visiting all elements in $\supp(v^{-1}\phi)$.
\end{proof}
Now we can prove that the rate of escape of the lamplighter random walk exists and is strictly positive:
\begin{thm}\label{thm:LL}
There exists a real number $\mathfrak{c}_{LL}\in(0,\infty)$ such that
$$
\mathfrak{c}_{LL}=\lim_{n\to\infty} \frac1n \ell_{\mathrm{LL}}\bigl([X_0,\ldots,X_n],\Phi_n\bigr) \quad \textrm{almost surely.}
$$
\end{thm}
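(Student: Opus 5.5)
We cannot apply Theorem \ref{thm:limit-theorem} verbatim, because $\ell_{\mathrm{LL}}$ is defined on pairs $(\pi,\phi)$ and the lamp configurations are extra randomness. The plan is therefore to rerun the argument of Section \ref{sec:proof-of-LLT} on an enlarged Markov chain that also records the lamps inside each sphere. Lemma \ref{lem: LL gen add}(i) gives the decomposition
$$
\ell_{\mathrm{LL}}\bigl([X_0,\ldots,X_n],\Phi_n\bigr)=\ell_{\mathrm{LL}}(\pi_1,\eta_1)+\sum_{j=2}^{\kk(n)}\ell_{\mathrm{LL}}(\pi_j,\eta_j)+\ell_{\mathrm{LL}}(\pi_n^\ast,\eta_n^\ast).
$$
Lemma \ref{lem: LL gen add}(ii) turns each middle term into $\ell_{\mathrm{LL}}\bigl(\psi_j|_{\neg C(\W_j)},X_{\e_{j-1}}^{-1}\eta_j\bigr)$.

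\textbf{Step 1: the enlarged chain.} Set $\theta_j:=X_{\e_{j-1}}^{-1}\Phi_{\e_j}$, restricted to the vertex set of $\psi_j$. Since lamps can only be switched at the currently visited vertex, $\theta_j$ is supported on the vertices of $\psi_j$. I will show that $\bigl((\W_j,\psi_j,\theta_j)\bigr)_{j\in\N}$ is a homogeneous, irreducible, positive-recurrent Markov chain.

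\emph{Markov property and homogeneity.} I repeat the computation leading to (\ref{eq: cond prob expressed with Xi}), now summing over lamp-switch decisions along the path pieces. Given the walk's trajectory, the switches are independent Bernoulli$(\beta)$ choices at each arrival. So the conditional law of the lamp states inside $C(X_{\e_{j-1}})$, given the past, depends only on the shifted path pieces. The one subtlety is that lamps on $\psi_j$ inside $C(\W_j)\setminus\{\W_j\}$ may be altered again after $\e_j$. For this reason I record in $\theta_j$ all lamps on $\psi_j$ at time $\e_j$, and the transition to $\theta_{j+1}$ overwrites exactly the part inside $C(\W_j)$. That overwrite is determined by $\psi_{j+1}$ and fresh coin flips.

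\emph{Irreducibility.} I follow the proof of Proposition \ref{prop: irred MC}: go outward to a state $(g,[o,g],\cdot)$ with any lamp value at $g$, then follow $g\varphi_1$ with prescribed switch decisions. Every finite switch pattern has probability at least $\min(\beta,1-\beta)^{\len}>0$.

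\emph{Positive recurrence.} I copy the proof of Proposition \ref{prop:positive-recurrence}. The bound $\len(\psi_k)\le M+1$ still leaves only finitely many lamp patterns per path. The chain of $M$ outward steps reaches $(g,[o,g],\text{fixed lamp})$ with probability at least $\epsilon\cdot\min(\beta,1-\beta)^M$.

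\textbf{Step 2: integrability and boundary terms.} The key bound is the trivial estimate
$$
\ell_{\mathrm{LL}}(\pi,\phi)\le 2\,\mathrm{length}(\pi),
$$
because traversing $\pi$ and backtracking to its endpoint visits all of $\supp(\phi)\subseteq\pi$. Here I use that each edge in $\mathcal X$ used by $\pi$ can be reversed through a bounded-length cycle; if that fails in the directed setting, I instead bound by $\mathrm{length}(\pi)$ itself, since $\pi$ already visits everything from $u_0$ to $u_n$. The latter bound suffices.

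With this bound, Proposition \ref{prop:E-varrho-finite} gives integrability with respect to the new invariant measure, whose $\psi$-marginal is $\varrho$. The term $\ell_{\mathrm{LL}}(\pi_1,\eta_1)/n\to0$ as in Lemma \ref{lem: beginning to 0}. The term $\ell_{\mathrm{LL}}(\pi_n^\ast,\eta_n^\ast)/n\le \len(\psi_{\kk(n)+1})/n\to0$ as in Lemma \ref{lem: end to 0}.

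\textbf{Step 3: the limit and its positivity.} The ergodic theorem for the enlarged chain, combined with Lemma \ref{lem: to E[T_2-T_1]}, gives the limit
$$
\mathfrak c_{LL}=\ell\cdot\E\bigl[\ell_{\mathrm{LL}}(\psi_1|_{\neg C(\W_1)},\theta_1)\bigr],
$$
with the expectation taken under the invariant law. This quantity is finite. For positivity, $\ell_{\mathrm{LL}}(\pi_j,\eta_j)\ge d_{\mathcal X}(X_{\e_{j-1}},X_{\e_j})\ge1$, so $\mathfrak c_{LL}\ge\mathfrak c_{\mathrm{drift}}>0$. Alternatively, this follows directly since $\ell>0$.

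\textbf{Main obstacle.} I expect the hardest part to be Step 1's Markov property. The lamps inside the future cone $C(\W_j)$ are not yet final at time $\e_j$. The state must either carry them, as planned, or discard them and use only $\eta_j$, whose lamps are final. I would first try the cleaner option of using $\eta_j$ and an enlarged state that includes the next $\psi_{j+1}$, as in the pair chain of Theorem \ref{thm:r-range}.
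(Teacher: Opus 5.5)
Your proposal is correct and is essentially the paper's argument. Your $\theta_j$ coincides with the paper's $\zeta_k$, and the paper likewise transfers the Markov, irreducibility and positive-recurrence arguments to $\bigl((\W_k,\psi_k,\zeta_k)\bigr)_{k\in\N}$, uses $\ell_{\mathrm{LL}}(\pi,\phi)\le\len(\pi)$ for integrability and the boundary terms, and concludes with the ergodic theorem and $n/\kk(n)\to\E_\sigma[\e_2-\e_1]$. One correction: in Step 3 the summand should read $\ell_{\mathrm{LL}}\bigl(\psi_1|_{\neg C(\W_1)},\theta_1\cdot\mathds{1}_{C(\W_1)^c}\bigr)$, because $\theta_1$ still carries the non-final lamps inside $C(\W_1)$; your positivity argument via $\ell_{\mathrm{LL}}\ge d_{\mathcal{X}}$ spells out what the paper leaves implicit.
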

\begin{proof}
Define
$$
\zeta_1:V\to \{0,1\}, x \mapsto \Phi_{\e_1}(x)\cdot \mathds{1}_{C_{\delta(X_{\e_1})}}(x).
$$
For $k\geq 2$, define
$$
\zeta_k:V\to \{0,1\}, x \mapsto \Bigl(X_{\e_{k-1}}^{-1}\bigl(\Phi_{\e_k}\cdot \mathds{1}_{C(X_{\e_{k-1}})}\bigr)\Bigr)(x),
$$
that is, $\zeta_k(x)=1$ if and only if $X_{\e_{k-1}}x$ is well-defined and $\Phi_{\e_k}(X_{\e_{k-1}}x)=1$.
With this notation, we have for all $k\geq 2$
$$
X_{\e_{k-1}}^{-1}\eta_k=\zeta_k \cdot \mathds{1}_{C(\W_k)^c}
$$
and
\begin{equation}\label{equ:ellLL-decomposition}
\ell_{\mathrm{LL}}(\pi_k,\eta_k)=\ell_{\mathrm{LL}}\bigl( X_{\e_{k-1}}^{-1} \pi_k,X_{\e_{k-1}}^{-1}\eta_k\bigr)
= \ell_{\mathrm{LL}}\Bigl(\psi_k\bigl|_{\neg C(\W_k)},\zeta_k\cdot \mathds{1}_{C(\W_k)^c}\Bigr).
\end{equation}
Now one can prove completely analogously to Lemma \ref{lem:support-D} and Propositions \ref{prop: irred MC}, \ref{prop:positive-recurrence} 
that $\bigl((\W_k,\psi_k,\zeta_k)\bigr)_{k\in\N}$ forms a homogeneous, irreducible, positive-recurrent Markov chain on the state space $\mathcal{D}_{\mathrm{LL}}:=\mathrm{\supp}\bigl((\W_1,\psi_1,\zeta_1)\bigr)$ with invariant distribution $\varrho_{LL}$. Analogously to the proof of Proposition \ref{prop:positive-recurrence} one can show that $(g,[0,g],\mathbf{0})$, $g\in V_1$, is positive-recurrent, where $\mathbf{0}$ represents the zero function.
\par
Equation (\ref{equ:f-decomposition}) becomes in this setting
\begin{eqnarray*}
&&\ell_{\mathrm{LL}}\bigl([X_0,\ldots,X_n],\Phi_n\bigr)\\
&=& \ell_{\mathrm{LL}}(\pi_1,\eta_1) + \sum_{i=2}^{\kk(n)} \ell_{\mathrm{LL}}\Bigl(\psi_i\bigl|_{\neg C(\W_i)},\zeta_i\cdot \mathds{1}_{C(\W_i)^c}\Bigr) + \ell_{\mathrm{LL}}\bigl(\pi^\ast_n,\eta_n^\ast\bigr),
\end{eqnarray*}
by Lemma~\ref{lem: LL gen add} and Equation (\ref{equ:ellLL-decomposition}).
Completely analogously to Lemmas \ref{lem: beginning to 0} and \ref{lem: end to 0}, one can show that 
$$
\frac1n{}\ell_{\mathrm{LL}}(\pi_1,\eta_1)\to 0, \quad \frac1n  \ell_{\mathrm{LL}}\bigl(\pi^\ast_n,\eta_n^\ast\bigr)\to 0 \quad \textrm{ almost surely,}
$$
since $\ell_{\mathrm{LL}}(\pi,\phi)\leq \mathrm{length}(\pi)$ for all $(\pi,\phi)\in\mathcal{U}$, which in turn implies existence of $C\in(0,\infty)$ and $R\in(0,\mathcal{R})$ such that
$$
\bigl|\ell_{\mathrm{LL}}(\pi,\phi)\bigr| \leq C\cdot R^{\mathrm{length}(\pi)} \quad \textrm{ for all } (\pi,\phi)\in\mathcal{U}.
$$
This yields an analogous version of Corollary \ref{cor:expectation-psi2-restricted-finite}:
\begin{eqnarray*}
&&\mathbb{E}_{\varrho_{LL}}\Bigl[\ell_{\mathrm{LL}}\bigl(\psi_1\bigl|_{\neg C(\W_1)},\zeta_1\cdot \mathds{1}_{C(\W_1)^c}\bigr)\Bigr]\\
&:=&
\sum_{(g,\pi,\phi)\in\mathcal{D}_{\mathrm{LL}}} \varrho_{LL}(g,\pi,\phi) \cdot \ell_{\mathrm{LL}}\Bigl(\pi\bigl|_{\neg C(g)},\phi\cdot \mathds{1}_{C(g)^c}\Bigr) <\infty.
\end{eqnarray*}
Once again, completely analogously to the proof of Lemma \ref{lem: to E[f(pi)]} one can show that
\begin{eqnarray*}
&&\lim_{n\to\infty} \frac1{\kk(n)}\sum_{i=2}^{\kk(n)}\ell_{\mathrm{LL}}\Bigl(\psi_i\bigl|_{\neg C(\W_i)},\zeta_i\cdot \mathds{1}_{C(\W_i)^c}\Bigr) \\
&=&
\mathbb{E}_{\varrho_{LL}}\Bigl[\ell_{\mathrm{LL}}\Bigl(\psi_1\bigl|_{\neg C(\W_1)},\zeta_1\cdot \mathds{1}_{C(\W_1)^c}\Bigr)\Bigr] \quad \textrm{almost surely,}
\end{eqnarray*}
from which we finally obtain the almost sure convergence 
$$
\lim_{n\to\infty} \frac1n \ell_{\mathrm{LL}}\bigl([X_0,\ldots,X_n],\Phi_n\bigr) =
\frac{\mathbb{E}_{\varrho_{LL}}\Bigl[\ell_{\mathrm{LL}}\Bigl(\psi_1\bigl|_{\neg C(\W_1)},\zeta_1\cdot \mathds{1}_{C(\W_1)^c}\Bigr)\Bigr]}{\mathbb{E}_\sigma[\e_2-\e_1]}.
$$
This proves the claim.
\end{proof}

\iffalse
\subsection{Lamplighter on edges}
In order to apply Theorem~\ref{thm:limit-theorem} to the above-defined function $f$, we need to define a graph structure, which includes the states of the lamps. We shall do it as follows. Note that this is not quite a model for the lamplighter, but it captures all the information we need in order to read off the values of the function $f$.
We "blow up" $V_i$ by doubling all vertices besides the root: 
\begin{center}$x\in V_i^\times$ gives $x_{\on}$ and $x_{\off}$ in our new graph $\widetilde{V_i}$.
 \end{center}
If there is an edge $[x,y]$ in $V_i$, then there shall be edges $[x_{\on},y_{\off}]$, $[x_{\on},y_{\on}]$, $[x_{\off},y_{\off}]$, $[x_{\off},y_{\on}]$ in $\widetilde{V_i}$. If $x=\id_i$, then the above shall hold for $x=x_{\off}=x_{\on}$ and identifying double edges. The same if $y=\id_i$.
Now, we consider $$\widetilde{V}:=\widetilde{V_1}*\widetilde{V_2}.$$
We interpret a path $[x_1\ldots x_n]$ in $V$ with configuration $f_i\in\mathbb{Z}/2\mathbb{Z}$ above $x_i$ as follows:
\fi

\end{document}